\theoremstyle{plain}
\newtheorem{thm}{Theorem}[section]
\newtheorem{cor}[thm]{Corollary}
\newtheorem{lem}[thm]{Lemma}
\newtheorem{prop}[thm]{Proposition}
\theoremstyle{definition}
\newtheorem{defn}[thm]{Definition}
\theoremstyle{remark}
\newtheorem{rem}[thm]{Remark}
\numberwithin{equation}{section}
\begin{document}

\title[Quasi-periodic BVPs for the Helmholtz equation]{The Functional Analytic Approach for Quasi-periodic Boundary Value Problems for the Helmholtz equation}%
\author{Roberto Bramati}%
\author{Matteo Dalla Riva}%
\author{Paolo Luzzini}%
\author{Paolo Musolino}%

\address{Roberto Bramati,  Department of Mathematics: Analysis, Logic and Discrete Mathematics Ghent University, Belgium {\tt roberto.bramati@ugent.be}}
\address{Paolo Luzzini, Dipartimento di  Matematica `Tullio Levi-Civita', Universit\`a degli Studi di Padova, Italy {\tt pluzzini@math.unipd.it}}%
\address{Paolo Musolino, Dipartimento di Scienze Molecolari e Nanosistemi, Universit\`a Ca’ Foscari Venezia, via Torino 155,
30172 Venezia Mestre, Italy {\tt paolo.musolino@unive.it }}%
\address{Matteo Dalla Riva, Dipartimento di Ingegneria, Universit\`a degli Studi di Palermo, Viale delle Scienze, Ed. 8, 90128 Palermo, Italy {\tt matteo.dallariva@unipa.it}.}


\maketitle

\date{\today}%
\begin{abstract} 
We lay down the preliminary work to apply  the Functional Analytic Approach to quasi-periodic boundary value problems for the Helmholtz equation. This consists in introducing a quasi-periodic fundamental solution and the related layer potentials,  showing how they are used to construct the solutions of quasi-periodic boundary value problems, and how they behave when we perform a singular perturbation of the domain. To show an application, we study a nonlinear quasi-periodic Robin problem in a domain with a set of holes that shrink to points.
\end{abstract}

\bigskip
\noindent

{\it Key words:}  quasi-periodic boundary value problems, Helmholtz equation, integral equations, potential theory, singularly perturbed domains

\bigskip
\noindent
{\bf 2020 Mathematics Subject Classification:} 35J05, 35B25, 35J25, 31A10, 31B10, 47A30.

\setcounter{page}{1}
\tableofcontents

\section{Introduction}

Boundary value problems in singularly perturbed domains are widely studied in mathematics for their importance. A typical example consists of a boundary value problem on a smooth domain  with a hole of size $\epsilon>0$. When $\epsilon$ tends to $0$  the hole shrinks down to a point and a singularity appears on the boundary. Problems of this kind are relevant for the applications that they find in continuum mechanics,  applied sciences, and engineering. For instance, they play a central role in the analysis of  shape and topological optimization problems and of inverse problems related to nondestructive testing techniques. For a comprehensive description of these applications  we refer to the monographs of  Novotny and Soko\l owski \cite{NoSo13} and of Ammari and Kang \cite{AmKa07}.

Probably, the most common approach to  singular domain perturbation problems is that of Asymptotic Analysis, whose goal is to provide approximations of the solutions that are asymptotically correct as the perturbation parameter  $\epsilon$ tends to $0$. There are different ways to arrive to this kind of results. For example, one may resort to the Matched Asymtotic Expansion Method as in the book of Il'in \cite{Il92}, or  use the Multi-Scale   Expansion Method, which is also called Compound Expansion Method in the two-volume book of  Maz'ya,  Nazarov and Plamenewskii \cite{MaNaPl00a,MaNaPl00b}. Other techniques or variations of these two  can be found, for example, in  Kozlov,  Maz'ya and  Movchan \cite{KoMaMo99} (where the authors consider domains  depending on a small parameter $\epsilon$ in such a way that the limit regions consist of subsets of different space dimensions),  Bonnaillie-No\"el, Dambrine, Tordeux, and Vial \cite{BoDaToVi09} and Bonnaillie-No\"el, Dambrine, and  Lacave  \cite{BoDaLa16} (for problems in domains with two holes that collide to one another), and  in the  book of Maz'ya, Movchan, and Nieves \cite{MaMoNi13} (for the analysis of problems in a domain containing  ``clouds'' of small holes).  We also mention Dauge, Tordeux,  and  Vial~\cite{DaToVi10}, where the authors  compare the Multi-Scale and the Matched Asymptotic Expansion Methods in a corner perturbation problem.

Starting twenty years ago, a different approach was employed by Lanza de Cristoforis and his collaborators, the so-called Functional  Analytic Approach (see the seminal papers \cite{La02,La08}). In some cases, this approach is complementary 
 to the expansion methods of Asymptotic Analysis: Whereas the expansion methods produce asymptotic approximations of the solutions, the Functional  Analytic Approach aims at representing the exact solutions as real analytic maps and known functions of the perturbation parameters. Representation formulas of this type can then be used to expand the solutions as converging power series and compute explicitly (and in a constructive way) the coefficients. Moreover,   the Functional Analytic Approach has revealed to be extremely powerful  when dealing with nonlinear problems, which are not easily handled with the tools of Asymptotic Analysis. For a detailed description of the method and a careful comparison of its results with those of Asymptotic Analysis, we refer to the recent monograph \cite{DaLaMu21}.

The Functional Analytic Approach was successfully applied to a number of geometric situations and differential operators, including  linear and nonlinear problems for the Laplace equation in a domain with an interior hole \cite{La08,La10}, problems with two close holes \cite{DaMu16,DaMu17} or holes close to the boundary \cite{BoEtAl18,BoEtAl20}, perturbations near the vertex of a sector \cite{CoDaDaMu17}, elliptic systems \cite{Da13,DaLa11} and the Helmholtz equation \cite{AkLa22, AkLa22bis}. Periodic problems were studied with the purpose of analyzing the effective properties of composite materials \cite{DaLuMuPu22, LuMu20} and dilute composites \cite{DaMuPu19}.

The present  paper is the first application of the Functional Analytic Approach to quasi-periodic problems. More specifically, we will deal with quasi-periodic problems for the Helmholtz equation. These problems find applications in spectral theory, because periodic eigenvalue problems can be transformed into families of problems for quasi-periodic functions  (see, e.g., Nazarov, Ruotsalainen, and Taskinen \cite{NaRuTa12}, Ferraresso and Taskinen \cite{FeTa15}). Also, quasi-periodic problems for the Helmholtz equation arise for example in scattering theory and in the study of metamaterials and phononic crystals (see Ammari and collaborators \cite{AmFiGoLeZh17, AmFiLeYu17, AmHiYu22, AmKaSoZr06}, Aylwin, Jerez-Hanckes, and Pinto \cite{AyJePi20}, Bruno and Fernandez-Lado \cite{BrFe20}, Bruno and Reitich \cite{BrRe92}, Bruno, Shipman, Turc, and Venakides \cite{BrShTuVe17}, Pérez-Arancibia, Shipman, Turc, and Venakides \cite{PeShTuVe19}).

Since the standard application of the Functional Analytic Approach is based on potential theory, our first step is to recall the construction of a quasi-periodic fundamental solution for the Helmholtz equation and of the related layer potentials (see Sections \ref{s:fund} and \ref{s:pot}). Then we use these layer potentials to construct the solutions of quasi-periodic boundary value problems (see Section \ref{s:qperbvp}). Next, we turn to singular perturbation problems and we study the behavior of quasi-periodic  layer potentials supported on the boundary of singularly perturbed periodic domains (this is done in Section \ref{s:singpot}). Finally, we show an application of the results so far collected. Out of the many problems we may choose to study, we opt for a nonlinear Robin problem on an infinite domain with a periodic set of holes of size $\epsilon$ (see Section \ref{s:singbvp}). This problem allows to illustrate some of the features of the Functional Analytic Approach, but there is no other special reason for our choice. Scientists interested in more specific applications may try to follow  our blueprint and modify the computations. Possible variants may include different periodic geometries, different boundary conditions, and different operators.

\section{Quasi-periodic fundamental solutions for the Helmholtz equation}\label{s:fund} 

In this section, we introduce certain fundamental solutions for the Helmholtz equation. That is, for the equation $\Delta u+k^2 u=0$ with $k\in \mathbb{C}$ and $\Delta:=\sum_{j=1}^n\partial^2_{x_j}$.
The complex number $k^2$ is often referred to as the wave number. 
 Our final goal is to study quasi-periodic  problems   using a quasi-periodic version of the potential theory for the Helmholtz equation. So, after presenting a family of fundamental solutions for the standard (not periodic) equation, we will define its quasi-periodic counterpart and the related quasi-periodic layer potentials. 

\subsection{A family of fundamental solutions for the Helmholtz equation}\label{s:holfundsol}


We present the family of fundamental solutions $\{S_n(\cdot,k)\}_{k \in \mathbb{C}}$ that was introduced by Lanza de Cristoforis and Rossi in \cite{LaRo08}. One feature of this family is that the functions $S_n(\cdot,k)$ depend holomorphically on the parameter $k\in \mathbb{C}$. This property will come handy when studying  the effect of singular domain perturbations on the quasi-periodic layer potentials (see Section \ref{s:singpot}).

Since for $k=0$ the operator $\Delta+k^2$ is just $\Delta$, we start with the classical fundamental solution of the Laplace operator. That is, the function $S_{n}$  from ${\mathbb{R}}^{n}\setminus\{0\}$ to ${\mathbb{R}}$ defined by
\[
S_{n}(x):=
\left\{
\begin{array}{lll}
\frac{1}{s_{2}}\log |x| \qquad &   \forall x\in
{\mathbb{R}}^{{2}}\setminus\{0\},\quad & {\mathrm{if}}\ n=2\,,
\\
\frac{1}{(2-n)s_{n}}|x|^{2-n}\qquad &   \forall x\in
{\mathbb{R}}^{n}\setminus\{0\},\quad & {\mathrm{if}}\ n\geq3\,,
\end{array}
\right.
\]
where $s_n$ denotes the $(n-1)$-dimensional measure of the unit sphere in $\mathbb{R}^n$.

Then, we denote by $\gamma$, $\Gamma$, $J_\nu$, $N_{\nu}$ the Euler constant, the Euler Gamma function, the Bessel function of order $\nu \in \mathbb{R}$, and the Neumann function of order $\nu \in \mathbb{R}$, respectively (as in Schwartz \cite[Ch. VIII, IX]{Sc65}). We have the following technical lemma (see Lanza de Cristoforis and Rossi \cite[Lemma 3.1]{LaRo08}).

\begin{lem}\label{lmm:hlmtec}
Let $n \in \mathbb{N} \setminus \{0,1\}$. Then the following statements hold.
\begin{itemize}
\item[i)] If $n$ is even, then the map from $]0,+\infty[$ to $\mathbb{R}$ that takes $t$ to
\[
t^{\frac{n-2}{2}}\Bigl\{N_{\frac{n-2}{2}}(t)-\frac{2}{\pi}(\log(t/2)+\gamma)J_{\frac{n-2}{2}}(t)\Bigr\} \qquad \forall t \in \mathopen]0,+\infty[,
\]
admits a unique holomorphic extension $\tilde{N}_{\frac{n-2}{2}}(\cdot)$ from $\mathbb{C}$ to $\mathbb{C}$, and
 $\tilde{N}_{\frac{n-2}{2}}(0)=-\pi^{-1}2^{\frac{n-2}{2}}(\frac{n-4}{2})!$ for {$n\geq 4$}, and $\lim_{t\to 0}\tilde{N}_{\frac{n-2}{2}}(t) t^{-2}=\frac{1}{2\pi}$ for $n=2$.
\item[ii)] If $\nu \in \mathbb{R}$, then the map from $]0,+\infty[$ to $\mathbb{R}$ that takes $t$ to $t^{-\nu}J_{\nu}(t)$ admits a unique holomorphic extension $\tilde{J}_{\nu}$ from $\mathbb{C}$ to $\mathbb{C}$.
\item[iii)]If $n$ is even, then we have $\tilde{J}_{\frac{n-2}{2}}(0)=2^{-\frac{n-2}{2}}/(\frac{n-2}{2})!$. If $n$ is odd, then we have $\tilde{J}_{-\frac{n-2}{2}}(0)=(-1)^{\frac{n-3}{2}}\frac{2^{\frac{n-2}{2}}}{\pi}(\frac{n-2}{2})^{-1}\Gamma(n/2)$. 
\end{itemize}
\end{lem}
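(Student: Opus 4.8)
The plan is to work directly from the power series representations of the Bessel and Neumann functions (as recalled in \cite{Sc65}), to check that multiplication by the indicated power of $t$ cancels all singular behaviour, and then to read off the values at the origin from the leading coefficients. In each of the three cases the uniqueness of a holomorphic extension is immediate from the identity principle: since $\mathopen]0,+\infty[$ has accumulation points in $\mathbb{C}$, two entire functions that agree there coincide everywhere.

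For statement (ii) and the even case of (iii), I would start from
\[
J_{\nu}(t)=\sum_{m=0}^{\infty}\frac{(-1)^{m}}{m!\,\Gamma(m+\nu+1)}\Bigl(\frac{t}{2}\Bigr)^{2m+\nu},
\]
so that $t^{-\nu}J_{\nu}(t)=\sum_{m=0}^{\infty}\frac{(-1)^{m}2^{-2m-\nu}}{m!\,\Gamma(m+\nu+1)}\,t^{2m}$. Since $1/\Gamma$ is entire, all coefficients are finite and this power series in $t$ has infinite radius of convergence; it therefore defines the required entire extension $\tilde J_{\nu}$, and the $m=0$ term gives $\tilde J_{\nu}(0)=1/(2^{\nu}\Gamma(\nu+1))$. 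When $n$ is even, $\nu=\frac{n-2}{2}$ is a nonnegative integer, hence $\Gamma(\nu+1)=(\frac{n-2}{2})!$ and $\tilde J_{\frac{n-2}{2}}(0)=2^{-\frac{n-2}{2}}/(\frac{n-2}{2})!$. When $n$ is odd, $\nu=-\frac{n-2}{2}$ is a negative half-integer, so $\tilde J_{-\frac{n-2}{2}}(0)=2^{\frac{n-2}{2}}/\Gamma(\frac{4-n}{2})$; using $\Gamma(\frac n2)=\frac{n-2}{2}\Gamma(\frac{n-2}{2})$, the asserted identity reduces to $\Gamma(\frac{n-2}{2})\Gamma(\frac{4-n}{2})=(-1)^{\frac{n-3}{2}}\pi$, which follows from Euler's reflection formula $\Gamma(z)\Gamma(1-z)=\pi/\sin(\pi z)$ at $z=\frac{n-2}{2}$ together with the elementary identity $\sin\bigl(\tfrac{(n-2)\pi}{2}\bigr)=(-1)^{\frac{n-3}{2}}$ for odd $n$.

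For statement (i), set $m:=\frac{n-2}{2}\in\mathbb{N}$ and recall the classical expansion of the integer-order Neumann function: the non-logarithmic remainder $N_{m}(t)-\frac{2}{\pi}(\log(t/2)+\gamma)J_{m}(t)$ equals
\[
-\frac{1}{\pi}\sum_{k=0}^{m-1}\frac{(m-k-1)!}{k!}\Bigl(\frac{t}{2}\Bigr)^{2k-m}-\frac{1}{\pi}\sum_{k=0}^{\infty}\frac{(-1)^{k}(h_{k}+h_{k+m})}{k!\,(k+m)!}\Bigl(\frac{t}{2}\Bigr)^{2k+m},
\]
with $h_{0}:=0$ and $h_{k}:=\sum_{j=1}^{k}1/j$. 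Multiplying by $t^{m}$ turns $(t/2)^{2k-m}$ into $2^{m-2k}t^{2k}$ and $(t/2)^{2k+m}$ into $2^{-2k-m}t^{2k+2m}$, so the expression becomes a polynomial plus an everywhere-convergent power series in $t$, namely the entire function $\tilde N_{m}$. If $n\ge4$ (so $m\ge1$), the only term not vanishing at $t=0$ is the $k=0$ term of the finite sum, which gives $\tilde N_{m}(0)=-\tfrac{1}{\pi}2^{m}(m-1)!=-\pi^{-1}2^{\frac{n-2}{2}}(\frac{n-4}{2})!$. If $n=2$ (so $m=0$), the finite sum is empty, the $k=0$ term of the remaining series vanishes because $h_{0}=0$, and its $k=1$ term contributes $\frac{t^{2}}{2\pi}$, whence $\lim_{t\to0}\tilde N_{0}(t)\,t^{-2}=\frac{1}{2\pi}$.

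The only genuinely delicate point is the bookkeeping in (i): one must fix the correct normalization of the integer-order Neumann function (the constant multiplying $\log(t/2)$, the harmonic-number coefficients, and the finite polar part) and then carefully track the leading coefficient through the multiplication by $t^{m}$, keeping the cases $m\ge1$ and $m=0$ apart. Everything else is a routine consequence of the convergence of the defining power series, the identity principle, and — for the odd-dimensional half of (iii) — the reflection formula for the Gamma function.
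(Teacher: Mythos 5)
Your proof is correct, and all three items check out: item (ii) and the first half of (iii) follow exactly as you say from the convergence of $\sum_{m}\frac{(-1)^{m}2^{-2m-\nu}}{m!\,\Gamma(m+\nu+1)}t^{2m}$; the odd-dimensional half of (iii) reduces, as you compute, to $\Gamma\bigl(\tfrac{n-2}{2}\bigr)\Gamma\bigl(\tfrac{4-n}{2}\bigr)=(-1)^{\frac{n-3}{2}}\pi$, which is the reflection formula combined with $\sin\bigl(\tfrac{(n-2)\pi}{2}\bigr)=(-1)^{\frac{n-3}{2}}$ for odd $n$; and in item (i), the $k=0$ term of the polar sum yields $-\pi^{-1}2^{m}(m-1)!$ when $m\ge1$, while for $m=0$ the finite sum is empty, the $k=0$ term of the power series drops out because $h_{0}=0$, and the $k=1$ term is $-\tfrac{1}{\pi}\cdot\tfrac{(-1)\cdot 2h_1}{(1!)^2}\cdot\tfrac{t^2}{4}=\tfrac{t^2}{2\pi}$, giving the stated limit. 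Note, however, that the paper does not actually prove this lemma: it is quoted verbatim with a citation to Lanza de Cristoforis and Rossi \cite[Lemma 3.1]{LaRo08}, so there is no in-paper argument to compare against. Your direct power-series argument is the natural and essentially unavoidable route to this result; the one point worth making explicit in a write-up is the convention for $N_m$ (equivalently $Y_m$), since the $\tfrac{2}{\pi}(\log(t/2)+\gamma)$ subtraction is tuned precisely to cancel the digamma constants $-2\gamma$ appearing in the standard expansion, and a different normalization of the Neumann function would break the bookkeeping in item (i).
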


Before introducing the fundamental solution $S_n(\cdot,k)$ of $\Delta + k^2$, we need to introduce some further notation (cf. Lanza de Cristoforis and Rossi \cite[Def.~3.2]{LaRo08}).
\begin{defn}\label{def:hlmups}
Let $n \in \mathbb{N} \setminus \{0,1\}$.
\begin{itemize}
\item[i)]If $n$ is even, then we set
\begin{align*}
& \mathcal{J}_n(z):= (2\pi)^{-n/2}\tilde{J}_{\frac{n-2}{2}}(z),\\ \notag
& \mathcal{N}_n(z):= 2^{-(n/2)-1}\pi^{-(n/2)+1}\tilde{N}_{\frac{n-2}{2}}(z), \notag
\end{align*}
for all $z \in \mathbb{C}$.
\item[ii)] If $n$ is odd, then we set 
\begin{align*}
& \mathcal{J}_n(z):= 0,\\ \notag
& \mathcal{N}_n(z):= (-1)^{\frac{n-1}{2}}2^{-(n/2)-1}\pi^{-(n/2)+1}\tilde{J}_{-\frac{n-2}{2}}(z), \notag
\end{align*}
for all $z \in\mathbb{C}$.
\item[iii)]We set
\[
\Upsilon_n(r,k):= k^{n-2}\mathcal{J}_n(rk)\log r+\frac{\mathcal{N}_n(rk)}{r^{n-2}},
\]
for all $(r,k) \in \mathopen]0,+\infty\mathclose[\times \mathbb{C}$.  
\end{itemize}
\end{defn}
Here, we agree that $0^0=1$. Then we have the following (see Lanza de Cristoforis and Rossi \cite[Prop.~3.3]{LaRo08}).
\begin{prop}\label{prop:hlmfs}
 Let $n \in \mathbb{N} \setminus \{0,1\}$. Then the following statements hold.
\begin{itemize}
\item[i)] $\mathcal{J}_2(0)=\frac{1}{2\pi}$ and if $n\ge 4$ is even then $\mathcal{J}_n(0)=2^{1-n}\pi^{-n/2}/(\frac{n-2}{2})!$.\\ $\mathcal{N}_2(0)=0$ and $\mathcal{N}_n(0)=(2-n)^{-1}s_n^{-1}$ for all $n \geq 3$.
\item[ii)] $\mathcal{J}_n$ and $\mathcal{N}_n$ are entire holomorphic functions. The function $\Upsilon_n$ is real analytic on $\mathopen]0,+\infty\mathclose[\times \mathbb{C}$.
\item[iii)] Let $k \in \mathbb{C}$. The function  ${S_n(\cdot,k)}$ from $\mathbb{R}^n \setminus \{0\}$ to $\mathbb{C}$ defined by 
\[
{S_n(x,k):=\Upsilon_n(|x|,k)}
\]
 for all $x \in \mathbb{R}^n \setminus \{0\}$ is a fundamental solution of $\Delta+k^2$. In particular, $S_n(\cdot,0)$ is the usual fundamental solution  $S_n$ of the Laplace operator.
\end{itemize}
\end{prop}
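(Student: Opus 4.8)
The plan is to verify Proposition \ref{prop:hlmfs} by reducing each assertion to the technical Lemma \ref{lmm:hlmtec} and to the known properties of Bessel and Neumann functions. Part (i) is purely computational: the values $\mathcal{J}_n(0)$ and $\mathcal{N}_n(0)$ follow by substituting $z=0$ into Definition \ref{def:hlmups} and invoking Lemma \ref{lmm:hlmtec}(iii) for $\tilde{J}_{\frac{n-2}{2}}(0)$ (even $n$) and $\tilde{J}_{-\frac{n-2}{2}}(0)$ (odd $n$), together with the limit information in Lemma \ref{lmm:hlmtec}(i) for $\tilde{N}_{\frac{n-2}{2}}$. The only point requiring care is the case $n=2$, where one must use $\lim_{t\to 0}\tilde{N}_0(t)t^{-2}=\frac{1}{2\pi}$ to see that $\mathcal{N}_2(0)=0$, and then double-check that the normalization constants $2^{-(n/2)-1}\pi^{-(n/2)+1}$ match $(2-n)^{-1}s_n^{-1}$ for $n\ge 3$ odd, recalling $s_n=2\pi^{n/2}/\Gamma(n/2)$.

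For part (ii), the holomorphy of $\mathcal{J}_n$ and $\mathcal{N}_n$ on all of $\mathbb{C}$ is immediate from Lemma \ref{lmm:hlmtec}(i)--(ii), which asserts that $\tilde{J}_\nu$ and $\tilde{N}_{\frac{n-2}{2}}$ are entire; the defining formulas for $\mathcal{J}_n$ and $\mathcal{N}_n$ are just these entire functions times constants (with $\mathcal{J}_n\equiv 0$ trivially entire for odd $n$). Real analyticity of $\Upsilon_n(r,k)=k^{n-2}\mathcal{J}_n(rk)\log r+r^{2-n}\mathcal{N}_n(rk)$ on $\mathopen]0,+\infty\mathclose[\times\mathbb{C}$ then follows because each factor is real analytic there: $k\mapsto k^{n-2}$ is a polynomial, $(r,k)\mapsto\mathcal{J}_n(rk)$ and $(r,k)\mapsto\mathcal{N}_n(rk)$ are compositions of entire functions with the real-analytic map $(r,k)\mapsto rk$, and $r\mapsto\log r$, $r\mapsto r^{2-n}$ are real analytic on $\mathopen]0,+\infty\mathclose[$. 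Products and sums of real-analytic functions are real analytic, which closes this part.

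Part (iii) is the substantive one. I would argue in two stages. First, for $k=0$: from part (i) one computes $\Upsilon_n(r,0)=0^{\,0}\cdot\mathcal{J}_n(0)\log r\cdot[n=2]+r^{2-n}\mathcal{N}_n(0)$, and using the values $\mathcal{J}_2(0)=\frac{1}{2\pi}$, $\mathcal{N}_2(0)=0$ for $n=2$ and $\mathcal{N}_n(0)=(2-n)^{-1}s_n^{-1}$ for $n\ge 3$, one recovers exactly the formula defining the classical $S_n$; hence $S_n(\cdot,0)=S_n$. Second, for general $k\in\mathbb{C}\setminus\{0\}$: the strategy is to recognize $\Upsilon_n(|x|,k)$ as (a constant multiple of) the classical Hankel-type fundamental solution of $\Delta+k^2$. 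Concretely, unwinding the definitions of $\tilde{J}_{\frac{n-2}{2}}$ and $\tilde{N}_{\frac{n-2}{2}}$ in terms of $J_{\frac{n-2}{2}}$ and $N_{\frac{n-2}{2}}$, one checks that for $k>0$ the function $S_n(x,k)$ coincides with the standard radial fundamental solution $\frac{i}{4}\bigl(\frac{k}{2\pi|x|}\bigr)^{\frac{n-2}{2}}H^{(1)}_{\frac{n-2}{2}}(k|x|)$ up to the choice of real versus complex combination of $J$ and $N$ (the construction in \cite{LaRo08} keeps a specific real-analytic-in-$k$ representative rather than the outgoing one, which is why $\Upsilon_n$ carries a separate $\log r$ term packaging the logarithmic singularity). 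Since that classical radial function is well known to satisfy $(\Delta+k^2)S_n(\cdot,k)=\delta_0$ in $\mathcal{D}'(\mathbb{R}^n)$ — both because it solves the Bessel ODE in $r=|x|$ away from the origin, matching $\Delta$ restricted to radial functions, and because its singularity at $0$ is precisely that of $S_n$ (by the $t\to0$ asymptotics in Lemma \ref{lmm:hlmtec}, $S_n(x,k)-S_n(x)\to$ a smooth function near $0$) — the distributional identity follows for $k>0$, and then extends to all $k\in\mathbb{C}$ by the identity principle, using that both $k\mapsto\langle S_n(\cdot,k),\varphi\rangle$ and $k\mapsto\langle\delta_0,\varphi\rangle=\varphi(0)$ are holomorphic in $k$ (holomorphy of the pairing coming from part (ii) and dominated convergence). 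The main obstacle is the careful bookkeeping in this last stage: one must justify differentiating under the integral/distributional pairing to transfer the classical PDE identity to the Lanza–Rossi representative, control the integrable singularity at the origin uniformly for $k$ in compact sets, and correctly match all the normalization constants between the $\tilde J$, $\tilde N$ notation and the Hankel function normalization — none of it deep, but all of it error-prone. In practice the cleanest route is to cite \cite[Prop.~3.3]{LaRo08} for the core computation and merely indicate how the pieces fit, rather than reproducing the Bessel-function manipulations in full.
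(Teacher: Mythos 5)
The paper itself does not prove Proposition \ref{prop:hlmfs}: it is imported verbatim from Lanza de Cristoforis and Rossi \cite[Prop.~3.3]{LaRo08}, exactly as you anticipate in your last sentence. So there is no in-paper proof to compare against; what you have written is a reconstruction, and it is correct in outline. Parts (i) and (ii) reduce to Lemma \ref{lmm:hlmtec} and arithmetic exactly as you say, and the constant-matching (e.g.\ $(2-n)^{-1}s_n^{-1}$ versus the $2^{-(n/2)-1}\pi^{-(n/2)+1}$ normalizations) does work out, using $s_n=2\pi^{n/2}/\Gamma(n/2)$ and, in the even case, $((n-2)/2)!=((n-2)/2)\cdot((n-4)/2)!$.

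One comment on part (iii). The detour through $k>0$ and Hankel functions is a small overhead and is also slightly misleading as phrased: $S_n(\cdot,k)$ is built from a real combination of $J_\nu$ and $N_\nu$ (via $\tilde J_\nu,\tilde N_\nu$), not from $H^{(1)}_\nu=J_\nu+iN_\nu$, so it is not ``the standard radial fundamental solution'' up to a constant; it is a \emph{different} fundamental solution with the same principal singularity. What actually makes the argument go is precisely the two facts you list in parentheses --- that $\Upsilon_n(\cdot,k)$ satisfies the radial Helmholtz ODE away from $r=0$ (a restatement of the Bessel and Neumann equations), and that, by the $t\to 0$ asymptotics of Lemma \ref{lmm:hlmtec} and the structure of $\Upsilon_n$, the difference $S_n(\cdot,k)-S_n(\cdot)$ extends continuously (indeed with enough regularity for Green's identity) across the origin. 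Once you have those two facts, the standard cut-out-a-small-ball Green's identity argument gives $(\Delta+k^2)S_n(\cdot,k)=\delta_0$ directly for \emph{every} fixed $k\in\mathbb{C}$, with no need to first treat $k>0$ and then invoke the identity principle and a holomorphy-of-the-pairing lemma. Both routes are valid, but the direct one is shorter and avoids the bookkeeping you flag as error-prone.
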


One may observe that $S_n(\cdot,k)$ does not coincide with the fundamental solution of $\Delta +k^2$ that is commonly used in scattering theory (cf. e.g. Colton and Kress \cite{CoKr13}). The advantage of working with  $S_n(\cdot,k)$ will be clarified in Section \ref{s:singpot}, where we will exploit {its} holomorphic dependence on $k\in \mathbb{C}$. 

\subsection{A quasi-periodic fundamental solution for the Helmholtz equation}

We now show how we can use Fourier analysis to define a quasi-periodic fundamental solution for the Helmholtz equation.  For this construction we refer, for example, to Ammari, Kang and Lee \cite[p. 123]{AmKaLe09}, Ammari, Kang, Soussi and Zribi \cite{AmKaSoZr06}, Dienstfrey, Hang and Huang \cite{DiHaHu01}, Linton \cite{Li98},  Poulton, Botten, McPhedran and Movchan \cite{PoBoMcMo99}.

Let $n \in \mathbb{N}$, $n \geq 2$  represent the dimension of the space.
 We take 
\[
(q_{11},\ldots,q_{nn}) \in \mathopen]0,+\infty[^n
\] 
and we define a periodicity cell $Q \subseteq \mathbb{R}^n$ and a matrix $q \in {\mathbb{D}}_{n}^{+}({\mathbb{R}})$ by
\[
Q := \prod_{j=1}^n \mathopen]0,q_{jj}[, \quad
q := 
 \begin{pmatrix}
  q_{11} & 0 & \cdots & 0 \\
  0 & q_{22} & \cdots & 0 \\
  \vdots  & \vdots  & \ddots & \vdots  \\
  0 & 0 & \cdots & q_{nn} 
 \end{pmatrix},
\]
where {${\mathbb{D}}^+_{n}({\mathbb{R}})$} is the space of $n\times n$ diagonal matrices with {positive} real entries {on the diagonal}. 
We denote by $|Q|_n$  the $n$-dimensional measure of the cell $Q$, by $\nu_Q$ the outward unit normal to $\partial Q$, where it exists, and by $q^{-1}$ the inverse matrix of $q$.

Let $\eta \in \mathbb{R}^n$. A function $f: \mathbb{R}^n \to \mathbb{C}$ is said to be $\eta$-quasi-periodic with respect to $Q$, or simply $(Q,\eta)$-quasi-periodic, if $f(x) e^{-i\eta \cdot x}$ is periodic with respect to Q,  that is if 
\[
f(x+qe_h)e^{-i\eta \cdot (x+qe_h)} = f(x) e^{-i\eta \cdot x} \qquad \forall x\in \mathbb{R}^n, \,\forall h \in \{1,\ldots,n\},  
\]
where $e_1,\ldots,e_n$ is the standard basis of $\mathbb{R}^n$.  Clearly, if $j \in\{1,\dots,n\}$ a function $f$ is $(Q,\eta)$-quasi-periodic if and only if it is $(\eta+2\pi q_{jj}^{-1}e_j,Q)$-quasi-periodic. As a consequence, it would suffice to consider the case $\eta \in {\prod_{j=1}^n[0,2\pi q_{jj}^{-1}[}$. However, for the sake of brevity we will write $\eta \in \mathbb{R}^n$.

Let $k \in \mathbb{C}$. We now introduce a $(Q,\eta)$-quasi-periodic distribution that will play the role of the fundamental solution of the Helmholtz operator $\Delta+k^2$.
We set   
\[
Z_{q,\eta}(k) := \left\{z \in \mathbb{Z}^n : k^2 = |2\pi q^{-1}z +\eta|^2\right\}.
\]
One can verify that the set $Z_{q,\eta}(k)$ is finite. Let  $G^{k}_{q,\eta}$ be defined by the generalized series
\begin{equation*}
G^{k}_{q,\eta} := \sum_{z \in \mathbb{Z}^n \setminus Z_{q,\eta}(k)  } \frac{1}{|Q|_n(k^2-|2\pi q^{-1}z +\eta|^2)}E_{2\pi q^{-1}z +\eta}.
\end{equation*}
We have denoted by $E_{2\pi q^{-1}z +\eta}$  the distribution associated with the function $e^{ix\cdot (2\pi q^{-1}z +\eta)}$. A standard argument shows that the above generalized series defines a tempered distribution in $\mathcal{S}'(\mathbb{R}^n)$ (see e.g. \cite[Thm. 12.2 p. 486]{DaLaMu21}). In the next proposition we show that the distribution $G^{k}_{q,\eta}$ enjoys a property that allows it to be exploited as an analog of the fundamental solution in the quasi-periodic setting.
\begin{prop}\label{qpfund}
Let $q \in \mathbb{D}^+_n(\mathbb{R})$, $\eta \in \mathbb{R}^n$, and $k \in \mathbb{C}$. Then
$G^{k}_{q,\eta}$  is $(Q,\eta)$-quasi-periodic  in the sense of distributions and 
\begin{equation}\label{qpfund1}
(\Delta+k^2)G^{k}_{q,\eta} = \sum_{z \in \mathbb{Z}^n}\delta_{qz}e^{iqz\cdot \eta} - \sum_{z \in Z_{q,\eta}(k)}\frac{1}{|Q|_n}E_{2\pi q^{-1}z +\eta}
\end{equation}
in the sense of distributions. Here above, the symbol $\delta_{qz}$ denotes the Dirac delta distribution with mass concentrated at $qz$.
\end{prop}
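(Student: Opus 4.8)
The plan is to work directly with the generalized Fourier series that defines $G^{k}_{q,\eta}$, differentiating it term by term and then recognizing the resulting series through the Poisson summation formula for the lattice $q\mathbb{Z}^n$.

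I would first dispose of the quasi-periodicity. Each summand $E_{2\pi q^{-1}z+\eta}$ is already $(Q,\eta)$-quasi-periodic: for $h\in\{1,\dots,n\}$ one has $e^{i(x+qe_h)\cdot(2\pi q^{-1}z+\eta)}=e^{2\pi i z_h}\,e^{i\eta\cdot qe_h}\,e^{ix\cdot(2\pi q^{-1}z+\eta)}=e^{i\eta\cdot qe_h}\,e^{ix\cdot(2\pi q^{-1}z+\eta)}$, which is exactly the relation $f(x+qe_h)=e^{i\eta\cdot qe_h}f(x)$. Since the series defining $G^{k}_{q,\eta}$ converges in $\mathcal{S}'(\mathbb{R}^n)$ (its coefficients are $O(|z|^{-2})$ as $|z|\to\infty$, cf.\ \cite[Thm.~12.2]{DaLaMu21}) and translation is continuous on $\mathcal{S}'(\mathbb{R}^n)$, the relation $G^{k}_{q,\eta}(\cdot+qe_h)=e^{i\eta\cdot qe_h}G^{k}_{q,\eta}$ passes to the limit, so $G^{k}_{q,\eta}$ is $(Q,\eta)$-quasi-periodic in the sense of distributions.

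For \eqref{qpfund1} I would use that $\Delta+k^2$ is continuous on $\mathcal{S}'(\mathbb{R}^n)$ to differentiate term by term. Since $(\Delta+k^2)E_{2\pi q^{-1}z+\eta}=(k^2-|2\pi q^{-1}z+\eta|^2)E_{2\pi q^{-1}z+\eta}$, the denominator in each coefficient cancels and
\[
(\Delta+k^2)G^{k}_{q,\eta}=\frac{1}{|Q|_n}\sum_{z\in\mathbb{Z}^n\setminus Z_{q,\eta}(k)}E_{2\pi q^{-1}z+\eta}
=\frac{1}{|Q|_n}\sum_{z\in\mathbb{Z}^n}E_{2\pi q^{-1}z+\eta}-\sum_{z\in Z_{q,\eta}(k)}\frac{1}{|Q|_n}E_{2\pi q^{-1}z+\eta},
\]
the last sum being finite because $Z_{q,\eta}(k)$ is finite. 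It then remains to prove that $\sum_{z\in\mathbb{Z}^n}\delta_{qz}e^{iqz\cdot\eta}=\frac{1}{|Q|_n}\sum_{z\in\mathbb{Z}^n}E_{2\pi q^{-1}z+\eta}$. This last identity is the Poisson summation formula, twisted by $\eta$: for the lattice $q\mathbb{Z}^n$, whose covolume equals $|Q|_n$ and whose dual lattice is $2\pi q^{-1}\mathbb{Z}^n$ (here one uses that $q$ is diagonal with positive diagonal entries), Poisson summation reads $\sum_{z\in\mathbb{Z}^n}\delta_{qz}=\frac{1}{|Q|_n}\sum_{z\in\mathbb{Z}^n}E_{2\pi q^{-1}z}$ in $\mathcal{S}'(\mathbb{R}^n)$; multiplying both sides by the smooth, polynomially bounded function $e^{i\eta\cdot x}$ and using the elementary identities $e^{i\eta\cdot x}\delta_{qz}=e^{i\eta\cdot qz}\delta_{qz}=\delta_{qz}e^{iqz\cdot\eta}$ and $e^{i\eta\cdot x}E_{2\pi q^{-1}z}=E_{2\pi q^{-1}z+\eta}$ yields precisely the desired identity, hence \eqref{qpfund1}. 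The only delicate point is the bookkeeping: fixing the Fourier convention implicit in the notation $E_\xi$, correctly pinning down the normalizing constant $|Q|_n$ and the dual lattice in the Poisson summation formula, and checking that multiplication by $e^{i\eta\cdot x}$, application of $\Delta+k^2$, and translation all commute with the $\mathcal{S}'$-convergence of the series — none of which poses a genuine difficulty once the conventions are spelled out.
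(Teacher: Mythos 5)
Your argument is essentially the paper's own proof: term-by-term differentiation of the defining series using the eigenvalue identity $(\Delta+k^2)E_{2\pi q^{-1}z+\eta}=(k^2-|2\pi q^{-1}z+\eta|^2)E_{2\pi q^{-1}z+\eta}$, cancellation of the coefficient denominators, splitting off the finite sum over $Z_{q,\eta}(k)$, and identifying the remaining full sum via the ($\eta$-twisted) Poisson summation formula for the lattice $q\mathbb{Z}^n$. You spell out the $\mathcal{S}'$-convergence and continuity justifications a bit more explicitly than the paper does, but the route and the key ingredients are the same.
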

\begin{proof}
Since $G^{k}_{q,\eta}$  is a generalized sum of distributions that are  $(Q,\eta)$-quasi-periodic, then it is a $(Q,\eta)$-quasi-periodic distribution.  Next we pass to prove equality \eqref{qpfund1}. A direct computation of the distributional derivatives of $G^{k}_{q,\eta}$  shows that 
\[
\partial_{x_j} G^{k}_{q,\eta} = \sum_{z \in \mathbb{Z}^n \setminus Z_{q,\eta}(k)  } \frac{i(2\pi q^{-1}_{jj}z_j+\eta_j)}{|Q|_n(k^2-|2\pi q^{-1}z +\eta|^2)}E_{2\pi q^{-1}z +\eta},
\]
\[
\partial^2_{x_j} G^{k}_{q,\eta} = -\sum_{z \in \mathbb{Z}^n \setminus Z_{q,\eta}(k)  } \frac{(2\pi q^{-1}_{jj}z_j+\eta_j)^2}{|Q|_n(k^2-|2\pi q^{-1}z +\eta|^2)}E_{2\pi q^{-1}z +\eta},
\]
and accordingly
\begin{equation}\label{qpfund2}
\Delta G^{k}_{q,\eta}  = -\sum_{z \in \mathbb{Z}^n \setminus Z_{q,\eta}(k)  } \frac{|2\pi q^{-1}z+\eta|^2}{|Q|_n(k^2-|2\pi q^{-1}z +\eta|^2)}E_{2\pi q^{-1}z +\eta}.
\end{equation}
 Then, by equality \eqref{qpfund2} and by the Poisson summation formula (see e.g. Folland \cite[ p. 254]{Fo99}) one has
\begin{align*}
(\Delta+k^2) G^{k}_{q,\eta} &= \sum_{z \in \mathbb{Z}^n \setminus Z_{q,\eta}(k)  } \frac{k^2-|2\pi q^{-1}z+\eta|^2}{|Q|_n(k^2-|2\pi q^{-1}z +\eta|^2)}E_{2\pi q^{-1}z +\eta} \\
&= \sum_{z \in \mathbb{Z}^n \setminus Z_{q,\eta}(k)  } \frac{1}{|Q|_n }E_{2\pi q^{-1}z +\eta} \\
&= \sum_{z \in \mathbb{Z}^n  } \frac{1}{|Q|_n }E_{2\pi q^{-1}z +\eta} -\sum_{z \in Z_{q,\eta}(k)  } \frac{1}{|Q|_n }E_{2\pi q^{-1}z +\eta} \\
&= \sum_{z \in \mathbb{Z}^n}\delta_{qz}e^{iqz\cdot \eta}  -\sum_{z \in Z_{q,\eta}(k)  } \frac{1}{|Q|_n }E_{2\pi q^{-1}z +\eta}.
\end{align*}
\end{proof}
In order to understand some regularity properties of $G^{k}_{q,\eta}$, we compare it with a general fundamental solution  $\mathbf{G}$  of the Helmholtz operator $\Delta+k^2$ in $\mathbb{R}^n$, like for example could be the function $S_n(\cdot,k)$ 
of Proposition \ref{prop:hlmfs}. 
We note that the idea of comparing an analog of a fundamental {solution in} a periodic setting with a classical fundamental solution has 
been already used in \cite{Mu12} for the periodic Laplace equation, in \cite{LaMu18} for the periodic Helmholtz equation, and in \cite{Lu20} for the periodic heat equation.
\begin{prop}\label{qnSR}
Let $q \in \mathbb{D}^+_n(\mathbb{R})$, $\eta \in \mathbb{R}^n$, and $k \in \mathbb{C}$.
Let $\mathbf{G}$ be a fundamental solution of  the Helmholtz operator $\Delta+k^2$ in $\mathbb{R}^n$.
 Then
the following statements hold.
\begin{itemize}
\item[i)] The distribution $R_{\mathbf{G}} := G^{k}_{q,\eta}-\mathbf{G}$ comes from a real analytic function in $(\mathbb{R}^n\setminus q\mathbb{Z}^n) \cup \{0\}$.
\item[ii)] $G^{k}_{q,\eta}$  comes from a real analytic function   in $\mathbb{R}^n\setminus q\mathbb{Z}^n$. 
\item[iii)] $G^{k}_{q,\eta}$ is in $L^1_{\mathrm{loc}}(\mathbb{R}^n)$.
\end{itemize}
\end{prop}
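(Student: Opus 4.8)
The plan is to deduce all three statements from Proposition \ref{qpfund}, together with standard elliptic regularity and the known local integrability of a classical fundamental solution. For (i), consider the distribution $R_{\mathbf{G}} := G^{k}_{q,\eta}-\mathbf{G}$. By Proposition \ref{qpfund} and the defining property of a fundamental solution, we have
\[
(\Delta+k^2) R_{\mathbf{G}} = \sum_{z \in \mathbb{Z}^n}\delta_{qz}e^{iqz\cdot \eta} - \delta_0 - \sum_{z \in Z_{q,\eta}(k)}\frac{1}{|Q|_n}E_{2\pi q^{-1}z +\eta} .
\]
On the open set $(\mathbb{R}^n\setminus q\mathbb{Z}^n)\cup\{0\}$ the Dirac masses $\delta_{qz}$ with $z\neq 0$ are cancelled (the $z=0$ term cancels with $\delta_0$), so on this open set $R_{\mathbf{G}}$ solves $(\Delta+k^2)R_{\mathbf{G}} = -\sum_{z \in Z_{q,\eta}(k)}\frac{1}{|Q|_n}E_{2\pi q^{-1}z +\eta}$, whose right-hand side is a real analytic function (a finite sum of exponentials). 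Since $\Delta+k^2$ is an elliptic operator with analytic (constant) coefficients and analytic right-hand side, elliptic regularity (analytic hypoellipticity of $\Delta+k^2$) gives that $R_{\mathbf{G}}$ is real analytic on $(\mathbb{R}^n\setminus q\mathbb{Z}^n)\cup\{0\}$. This proves (i).

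For (ii): on the open set $\mathbb{R}^n\setminus q\mathbb{Z}^n$, by Proposition \ref{qpfund} the distribution $G^{k}_{q,\eta}$ itself satisfies $(\Delta+k^2) G^{k}_{q,\eta} = -\sum_{z \in Z_{q,\eta}(k)}\frac{1}{|Q|_n}E_{2\pi q^{-1}z +\eta}$, again an analytic right-hand side, so the same analytic hypoellipticity argument applies directly and shows $G^{k}_{q,\eta}$ is real analytic there. (Alternatively, one could combine (i) with the fact that $\mathbf{G}$ — e.g. $S_n(\cdot,k)$ of Proposition \ref{prop:hlmfs} — is real analytic on $\mathbb{R}^n\setminus\{0\}$, and use $q\mathbb{Z}^n$-quasi-periodicity to transport the local analyticity near the origin to every lattice point; but the direct hypoellipticity argument is cleaner.)

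For (iii): since $G^{k}_{q,\eta}$ is real analytic, hence locally bounded, on $\mathbb{R}^n\setminus q\mathbb{Z}^n$, local integrability is only in question near the lattice points $q\mathbb{Z}^n$. By (i), $G^{k}_{q,\eta} = \mathbf{G} + R_{\mathbf{G}}$ near $0$ with $R_{\mathbf{G}}$ analytic, so near $0$ the local integrability of $G^{k}_{q,\eta}$ reduces to that of $\mathbf{G}$; choosing $\mathbf{G} = S_n(\cdot,k)$, we know from the explicit form in Definition \ref{def:hlmups} and Proposition \ref{prop:hlmfs} that $S_n(\cdot,k)$ behaves like the Laplace fundamental solution $S_n$ near $0$ (logarithmic for $n=2$, of order $|x|^{2-n}$ for $n\geq 3$), which is $L^1_{\mathrm{loc}}$. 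Near any other lattice point $qz$, we use that $G^{k}_{q,\eta}$ is $(Q,\eta)$-quasi-periodic: the function $x\mapsto G^{k}_{q,\eta}(x+qz)e^{-i\eta\cdot qz}$ agrees with $G^{k}_{q,\eta}$, so integrability near $qz$ follows from integrability near $0$. Covering $\mathbb{R}^n$ by a bounded set around each lattice point together with the region where $G^{k}_{q,\eta}$ is bounded yields $G^{k}_{q,\eta}\in L^1_{\mathrm{loc}}(\mathbb{R}^n)$.

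The main obstacle is really just the careful bookkeeping in (iii) — identifying precisely which singular set one must control and invoking quasi-periodicity to reduce all lattice points to the origin — together with making sure the appeal to analytic hypoellipticity of $\Delta+k^2$ is legitimate for complex $k$ (it is, since $\Delta+k^2$ has constant coefficients and is elliptic for every $k\in\mathbb{C}$, its principal symbol being that of $\Delta$). Everything else is a direct consequence of the distributional identity \eqref{qpfund1} already established.
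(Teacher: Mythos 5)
Your proof is correct and follows essentially the same route as the paper: (i) by combining the identity \eqref{qpfund1} with the defining equation of $\mathbf{G}$ and invoking analytic hypoellipticity of $\Delta+k^2$, (ii) by the same elliptic-regularity argument applied directly to $G^{k}_{q,\eta}$, and (iii) by reducing to the local integrability of $\mathbf{G}$ near the origin via the decomposition $G^{k}_{q,\eta}=\mathbf{G}+R_{\mathbf{G}}$ and then using $(Q,\eta)$-quasi-periodicity to transport this to all lattice points. The only cosmetic difference is that you unpack the final step, which the paper states very tersely with a reference to John.
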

\begin{proof}
We consider statement i).
By Proposition \ref{qpfund} and since $\mathbf{G}$ is a fundamental solution for the Helmholtz operator, one has
 \[
(\Delta+k^2)R_{\mathbf{G}} = \sum_{z \in \mathbb{Z}^n\setminus\{0\}}\delta_{qz}e^{iqz\cdot \eta} - \sum_{z \in Z_{q,\eta}(k)}\frac{1}{|Q|_n}E_{2\pi q^{-1}z +\eta}.
\]
Since $\sum_{z \in Z_{q,\eta}(k)}\frac{1}{|Q|_n}E_{2\pi q^{-1}z +\eta}$ is  real analytic and the distribution
$\sum_{z \in \mathbb{Z}^n\setminus\{0\}}\delta_{qz}e^{iqz\cdot \eta}$ vanishes in $(\mathbb{R}^n\setminus q\mathbb{Z}^n) \cup \{0\}$, then by classical elliptic regularity theory $R_{\mathbf{G}}$ is real analytic in $(\mathbb{R}^n\setminus q\mathbb{Z}^n) \cup \{0\}$.

Statement ii) follows in a similar way  by Proposition \ref{qpfund} and standard elliptic regularity theory. 

Finally, statement iii)  follows by the real analyticity of  $R_{\mathbf{G}}$  in $(\mathbb{R}^n\setminus q\mathbb{Z}^n) \cup \{0\}$, by the $(Q,\eta)$-quasi-periodicity of 
$G^{k}_{q,\eta}$ and by the local integrability of  $\mathbf{G}$ in $\mathbb{R}^n$ (see John \cite[Ch. III]{Jh55}).
\end{proof}

\section{Layer potentials}\label{s:pot} 

Let $\alpha \in \mathopen]0,1[$, $q \in \mathbb{D}^+_n(\mathbb{R})$, $\eta \in \mathbb{R}^n$, and $k \in \mathbb{C}$. 
{For the definition and properties of functions and sets of the Schauder class $C^{m,\alpha}$, $m \in \mathbb{N}$, we refer to 
Gilbarg and Trudinger \cite{GiTr83}.}
Let $\Omega_Q$ be a bounded open subset of $\mathbb{R}^n$ of class $C^{1,\alpha}$  such that $\overline{\Omega_Q} \subseteq Q$.  We define the following two periodic open sets:
\[
\mathbb{S}_q[\Omega_Q] := \bigcup_{z \in \mathbb{Z}^n}(qz+\Omega_Q ), 
\qquad \mathbb{S}^-_q[\Omega_Q] := \mathbb{R}^n \setminus \overline{\mathbb{S}_q[\Omega_Q]}.
\]
{As we have done for functions defined on $\mathbb{R}^n$, a function $f$ from $\overline{\mathbb{S}_q[\Omega_Q]}$ or $\overline{\mathbb{S}^-_q[\Omega_Q]}$ is said to be $(Q,\eta)$-quasi-periodic if 
\[
f(x+qe_h)e^{-i\eta \cdot (x+qe_h)} = f(x) e^{-i\eta \cdot x} 
\]
for all $x$ in the domain of $f$ and for all $h \in \{1,\ldots,n\}$.}  We now introduce layer potentials   where the role of the standard fundamental solution is taken by $G^{k}_{q,\eta}$.
We start with the double layer potential. Let $\mu \in C^0(\partial\Omega_Q)$. The $(Q,\eta)$-quasi-periodic double layer potential for the Helmholtz equation is 
\[
\mathcal{D}^{k}_{q,\eta}[\partial\Omega_Q,\mu](x) := -\int_{\partial \Omega_Q}\nu_{\Omega_Q}(y) \cdot\nabla G^{k}_{q,\eta}(x-y)\mu(y)\,d\sigma_y \qquad \forall x \in \mathbb{R}^n.
\]
 Moreover, we set
\[
\mathcal{K}^{k}_{q,\eta}[\partial\Omega_Q,\mu] := \mathcal{D}^{k}_{q,\eta}[\partial\Omega_Q,\mu]_{|\partial\Omega_Q} \qquad \mbox{ on } \partial\Omega_Q.
\]
In the next proposition we collect some properties of the $(Q,\eta)$-quasi-periodic double layer potential in Schauder spaces. One may observe that these properties are the $(Q,\eta)$-quasi-periodic counterpart of the 
analog properties exhibited by the standard double layer potential. 

Another natural setting for potential theory would be that of Sobolev spaces. We opt for Schauder spaces because there is some advantage when dealing with nonlinear problems, as we do in Section \ref{s:singbvp}. This is due to the fact that Schauder spaces are Banach algebras and Sobolev spaces are not.

\begin{prop}\label{prop:dlp}
Let $\alpha \in \mathopen]0,1[$, $q \in \mathbb{D}^+_n(\mathbb{R})$, $\eta \in \mathbb{R}^n$, and $k \in \mathbb{C}$. Let $\Omega_Q$ be a bounded open subset of $\mathbb{R}^n$ of class $C^{1,\alpha}$  such that $\overline{\Omega_Q} \subseteq Q$.  
Let $\mu \in C^{1,\alpha}(\partial \Omega_Q)$. Then the following statements hold.
\begin{itemize}
\item[i)]  $\mathcal{D}^{k}_{q,\eta}[\partial\Omega_Q,\mu]$ is of class $C^\infty(\mathbb{S}_q[\Omega_Q] \cup \mathbb{S}^-_q[\Omega_Q])$ and
\begin{align*}
(\Delta&+k^2)\mathcal{D}^{k}_{q,\eta}[\partial\Omega_Q,\mu](x) \\
&= \frac{1}{|Q|_n}\sum_{z \in Z_{q,\eta}(k)}e^{ix\cdot (2\pi  q^{-1}z+\eta)}i(2\pi  q^{-1}z+\eta)\cdot \int_{\partial \Omega_Q}\nu_{\Omega_Q}(y)e^{-iy\cdot (2\pi  q^{-1}z+\eta)} \mu(y)\,d\sigma_y\\
&\hspace{9cm}\forall x \in \mathbb{S}_q[\Omega_Q] \cup \mathbb{S}^-_q[\Omega_Q].
\end{align*}
\item[ii)] $\mathcal{D}^{k}_{q,\eta}[\partial\Omega_Q,\mu]$ is $(Q,\eta)$-quasi-periodic.
\item[iii)] The restriction $\mathcal{D}^{k}_{q,\eta}[\partial\Omega_Q,\mu]_{|\mathbb{S}_q[\Omega_Q]}$ can be extended to a continuous function $\mathcal{D}^{k,+}_{q,\eta}[\partial\Omega_Q,\mu] \in C^{1,\alpha}(\overline{\mathbb{S}_q[\Omega_Q]})$ 
and the restriction $\mathcal{D}^{k}_{q,\eta}[\partial\Omega_Q,\mu]_{|\mathbb{S}^-_q[\Omega_Q]}$ can be extended to a continuous function $\mathcal{D}^{k,-}_{q,\eta}[\partial\Omega_Q,\mu] \in C^{1,\alpha}(\overline{\mathbb{S}^-_q[\Omega_Q]})$. Moreover
\begin{align} \label{jfd}
&\mathcal{D}^{k,\pm}_{q,\eta}[\partial\Omega_Q,\mu] = \pm\frac{1}{2}\mu+\mathcal{K}^{k}_{q,\eta}[\partial\Omega_Q,\mu]  \qquad \mbox{ on } \partial\Omega_Q,\\ \nonumber
&\nu_{\Omega_Q}\cdot \nabla \mathcal{D}^{k,+}_{q,\eta}[\partial\Omega_Q,\mu] -\nu_{\Omega_Q}\cdot \nabla \mathcal{D}^{k,-}_{q,\eta}[\partial\Omega_Q,\mu] =0  \qquad \mbox{ on } \partial\Omega_Q.
\end{align}
\item[iv)]  The map from $C^{1,\alpha}(\partial \Omega_Q)$ to $ C^{1,\alpha}(\overline{\mathbb{S}_q[\Omega_Q]})$ 
that takes $\mu$ to $\mathcal{D}^{k,+}_{q,\eta}[\partial\Omega_Q,\mu]$ and the map from $C^{1,\alpha}(\partial \Omega_Q)$ to $ C^{1,\alpha}(\overline{\mathbb{S}^-_q[\Omega_Q]})$ 
that takes $\mu$ to $\mathcal{D}^{k,-}_{q,\eta}[\partial\Omega_Q,\mu]$ are linear and continuous.
\item[v)] The map that takes $\mu \in C^{1,\alpha}(\partial\Omega_Q)$ to $\mathcal{K}^{k}_{q,\eta}[\partial\Omega_Q,\mu]$ is a compact operator from  $C^{1,\alpha}(\partial\Omega_Q)$ 
to itself.
\end{itemize}
\end{prop}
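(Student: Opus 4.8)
The plan is to reduce every assertion to the corresponding property of the \emph{non-periodic} double layer potential for $\Delta+k^2$, by means of the splitting of the quasi-periodic fundamental solution provided by Proposition~\ref{qnSR}. Fix a classical fundamental solution $\mathbf{G}$ of $\Delta+k^2$ in $\mathbb{R}^n$, say $\mathbf{G}:=S_n(\cdot,k)$ of Proposition~\ref{prop:hlmfs}, and write $G^{k}_{q,\eta}=\mathbf{G}+R_{\mathbf{G}}$ as in Proposition~\ref{qnSR}. Since $\overline{\Omega_Q}\subseteq Q$, the compact set $\overline{\Omega_Q}-\overline{\Omega_Q}$ lies in the open box $Q-Q=\prod_{j=1}^{n}\mathopen]-q_{jj},q_{jj}\mathclose[$, which meets $q\mathbb{Z}^n$ only at $0$; hence $R_{\mathbf{G}}$ is real analytic on the open neighbourhood $Q-Q$ of $\overline{\Omega_Q}-\overline{\Omega_Q}$, and $Q-Q$ also contains $\overline{Q}-\partial\Omega_Q$. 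Consequently the ``remainder operator''
\[
\mathcal{D}_{R}[\mu](x):=-\int_{\partial\Omega_Q}\nu_{\Omega_Q}(y)\cdot\nabla R_{\mathbf{G}}(x-y)\,\mu(y)\,d\sigma_y
\]
has a real analytic kernel for $x$ in a neighbourhood of $\overline{Q}$ and $y\in\partial\Omega_Q$, while the classical double layer potential $\mathcal{D}_{\mathbf{G}}[\mu]$, defined by the same formula with $\mathbf{G}$ in place of $R_{\mathbf{G}}$, satisfies the analogues of (i)--(v) with $\Omega_Q$ and $\mathbb{R}^n\setminus\overline{\Omega_Q}$ in place of $\mathbb{S}_q[\Omega_Q]$ and $\mathbb{S}^-_q[\Omega_Q]$; these are the known properties of the Helmholtz double layer potential on a bounded $C^{1,\alpha}$ set, for which I would refer to Lanza de Cristoforis and Rossi~\cite{LaRo08} and to~\cite{DaLaMu21}. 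On a neighbourhood of $\partial\Omega_Q$ one then has $\mathcal{D}^{k}_{q,\eta}[\partial\Omega_Q,\mu]=\mathcal{D}_{\mathbf{G}}[\mu]+\mathcal{D}_{R}[\mu]$.

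For (i) and (ii) I would argue directly on the generalized series. By Proposition~\ref{qnSR}(ii), the function $(x,y)\mapsto G^{k}_{q,\eta}(x-y)$ is real analytic for $x\in\mathbb{S}_q[\Omega_Q]\cup\mathbb{S}^-_q[\Omega_Q]$ and $y\in\partial\Omega_Q$ (indeed $x-y\notin q\mathbb{Z}^n$, otherwise $x$ would lie on $\partial\mathbb{S}_q[\Omega_Q]$), so one may differentiate under the integral sign to conclude that $\mathcal{D}^{k}_{q,\eta}[\partial\Omega_Q,\mu]$ is real analytic, in particular of class $C^{\infty}$, on $\mathbb{S}_q[\Omega_Q]\cup\mathbb{S}^-_q[\Omega_Q]$; applying $\Delta+k^2$ under the integral sign and using \eqref{qpfund1} — where off $\partial\mathbb{S}_q[\Omega_Q]$ the Dirac masses drop out and only the finite exponential sum indexed by $Z_{q,\eta}(k)$ survives — yields the formula in (i). Statement (ii) is immediate from the $(Q,\eta)$-quasi-periodicity of $G^{k}_{q,\eta}$ in Proposition~\ref{qpfund}: translating $x$ by $qe_h$ multiplies $\nabla G^{k}_{q,\eta}(x-y)$, hence $\mathcal{D}^{k}_{q,\eta}[\partial\Omega_Q,\mu](x)$, by $e^{i\eta\cdot qe_h}$.

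For (iii)--(v) I would first work on a neighbourhood of $\partial\Omega_Q$. The real analytic kernel of $\mathcal{D}_R$ shows that $\mathcal{D}_R[\mu]$ extends to a real analytic — a fortiori $C^{1,\alpha}$ — function on a full neighbourhood of $\overline{Q}$, depending linearly and continuously on $\mu\in C^{1,\alpha}(\partial\Omega_Q)$, with $\mathcal{D}_R[\mu]|_{\partial\Omega_Q}$ defining a compact (indeed smoothing) operator on $C^{1,\alpha}(\partial\Omega_Q)$. Adding the jump relations and mapping properties of $\mathcal{D}_{\mathbf{G}}$, one gets that $\mathcal{D}^{k}_{q,\eta}[\partial\Omega_Q,\mu]$ restricted to $\Omega_Q$, respectively to $Q\setminus\overline{\Omega_Q}$, extends $C^{1,\alpha}$-ly up to $\partial\Omega_Q$ from either side, that \eqref{jfd} holds (the $\mathcal{D}_R$ contribution is $C^{1}$ across $\partial\Omega_Q$, so it cancels in both jumps), and that $\mathcal{K}^{k}_{q,\eta}[\partial\Omega_Q,\cdot]=\mathcal{K}_{\mathbf{G}}[\cdot]+\mathcal{D}_R[\cdot]|_{\partial\Omega_Q}$ is a sum of two compact operators on $C^{1,\alpha}(\partial\Omega_Q)$, which is (v). To reach the full unbounded periodic sets in (iii) and (iv), I would invoke (ii): near $qz+\partial\Omega_Q$ the potential equals $e^{i\eta\cdot qz}$ times the $qz$-translate of its behaviour near $\partial\Omega_Q$, so it extends $C^{1,\alpha}$-ly with a norm independent of $z$; away from $\partial\mathbb{S}_q[\Omega_Q]$ it is $(Q,\eta)$-quasi-periodic and $C^{\infty}$ by (i)--(ii), hence bounded with bounded and uniformly Hölder continuous derivatives. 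Gluing these pieces gives the extensions $\mathcal{D}^{k,\pm}_{q,\eta}[\partial\Omega_Q,\mu]\in C^{1,\alpha}(\overline{\mathbb{S}^{\pm}_q[\Omega_Q]})$ and the continuity of $\mu\mapsto\mathcal{D}^{k,\pm}_{q,\eta}[\partial\Omega_Q,\mu]$ in (iv).

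I expect the main difficulty to be organizational rather than a single sharp estimate: one must use $G^{k}_{q,\eta}=\mathbf{G}+R_{\mathbf{G}}$ only where $R_{\mathbf{G}}$ is regular (secured by $\overline{\Omega_Q}\subseteq Q$), and then assemble the purely local statements near $\partial\Omega_Q$ into global $C^{1,\alpha}$ statements on the unbounded — in the exterior case connected — periodic sets, which is precisely where the quasi-periodicity (ii) and the uniform-in-$z$ bounds are essential. A secondary point calling for some care is that one needs the compactness of the \emph{classical} Helmholtz double layer operator on $C^{1,\alpha}(\partial\Omega_Q)$ for a merely $C^{1,\alpha}$ set; this is available in the references cited above and I would take it for granted.
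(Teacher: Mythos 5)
Your proposal is correct and follows essentially the same route as the paper's proof: decompose $G^{k}_{q,\eta}=S_n(\cdot,k)+R_{S_n(\cdot,k)}$ via Proposition~\ref{qnSR}, transfer the jump relations and $C^{1,\alpha}$ mapping properties from the classical Helmholtz double layer potential of Lanza de Cristoforis--Rossi to the quasi-periodic one by treating the remainder as a smoothing integral operator with real analytic kernel near $\overline{Q}$, and then use the $(Q,\eta)$-quasi-periodicity to pass from the single cell to the full periodic sets $\overline{\mathbb{S}_q^{\pm}[\Omega_Q]}$. The only cosmetic difference is that the paper introduces an explicit smooth neighbourhood $A\supseteq\overline{Q}$ with $\overline{A}\cap(qz+\overline{\Omega_Q})=\emptyset$ for $z\neq 0$ where you instead argue with the set $Q-Q$ and a perturbation of it; both serve the same purpose of guaranteeing $x-y\notin q\mathbb{Z}^n\setminus\{0\}$ where needed.
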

\begin{proof}
First, we note that 
 \[
 x-y \notin q\mathbb{Z}^n \qquad \forall\,(x,y) \in (\mathbb{R}^n \setminus \partial \mathbb{S}_q[\Omega_Q])\times \partial \Omega_Q.
 \]
 Indeed, if we assume for the sake of contradiction that  
 $(x,y) \in (\mathbb{R}^n \setminus \partial \mathbb{S}[\Omega_Q])\times \partial \Omega_Q$ and $x-y \in q\mathbb{Z}^n$, then we can deduce that
 $x \in \partial \Omega_Q + q\mathbb{Z}^n = \partial \mathbb{S}[\Omega_Q]$, contrary to our assumption on $x$. 
Then statement i) is a consequence of classical differentiation theorems for integrals depending on a parameter and of {Propositions \ref{qpfund} and \ref{qnSR}}.

Next we consider statement ii). Since $G^{k}_{q,\eta}$ is $(Q,\eta)$-quasi-periodic, it is easily seen that so it is $\nabla G^{k}_{q,\eta}$. Indeed for all $ x \in \mathbb{R}^n \setminus {q\mathbb{Z}^n}$ one has
\begin{align*}
e^{-i\eta\cdot x}\nabla G^{k}_{q,\eta}(x) &= e^{-i\eta\cdot x}\nabla (G^{k}_{q,\eta}(x) e^{-i\eta\cdot x}e^{i\eta\cdot x}) \\
&=
\nabla (G^{k}_{q,\eta}(x) e^{-i\eta\cdot x}) +i\eta G^{k}_{q,\eta}(x)e^{-i\eta\cdot x}
\end{align*}
and both the terms in the right hand side of the above formula are periodic with respect to $Q$. As a consequence,   $\mathcal{D}^{k}_{q,\eta}[\partial\Omega_Q,\mu]$ is $(Q,\eta)$-quasi-periodic.

Next we pass to prove statement   iii). We  apply  Proposition \ref{qnSR} where we chose the fundamental 
solution of the Helmholtz operator $\mathbf{G}$ to be the one denoted by $S_n(\cdot,k)$ and introduced in  Subsection \ref{s:holfundsol} (see
Lanza de Cristoforis and Rossi \cite[Prop. 3.3]{LaRo08}).
With this choice:
\begin{align}\label{dlp1}
\mathcal{D}^{k}_{q,\eta}[\partial\Omega_Q,\mu](x) = \mathcal{D}^{k}[\partial\Omega_Q,\mu](x) -\int_{\partial \Omega_Q}\nu_{\Omega_Q}(y) \cdot\nabla &R_{S_n(\cdot,k)}(x-y)\mu(y)\,d\sigma_y\\ \nonumber
& \qquad \forall x \in \mathbb{R}^n,
\end{align}
where $\mathcal{D}^{k}[\partial\Omega_Q,\mu]$ is the double layer potential constructed with the fundamental solution 
$S_n(\cdot,k)$  and $R_{S_n(\cdot,k)}$ is the map defined in Proposition \ref{qnSR} with the choice $\mathbf{G} =S_n(\cdot,k)$. 
As it is well known, the restriction $\mathcal{D}^{k}[\partial\Omega_Q,\mu]_{|\Omega_Q}$ can be extended to a continuous function $\mathcal{D}^{k,+}[\partial\Omega,\mu] \in C^{1,\alpha}(\overline{ \Omega_Q})$ 
and the restriction $\mathcal{D}^{k}[\partial\Omega_Q,\mu]_{|\Omega_Q^-}$ can be extended to a continuous function $\mathcal{D}^{k,-}[\partial\Omega_Q,\mu] \in C_{\mathrm{loc}}^{1,\alpha}(\overline{\Omega_Q^-})$  (see e.g. Lanza de Cristoforis and Rossi \cite[Thm 3.4]{LaRo08}). 
We now take $A$ to be a bounded open subset of $\mathbb{R}^n$ of class $C^\infty$ such that 
\[
\overline{Q} \subseteq A, \qquad \overline{A} \cap (qz+\overline{\Omega_Q}) =\emptyset \qquad \forall z \in \mathbb{Z}^n \setminus\{0\}.
\] 
We moreover set 
\[
B := A \setminus \overline{\Omega_Q}.
\]
We first note that if $x \in \overline{A}$ and $y \in \partial \Omega_Q$, then 
\[
x-y \notin q\mathbb{Z}^n\setminus\{0\}.
\]
Indeed if by contradiction  $x-y \in  q\mathbb{Z}^n\setminus\{0\}$, then $x \in \partial \Omega_Q +  (q\mathbb{Z}^n\setminus\{0\})$
and thus there exists $z \in  \mathbb{Z}^n\setminus\{0\}$ such that $\overline{A} \cap (qz+\partial \Omega_Q) \neq \emptyset $ which cannot be.
Thus, since by Proposition \ref{qnSR} i) the map $R_{S_n(\cdot,k)}$ is real analytic in $(\mathbb{R}^n\setminus q\mathbb{Z}^n) \cup \{0\}$, the second term in 
the right hand side of \eqref{dlp1} is of class $C^\infty(A)$.  Then 
\begin{align*}
\mathcal{D}^{k}_{q,\eta}[\partial\Omega_Q,\mu](x) = \mathcal{D}^{k,+}[\partial\Omega_Q,\mu](x) -\int_{\partial \Omega_Q}\nu_{\Omega_Q}(y) \cdot\nabla &R_{S_n(\cdot,k)}(x-y)\mu(y)\,{d\sigma_y}\\ \nonumber
& \qquad \forall x \in \Omega_Q,\\
\mathcal{D}^{k}_{q,\eta}[\partial\Omega_Q,\mu](x) = \mathcal{D}^{k,-}[\partial\Omega_Q,\mu](x) -\int_{\partial \Omega_Q}\nu_{\Omega_Q}(y) \cdot\nabla &R_{S_n(\cdot,k)}(x-y)\mu(y)\,{d\sigma_y}\\ \nonumber
& \qquad \forall x \in B.
\end{align*}
Since the right hand side of the above equations define respectively two functions in $C^{1,\alpha}(\overline{\Omega_Q})$ and 
$C^{1,\alpha}(\overline{B})$, it is readily seen that  $\mathcal{D}^{k}_{q,\eta}[\partial\Omega_Q,\mu]_{|\mathbb{S}_q[\Omega_Q]}$ can be extended to a continuous function $\mathcal{D}^{k,+}_{q,\eta}[\partial\Omega_Q,\mu] \in C^{1,\alpha}(\overline{\mathbb{S}_q[\Omega_Q]})$ 
and $\mathcal{D}^{k}_{q,\eta}[\partial\Omega_Q,\mu]_{|\mathbb{S}^-_q[\Omega_Q]}$ can be extended to a continuous function $\mathcal{D}^{k,-}_{q,\eta}[\partial\Omega_Q,\mu] \in C^{1,\alpha}(\overline{\mathbb{S}^-_q[\Omega_Q]})$. Note that a $(Q,\eta)$-quasi-periodic function is completely determined by its behavior on a single cell. The formulas in \eqref{jfd} follow by the corresponding  classical formulas for $\mathcal{D}^{k}[\partial\Omega_Q,\mu]$ (see 
 Lanza de Cristoforis and Rossi \cite[Thm 3.4]{LaRo08}).
 
 Statements iv) and v) similarly follow  from formula \eqref{dlp1}, from the mapping properties of $\mathcal{D}^{k}[\partial\Omega,\cdot]$  {for the first term in the right hand side of  \eqref{dlp1}}
 (see e.g. Lanza de Cristoforis and Rossi \cite{LaRo08}){,} and from the mapping properties of integral operators with a smooth kernel {for the second term in  \eqref{dlp1}} (see, e.g., \cite{LaMu13}). 
\end{proof}

Then we pass to the single layer potential.  Let $\mu \in C^0(\partial\Omega_Q)$. The $(Q,\eta)$-quasi-periodic single layer potential for the Helmholtz equation is 
\[
\mathcal{S}^{k}_{q,\eta}[\partial\Omega_Q,\mu](x) := \int_{\partial \Omega_Q}  G^{k}_{q,\eta}(x-y)\mu(y)\,d\sigma_y \qquad \forall x \in \mathbb{R}^n.
\]
Moreover we set 
\[
\left(\mathcal{K}^{k}_{q,\eta}\right)^*[\partial\Omega_Q,\mu](x) := \int_{\partial \Omega_Q}\nu_{\Omega_Q}(x) \cdot\nabla G^{k}_{q,\eta}(x-y)\mu(y)\,d\sigma_y \qquad \forall x \in \partial\Omega_Q.
\]
The proof of the following properties of $\mathcal{S}^{k}_{q,\eta}[\partial\Omega_Q,\mu]$ follows the lines of the proof of the previous Proposition \ref{prop:dlp}, that is it uses Proposition \ref{qnSR} together with the known properties of the single layer potential associated with the fundamental solution $S_n(\cdot,k)$.  
\begin{prop}\label{prop:slp}
Let $\alpha \in \mathopen]0,1[$, $q \in \mathbb{D}^+_n(\mathbb{R})$, $\eta \in \mathbb{R}^n$, and $k \in \mathbb{C}$.  Let $\Omega_Q$ be a bounded open subset of $\mathbb{R}^n$ of class $C^{1,\alpha}$  such that $\overline{\Omega_Q} \subseteq Q$.  
Let $\mu \in C^{0,\alpha}(\partial \Omega_Q)$. Then the following statements hold.
\begin{itemize}
\item[i)]  $\mathcal{S}^{k}_{q,\eta}[\partial\Omega_Q,\mu]$ is of class $C^\infty(\mathbb{S}_q[\Omega_Q] \cup \mathbb{S}^-_q[\Omega_Q])$ and
\begin{align*}
(\Delta&+k^2)\mathcal{S}^{k}_{q,\eta}[\partial\Omega_Q,\mu](x) \\
&= -\frac{1}{|Q|_n}\sum_{z \in Z_{q,\eta}(k)}e^{ix\cdot (2\pi  q^{-1}z+\eta)} \int_{\partial \Omega_Q}e^{-iy\cdot (2\pi  q^{-1}z+\eta)} \mu(y)\,d\sigma_y\\
&\hspace{9cm}\forall x \in \mathbb{S}_q[\Omega_Q] \cup \mathbb{S}^-_q[\Omega_Q].
\end{align*}
\item[ii)] $\mathcal{S}^{k}_{q,\eta}[\partial\Omega_Q,\mu]$ is {$(Q,\eta)$-quasi-periodic}.
\item[iii)] $\mathcal{S}^{k}_{q,\eta}[\partial\Omega_Q,\mu]$ is continuous in $\mathbb{R}^n$ and 
 \begin{align*}
&\mathcal{S}^{k,+}_{q,\eta}[\partial\Omega_Q,\mu] := \mathcal{S}^{k}_{q,\eta}[\partial\Omega_Q,\mu]_{|\overline{\mathbb{S}_q[\Omega_Q]}} \in C^{1,\alpha}(\overline{\mathbb{S}_q[\Omega_Q]}),\\
 &\mathcal{S}^{k,-}_{q,\eta}[\partial\Omega_Q,\mu] := \mathcal{S}^{k}_{q,\eta}[\partial\Omega_Q,\mu]_{|\overline{\mathbb{S}^-_q[\Omega_Q]}} \in C^{1,\alpha}(\overline{\mathbb{S}^-_q[\Omega_Q]}).
 \end{align*}
  Moreover:
\begin{align} \label{jfs}
&\nu_{\Omega_Q}(x) \cdot \nabla\mathcal{S}^{k,\pm}_{q,\eta}[\partial\Omega_Q,\mu](x) = \mp\frac{1}{2}\mu(x)+\left(\mathcal{K}^{k}_{q,\eta}\right)^*[\partial\Omega_Q,\mu](x) \qquad \forall x \in \partial\Omega_Q.
\end{align}
\item[iv)]  The map from $C^{0,\alpha}(\partial \Omega_Q)$ to $ C^{1,\alpha}(\overline{\mathbb{S}_q[\Omega_Q]})$ 
that takes $\mu$ to $\mathcal{S}^{k,+}_{q,\eta}[\partial\Omega_Q,\mu]$ and the map from $C^{0,\alpha}(\partial \Omega_Q)$ to $ C^{1,\alpha}(\overline{\mathbb{S}^-_q[\Omega_Q]})$ 
that takes $\mu$ to $\mathcal{S}^{k,-}_{q,\eta}[\partial\Omega_Q,\mu]$ are linear and continuous.
\item[v)] The map that takes $\mu \in C^{0,\alpha}(\partial\Omega_Q)$ to $\left(\mathcal{K}^{k}_{q,\eta}\right)^*[\partial\Omega_Q,\mu]$ is a compact operator from  $C^{0,\alpha}(\partial\Omega_Q)$ to itself.
\end{itemize}
\end{prop}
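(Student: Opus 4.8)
The plan is to follow the scheme of the proof of Proposition \ref{prop:dlp}, reducing each assertion to the corresponding classical property of the single layer potential built from the fundamental solution $S_n(\cdot,k)$ of Proposition \ref{prop:hlmfs}, plus an elementary analysis of a remainder operator with real analytic kernel. By Proposition \ref{qnSR} i) applied with $\mathbf{G}=S_n(\cdot,k)$ we write $G^k_{q,\eta}=S_n(\cdot,k)+R_{S_n(\cdot,k)}$, where $R_{S_n(\cdot,k)}$ is real analytic on $(\mathbb{R}^n\setminus q\mathbb{Z}^n)\cup\{0\}$, so that
\[
\mathcal{S}^k_{q,\eta}[\partial\Omega_Q,\mu](x)=\mathcal{S}^k[\partial\Omega_Q,\mu](x)+\int_{\partial\Omega_Q}R_{S_n(\cdot,k)}(x-y)\mu(y)\,d\sigma_y\qquad\forall x\in\mathbb{R}^n,
\]
with $\mathcal{S}^k[\partial\Omega_Q,\cdot]$ the (non-periodic) single layer potential associated with $S_n(\cdot,k)$. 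Exactly as in the proof of Proposition \ref{prop:dlp}, one records that $x-y\notin q\mathbb{Z}^n$ for every $x\in\mathbb{R}^n\setminus\partial\mathbb{S}_q[\Omega_Q]$ and $y\in\partial\Omega_Q$, and one fixes a bounded open set $A$ of class $C^\infty$ with $\overline{Q}\subseteq A$ and $\overline{A}\cap(qz+\overline{\Omega_Q})=\emptyset$ for all $z\in\mathbb{Z}^n\setminus\{0\}$; then $x-y\notin q\mathbb{Z}^n\setminus\{0\}$ for $x\in\overline{A}$, $y\in\partial\Omega_Q$, so the kernel $R_{S_n(\cdot,k)}(x-y)$ is real analytic on $\overline{A}\times\partial\Omega_Q$ and the remainder integral in the display above is of class $C^\infty(A)$.

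With this decomposition, statement i) follows from the corresponding smoothness of $\mathcal{S}^k[\partial\Omega_Q,\mu]$ off $\partial\Omega_Q$ together with the $C^\infty(A)$ regularity of the remainder term (and quasi-periodicity to pass from $A$ to all of $\mathbb{S}_q[\Omega_Q]\cup\mathbb{S}^-_q[\Omega_Q]$); to identify $(\Delta+k^2)\mathcal{S}^k_{q,\eta}[\partial\Omega_Q,\mu]$ one differentiates under the integral sign and uses Proposition \ref{qpfund}, observing that the Dirac masses $\sum_{z\in\mathbb{Z}^n}\delta_{qz}e^{iqz\cdot\eta}$ are supported in $q\mathbb{Z}^n$ and hence do not contribute when $x\notin\partial\mathbb{S}_q[\Omega_Q]$, so that only the finite exponential sum over $Z_{q,\eta}(k)$ survives, yielding the stated formula. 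Statement ii) is immediate from the $(Q,\eta)$-quasi-periodicity of $G^k_{q,\eta}$ (Proposition \ref{qpfund}) and the linearity of the integral.

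For statement iii), the classical theory of the single layer potential for $S_n(\cdot,k)$ (see Lanza de Cristoforis and Rossi \cite{LaRo08}) gives that $\mathcal{S}^k[\partial\Omega_Q,\mu]$ is continuous on $\mathbb{R}^n$, that its restriction to $\overline{\Omega_Q}$ is of class $C^{1,\alpha}$ and its restriction to the closure of $\mathbb{R}^n\setminus\overline{\Omega_Q}$ is of class $C^{1,\alpha}_{\mathrm{loc}}$, and that it satisfies the jump relation $\nu_{\Omega_Q}\cdot\nabla\mathcal{S}^{k,\pm}[\partial\Omega_Q,\mu]=\mp\frac12\mu+(\mathcal{K}^k)^*[\partial\Omega_Q,\mu]$ on $\partial\Omega_Q$; since $R_{S_n(\cdot,k)}(x-y)$ is real analytic near the diagonal, the remainder term is of class $C^\infty$ up to $\partial\Omega_Q$ from either side and contributes no jump in the normal derivative. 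Writing the potential on $\Omega_Q$ and on $B:=A\setminus\overline{\Omega_Q}$ as the sum of a $C^{1,\alpha}$ function and a $C^\infty$ function, extending by continuity, using that a $(Q,\eta)$-quasi-periodic function is determined by its values on one cell, and noting that $(\mathcal{K}^k_{q,\eta})^*$ differs from $(\mathcal{K}^k)^*$ by the boundary trace of a smooth-kernel operator, one obtains the $C^{1,\alpha}$ one-sided restrictions and formula \eqref{jfs}.

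Finally, statements iv) and v) follow by the same splitting: the maps $\mu\mapsto\mathcal{S}^{k,\pm}[\partial\Omega_Q,\mu]$ are linear and continuous between the relevant Schauder spaces by the classical theory, and $\mu\mapsto(\mathcal{K}^k)^*[\partial\Omega_Q,\mu]$ is compact on $C^{0,\alpha}(\partial\Omega_Q)$ because it is an integral operator with weakly singular kernel on the $C^{1,\alpha}$ boundary; the remainder operator $\mu\mapsto\int_{\partial\Omega_Q}R_{S_n(\cdot,k)}(\cdot-y)\mu(y)\,d\sigma_y$ has a smooth kernel, hence is linear and continuous from $C^{0,\alpha}(\partial\Omega_Q)$ into $C^{1,\alpha}(\overline{\mathbb{S}_q[\Omega_Q]})$ and into $C^{1,\alpha}(\overline{\mathbb{S}^-_q[\Omega_Q]})$, and its boundary trace is compact on $C^{0,\alpha}(\partial\Omega_Q)$ (see, e.g., \cite{LaMu13}). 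I expect the only genuinely delicate point to be the same as in Proposition \ref{prop:dlp}: setting up the auxiliary set $A$ and checking that $x-y$ avoids $q\mathbb{Z}^n$ (resp. $q\mathbb{Z}^n\setminus\{0\}$) on the appropriate sets, so that the remainder kernel is honestly real analytic there, and then keeping track of the reduction from the periodic sets $\mathbb{S}^{\pm}_q[\Omega_Q]$ to a single periodicity cell via quasi-periodicity; once this is in place, everything else is a transcription of the classical single layer potential theory.
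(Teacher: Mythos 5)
Your proof is correct and follows essentially the same approach as the paper, which at this point simply says that the proof of Proposition \ref{prop:slp} "follows the lines of the proof of Proposition \ref{prop:dlp}, that is it uses Proposition \ref{qnSR} together with the known properties of the single layer potential associated with the fundamental solution $S_n(\cdot,k)$." You have faithfully reproduced that argument: decomposing $G^k_{q,\eta}=S_n(\cdot,k)+R_{S_n(\cdot,k)}$, setting up the auxiliary set $A$ to verify $x-y\notin q\mathbb{Z}^n\setminus\{0\}$, invoking \cite{LaRo08} for the classical single layer theory and \cite{LaMu13} for the smooth-kernel remainder, and using quasi-periodicity to pass from one cell to the full periodic sets, with the correct sign in i) coming from the absence of the extra minus sign present in the double layer definition.
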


\section{Basic boundary value problems}\label{s:qperbvp}
Throughout this section we fix $\alpha \in \mathopen]0,1[$, $q \in \mathbb{D}^+_n(\mathbb{R})$, $\eta \in \mathbb{R}^n$, and $k \in \mathbb{C}$.  Moreover, we fix a bounded open subset
$\Omega_Q$   of $\mathbb{R}^n$ of class $C^{1,\alpha}$  such that $\overline{\Omega_Q} \subseteq Q$.

We start recalling some known facts regarding the spectrum of the Laplacian. 
It is well-known that the spectrum of the Laplacian $-\Delta$ acting on functions which are periodic with respect to $Q$ can be seen as the spectrum of the Laplacian on the 
flat torus $\mathbb{\mathbb{R}}^n / q\mathbb{Z}^n$, it
 is made of eigenvalues, and it is given by 
\[
\sigma_{q,0}(-\Delta)  := \left\{|2\pi q^{-1}z|^2 : z \in \mathbb{Z}^n\right\}.
\]
Moreover, if $\lambda \in \sigma_{q,0}(-\Delta) $ its multiplicity coincides with 
\[
\#\left\{z \in \mathbb{Z}^n: \lambda = |2\pi q^{-1}z|^2\right\}.
\]
For more details, we refer to, e.g.,   Chavel \cite[Ch. II]{Ch84}.

Similarly, the spectrum of the  Laplacian $-\Delta$ acting on $(Q,\eta)$-quasi-periodic functions is made of eigenvalues{, is} given by
\[
\sigma_{q,\eta}(-\Delta) := \left\{|2\pi q^{-1}z+\eta|^2 : z \in \mathbb{Z}^n\right\}{,}
\]
and if $\lambda \in\sigma_{q,\eta}(-\Delta)$ its multiplicity coincides with 
\[
\#\left\{z \in \mathbb{Z}^n: \lambda = |2\pi q^{-1}z+\eta|^2\right\}.
\]
In other words, if $k \in \mathbb{C}$ then $k^2 \in \sigma_{q,\eta}(-\Delta)$ if and only if $Z_{q,\eta}(k) \neq \emptyset$.

Next we consider the spectrum of the  Dirichlet Laplacian on $(Q,\eta)$-quasi-periodic functions of $\mathbb{S}^-_q[\Omega_Q]$. Namely, we say that
$\lambda \in \mathbb{C}$ is a Dirichlet  $(Q,\eta)$-quasi-periodic eigenvalue of $-\Delta$ on $\mathbb{S}^-_q[\Omega_Q]$, and we write 
$\lambda \in \sigma^D_{q,\eta}(-\Delta, \mathbb{S}^-_q[\Omega_Q]) $, if there exists a non-zero solution $u$ of 
\begin{equation*}
\begin{cases}
-\Delta u = \lambda u \qquad &\mbox{ in } \mathbb{S}^-_q[\Omega_Q],\\
u \, \mbox{is $(Q,\eta)$-quasi-periodic},\\
u=0 \qquad &\mbox{ on } \partial\Omega_Q.
\end{cases}
\end{equation*}
By classical spectral theory, the spectrum is discrete and made of real positive eigenvalues  of finite multiplicity that can be arranged in a diverging sequence.
 Similarly, we say that
$\lambda \in \mathbb{C}$ is a Neumann  $(Q,\eta)$-quasi-periodic eigenvalue of $-\Delta$ on $\mathbb{S}^-_q[\Omega]$, and we write 
$\lambda \in \sigma^N_{q,\eta}(-\Delta, \mathbb{S}^-_q[\Omega_Q])$, if there exists a non-zero solution $u$ of 
\begin{equation*}
\begin{cases}
-\Delta u = \lambda u \qquad &\mbox{ in } \mathbb{S}^-_q[\Omega_Q],\\
u \, \mbox{is $(Q,\eta)$-quasi-periodic},\\
\partial_{\nu_{\Omega_Q}} u=0 \qquad &\mbox{ on } \partial\Omega_Q.
\end{cases}
\end{equation*}
Again, by classical spectral theory, the spectrum is discrete and made of real {non-negative} eigenvalues of finite multiplicity that can be arranged in a diverging sequence.
Finally, we respectively denote by $\sigma^D(-\Delta, \Omega_Q)$ and $\sigma^N(-\Delta, \Omega_Q)$ the set 
of Dirichlet and Neumann eigenvalues of the Laplacian on {$\Omega_Q$}.

Now we pass to consider some basic boundary value problems for the $(Q,\eta)$-quasi-periodic Helmholtz equation, i.e., boundary value problems for the  Helmholtz equation with $(Q,\eta)$-quasi-periodicity conditions. More precisely, we consider the Dirichlet and Neumann problems.

In the following sections we will always assume that $k^2 \notin \sigma_{q,\eta}(-\Delta)$, or, equivalently,  $Z_{q,\eta}(k) = \emptyset$. So, we will always have that the double and single  layer potentials solve the Helmholtz equation. That is,
\[
\begin{aligned}
&(\Delta+k^2)\mathcal{D}^{k}_{q,\eta}[\partial\Omega_Q,\phi](x)=0\\
&(\Delta+k^2)\mathcal{S}^{k}_{q,\eta}[\partial\Omega_Q,\psi](x)=0 
\end{aligned}
\] 
for all $x\in \mathbb{S}_q[\Omega_Q] \cup \mathbb{S}^-_q[\Omega_Q]$, $\phi\in C^{1,\alpha}(\partial \Omega_Q)$, and $\psi\in C^{0,\alpha}(\partial \Omega_Q)$ (cf.~Propositions \ref{prop:dlp} and \ref{prop:slp}).

\subsection{Dirichlet problem}
Let $g \in C^{1,\alpha}(\partial \Omega_Q)$ and $k \in \mathbb{C}$. In this subsection we consider the Dirichlet problem
\begin{equation}\label{pb:dir}
\begin{cases}
\Delta u + k^2 u =0\qquad &\mbox{ in } \mathbb{S}^-_q[\Omega_Q],\\
u \, \mbox{is $(Q,\eta)$-quasi-periodic},\\
u=g \qquad &\mbox{ on } \partial\Omega_Q.
\end{cases}
\end{equation}
In the next theorem we show how to solve problem \eqref{pb:dir} and how the solution can be represented 
by means of layer potentials under suitable assumptions on the wave number $k^2$. To this aim, we find convenient to set 
\[
A(k) := \begin{cases}
1 \quad \mbox{ if } k^2  \in \sigma^N(-\Delta, \Omega_Q),\\
0 \quad \mbox{ otherwise }.
\end{cases}
\] 

\begin{thm}\label{thm:dirpb}
Let $\alpha \in \mathopen]0,1[$, $q \in \mathbb{D}^+_n(\mathbb{R})$, and $\eta \in \mathbb{R}^n$.
Let $\Omega_Q$ be a bounded open subset of $\mathbb{R}^n$ of class $C^{1,\alpha}$  such that $\overline{\Omega_Q} \subseteq Q$.  
Let {$k \in \mathbb{C}$ be such} that $k^2 \notin\sigma_{q,\eta}(-\Delta)$, 
$k^2 \notin \sigma^D_{q,\eta}(-\Delta, \mathbb{S}^-_q[\Omega_Q])$. Then the following statements hold.
\begin{itemize}
\item[i)] The integral operator $\mathcal{T}$ from $C^{1,\alpha}(\partial\Omega_Q)$ to itself defined by
\[
\mathcal{T} := -\frac{1}{2}\mathbb{I} + \mathcal{K}^{k}_{q,\eta}[\partial\Omega_Q,\cdot] +i A(k)\mathcal{S}^{k}_{q,\eta}[\partial\Omega_Q,\cdot]_{|\partial\Omega_Q}\,,
\]
where $\mathbb{I}$ is the identity operator on  $C^{1,\alpha}(\partial\Omega_Q)$,  is a linear homeomorphism.
\item[ii)] Let $g \in C^{1,\alpha}(\partial \Omega_Q)$. Then problem \eqref{pb:dir} admits a unique solution
 $u \in C^{1,\alpha}(\overline{\mathbb{S}^-_q[\Omega_Q]})$. Moreover
 \[
 u = \mathcal{D}^{k,-}_{q,\eta}[\partial\Omega_Q,\mu] +i A(k)\mathcal{S}^{k,-}_{q,\eta}[\partial\Omega_Q,\mu]\, ,
 \]
 where
 \[
 \mu = \mathcal{T}^{(-1)}[g].
 \]
\end{itemize}
\end{thm}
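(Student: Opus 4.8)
The plan is to reduce the quasi-periodic Dirichlet problem to an integral equation on $\partial\Omega_Q$ and to exploit the Fredholm theory together with the assumed spectral conditions. For statement i), I would argue that $\mathcal{T}$ is a compact perturbation of $-\frac12\mathbb{I}$: by Proposition \ref{prop:dlp}(v) the operator $\mathcal{K}^{k}_{q,\eta}[\partial\Omega_Q,\cdot]$ is compact on $C^{1,\alpha}(\partial\Omega_Q)$, and by Proposition \ref{prop:slp} the trace $\mathcal{S}^{k}_{q,\eta}[\partial\Omega_Q,\cdot]_{|\partial\Omega_Q}$ maps $C^{1,\alpha}(\partial\Omega_Q)$ (indeed even $C^{0,\alpha}$) continuously into $C^{1,\alpha}(\partial\Omega_Q)$ and, composed with the compact embedding of $C^{1,\alpha}$ into $C^{0,\alpha}$, is compact as an operator from $C^{1,\alpha}(\partial\Omega_Q)$ to itself. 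Hence $\mathcal{T}+\frac12\mathbb{I}$ is compact, and by the Fredholm alternative $\mathcal{T}$ is a linear homeomorphism if and only if it is injective. So the whole weight of part i) is in proving $\ker\mathcal{T}=\{0\}$.

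For injectivity, suppose $\mathcal{T}[\mu]=0$. Then the function $u:=\mathcal{D}^{k,-}_{q,\eta}[\partial\Omega_Q,\mu]+iA(k)\mathcal{S}^{k,-}_{q,\eta}[\partial\Omega_Q,\mu]$ on $\mathbb{S}^-_q[\Omega_Q]$ is $(Q,\eta)$-quasi-periodic, solves the Helmholtz equation there (here we use $k^2\notin\sigma_{q,\eta}(-\Delta)$, i.e. $Z_{q,\eta}(k)=\emptyset$, so that both potentials are genuine solutions of $\Delta+k^2=0$), and by the jump formulas \eqref{jfd} and \eqref{jfs} its boundary trace on $\partial\Omega_Q$ equals $\bigl(-\frac12\mu+\mathcal{K}^{k}_{q,\eta}[\partial\Omega_Q,\mu]\bigr)+iA(k)\mathcal{S}^{k}_{q,\eta}[\partial\Omega_Q,\mu]_{|\partial\Omega_Q}=\mathcal{T}[\mu]=0$. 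Since $k^2\notin\sigma^D_{q,\eta}(-\Delta,\mathbb{S}^-_q[\Omega_Q])$, the homogeneous quasi-periodic Dirichlet problem on the exterior periodic domain has only the trivial solution, so $u\equiv 0$ in $\mathbb{S}^-_q[\Omega_Q]$. The remaining task is to push this information back across $\partial\Omega_Q$ to conclude $\mu=0$. Using the other jump relations, from $u=0$ on $\overline{\mathbb{S}^-_q[\Omega_Q]}$ one gets $\mathcal{D}^{k,+}_{q,\eta}[\partial\Omega_Q,\mu]-iA(k)\mathcal{S}^{k,+}_{q,\eta}[\partial\Omega_Q,\mu]$ restricted to $\Omega_Q$: its Dirichlet trace on $\partial\Omega_Q$ is $\mu$ (from the $+\frac12\mu$ jump minus the cancelling terms) and its normal derivative, by the no-jump of $\nu\cdot\nabla\mathcal{D}$ and the $\mp\frac12$ jump of $\nu\cdot\nabla\mathcal{S}$, is $-iA(k)\,(-\mu)\cdot\frac12\cdot 2$; more precisely one finds that $v:=\mathcal{D}^{k,+}_{q,\eta}[\partial\Omega_Q,\mu]-iA(k)\mathcal{S}^{k,+}_{q,\eta}[\partial\Omega_Q,\mu]$ solves, on the bounded interior domain $\Omega_Q$, the Helmholtz equation with the Robin-type condition $\partial_{\nu_{\Omega_Q}}v+\text{(const)}\,iA(k)\,v=0$ on $\partial\Omega_Q$ together with $v=\mu$ there. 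The case distinction encoded in $A(k)$ is exactly designed so that this interior problem has only the trivial solution: if $k^2\notin\sigma^N(-\Delta,\Omega_Q)$ we take $A(k)=0$ and use that the interior Neumann eigenvalue problem has no nontrivial solution; if $k^2\in\sigma^N(-\Delta,\Omega_Q)$ we take $A(k)=1$ and the imaginary Robin term kills the solution via a Green's identity / integration-by-parts argument (the boundary integral of $|v|^2$ must vanish, forcing $v\equiv 0$ by unique continuation, hence $\mu=v_{|\partial\Omega_Q}=0$). This yields $\ker\mathcal{T}=\{0\}$ and completes i).

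For statement ii), existence and the representation formula follow immediately: given $g\in C^{1,\alpha}(\partial\Omega_Q)$, set $\mu:=\mathcal{T}^{(-1)}[g]$ (well-defined by i)) and let $u:=\mathcal{D}^{k,-}_{q,\eta}[\partial\Omega_Q,\mu]+iA(k)\mathcal{S}^{k,-}_{q,\eta}[\partial\Omega_Q,\mu]$. By Propositions \ref{prop:dlp} and \ref{prop:slp} this $u$ lies in $C^{1,\alpha}(\overline{\mathbb{S}^-_q[\Omega_Q]})$, is $(Q,\eta)$-quasi-periodic, and solves $\Delta u+k^2u=0$ in $\mathbb{S}^-_q[\Omega_Q]$; its trace on $\partial\Omega_Q$ is $\mathcal{T}[\mu]=g$ by the jump formulas. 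Uniqueness in $C^{1,\alpha}(\overline{\mathbb{S}^-_q[\Omega_Q]})$ is precisely the statement that the difference of two solutions lies in $\sigma^D_{q,\eta}(-\Delta,\mathbb{S}^-_q[\Omega_Q])$ unless it is zero, which is excluded by hypothesis. The main obstacle throughout is the injectivity of $\mathcal{T}$, and within that the delicate point is the interior argument that extracts $\mu=0$ from the vanishing of $u$ in the exterior domain; the auxiliary factor $A(k)$ and the factor $i$ in front of the single layer potential are introduced exactly to make the relevant interior boundary value problem uniquely solvable, and I would be careful to track the precise constants in the jump relations \eqref{jfd} and \eqref{jfs} so that the interior Robin condition comes out with the correct (purely imaginary, sign-definite) coefficient.
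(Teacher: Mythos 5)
Your overall strategy is exactly the paper's: establish that $\mathcal{T}$ is a Fredholm operator of index zero via compactness, and then reduce invertibility to injectivity, which you prove by first showing the exterior potential $u$ vanishes (using $k^2\notin\sigma^D_{q,\eta}(-\Delta,\mathbb{S}^-_q[\Omega_Q])$) and then transferring to the interior of $\Omega_Q$. However, there is a concrete sign error in the interior step that wrecks the argument in the case $A(k)=1$. Having set $u=\mathcal{D}^{k,-}_{q,\eta}[\partial\Omega_Q,\mu]+iA(k)\mathcal{S}^{k,-}_{q,\eta}[\partial\Omega_Q,\mu]$, the interior function must be taken with the \emph{same} sign, namely $v:=\mathcal{D}^{k,+}_{q,\eta}[\partial\Omega_Q,\mu]+iA(k)\mathcal{S}^{k,+}_{q,\eta}[\partial\Omega_Q,\mu]$. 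Only then does the jump relation \eqref{jfd} for the double layer together with the continuity of the single layer give $v|_{\partial\Omega_Q}=u|_{\partial\Omega_Q}+\mu=\mu$. The function $\mathcal{D}^{k,+}-iA(k)\mathcal{S}^{k,+}$ you wrote has boundary trace $\mu-2iA(k)\mathcal{S}^{k}_{q,\eta}[\partial\Omega_Q,\mu]|_{\partial\Omega_Q}$, which is not $\mu$ in general, so the subsequent reasoning does not go through. With the correct sign, the jump relations \eqref{jfd} and \eqref{jfs} yield $v|_{\partial\Omega_Q}=\mu$ and $\partial_{\nu_{\Omega_Q}}v=-iA(k)\mu$.

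With these identities the conclusion is immediate and you do not need unique continuation: when $A(k)=0$ the function $v=\mathcal{D}^{k,+}_{q,\eta}[\partial\Omega_Q,\mu]$ solves the homogeneous interior Neumann problem, so $v\equiv 0$ by $k^2\notin\sigma^N(-\Delta,\Omega_Q)$, hence $\mu=v|_{\partial\Omega_Q}=0$; when $A(k)=1$, so that $k^2\in\sigma^N(-\Delta,\Omega_Q)$ is in particular \emph{real}, Green's first identity on $\Omega_Q$ gives
\[
-i\int_{\partial\Omega_Q}|\mu|^2\,d\sigma=\int_{\partial\Omega_Q}\overline{v}\,\partial_{\nu_{\Omega_Q}}v\,d\sigma=\int_{\Omega_Q}|\nabla v|^2-k^2|v|^2\,dx,
\]
and taking imaginary parts (here is precisely where the reality of $k^2$, guaranteed by $k^2\in\sigma^N(-\Delta,\Omega_Q)$, is used — this is the real reason for the case split, not merely the form of the interior boundary condition) forces $\int_{\partial\Omega_Q}|\mu|^2\,d\sigma=0$, i.e.\ $\mu=0$ directly. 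Your statement that ``the boundary integral of $|v|^2$ must vanish, forcing $v\equiv 0$ by unique continuation'' is a detour: since $\mu=v|_{\partial\Omega_Q}$, the vanishing of that boundary integral \emph{is} the statement $\mu=0$. Part ii) as you wrote it is fine.
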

\begin{proof}
We first consider statement i). By Proposition \ref{prop:dlp} v), Proposition \ref{prop:slp} iv), by the compactness of the embedding 
of $C^{1,\alpha}(\partial \Omega_Q)$ in $C^{0,\alpha}(\partial \Omega_Q)$, and by the continuity of the restriction operator from  $ C^{1,\alpha}(\overline{\mathbb{S}^-_q[\Omega_Q]})$ to $C^{1,\alpha}(\partial \Omega_Q)$, the operator 
\[
\mu \mapsto \mathcal{K}^{k}_{q,\eta}[\partial\Omega_Q,\mu] +i A(k)\mathcal{S}^{k}_{q,\eta}[\partial\Omega_Q,\mu]_{|\partial\Omega_Q}
\]
is compact in $C^{1,\alpha}(\partial \Omega_Q)$. Therefore $\mathcal{T}$ is a Fredholm operator of index $0$ and, accordingly, to show that $\mathcal{T}$ is invertible it suffices to show that  it is injective. Let $\mu \in C^{1,\alpha}(\partial \Omega_Q)$ be such that 
\[
\mathcal{T}[\mu] = -\frac{1}{2}\mu + \mathcal{K}^{k}_{q,\eta}[\partial\Omega_Q,\mu] -i  A(k)\mathcal{S}^{k}_{q,\eta}[\partial\Omega_Q,\mu]_{|\partial\Omega_Q}=0.
\]
We now consider separately two cases. We first suppose that $ k^2  \notin \sigma^N(-\Delta,\Omega_Q)$, a case in which $A(k)=0$.   By the jump formula  \eqref{jfd} for the double layer potential,  the function defined by 
\[
u := \mathcal{D}^{k,-}_{q,\eta}[\partial\Omega_Q,\mu]-i  A(k)\mathcal{S}^{k,-}_{q,\eta}[\partial\Omega_Q,\mu]  = \mathcal{D}^{k,-}_{q,\eta}[\partial\Omega_Q,\mu] \qquad \mbox{ in } \overline{\mathbb{S}^-_q[\Omega_Q]}
\]
solves 
\begin{equation*}
\begin{cases}
\Delta u + k^2 u =0 \qquad &\mbox{ in } \mathbb{S}^-_q[\Omega_Q],\\
u \, \mbox{is $(Q,\eta)$-quasi-periodic},\\
u=0 \qquad &\mbox{ on } \partial\Omega_Q.
\end{cases}
\end{equation*}
Note that $u$ satisfies the Helmholtz equation since by assumption $k^2 \notin \sigma_{q,\eta}(-\Delta)$, which is equivalent to $Z_{q,\eta}(k) = \emptyset$, and accordingly the  double layer potential satisfies the Helmholtz equation (see Proposition \ref{prop:dlp} i)). 
Since by assumption $k^2 \notin \sigma^D_{q,\eta}(-\Delta, \mathbb{S}^-_q[\Omega_Q]) $, then 
\[
u=0\qquad \mbox{ in } \overline{\mathbb{S}^-_q[\Omega_Q]}.
\]
Now we set 
\[
v = \mathcal{D}^{k,+}_{q,\eta}[\partial\Omega_Q,\mu]\qquad \mbox{ in }  \overline{\Omega_Q}.
\]
By the continuity of the normal derivative of the interior and exterior double layer potential in Proposition \ref{prop:dlp} iii), the function 
$v$ solves the Neumann problem
\begin{equation*}
\begin{cases}
\Delta v + k^2 v =0 \qquad &\mbox{ in } \Omega_Q,\\
\partial_{\nu_{\Omega_Q}} v=0 \qquad &\mbox{ on } \partial\Omega_Q.
\end{cases}
\end{equation*}
Since $k^2$ is not an eigenvalue of the Neumann Laplacian  in $\Omega_Q$, then 
\[
v=0\qquad \mbox{ in } \overline{\Omega_Q}.
\]
Thus, by the jump formula for the double layer potential in Proposition \ref{prop:dlp} iii):
\[
\mu =  \mathcal{D}^{k,+}_{q,\eta}[\partial\Omega_Q,\mu]_{|\partial\Omega_Q}  -  \mathcal{D}^{k,-}_{q,\eta}[\partial\Omega_Q,\mu]_{|\partial\Omega_Q} = v-u =0 \quad \mbox{ on } \partial\Omega_Q.
\]
 
Now we consider the case in which $ k^2  \in \sigma^N(-\Delta,\Omega_Q)$.  Again, by the jump formula for the double layer potential and by the continuity of the single layer potential, the function defined by 
\[
u := \mathcal{D}^{k,-}_{q,\eta}[\partial\Omega_Q,\mu] -i  \mathcal{S}^{k,-}_{q,\eta}[\partial\Omega_Q,\mu]  \qquad \mbox{ in } \overline{\mathbb{S}^-_q[\Omega_Q]}
\]
solves 
\begin{equation*}
\begin{cases}
\Delta u + k^2 u =0 \qquad &\mbox{ in } \mathbb{S}^-_q[\Omega_Q],\\
u \, \mbox{is $(Q,\eta)$-quasi-periodic},\\
u=0 \qquad &\mbox{ on } \partial\Omega_Q.
\end{cases}
\end{equation*}
Since, by assumption, $k^2 \notin \sigma^D_{q,\eta}(-\Delta, \mathbb{S}^-_q[\Omega_Q]) $ then 
\[
u=0\qquad \mbox{ in } \overline{\mathbb{S}^-_q[\Omega_Q]}.
\]
Now we set
\[
v = \mathcal{D}^{k,+}_{q,\eta}[\partial\Omega_Q,\mu] -i \mathcal{S}^{k,+}_{q,\eta}[\partial\Omega_Q,\mu]  \qquad \mbox{ in }  \overline{\Omega_Q}.
\]
 By the jumping properties of the double layer potential (equality \eqref{jfd}), by the continuity of the single layer potential (Proposition \ref{prop:slp} iii)), and by equality $u=0$, we see that
\begin{equation}\label{dirpb1}
v_{|\partial\Omega_Q} = \mu +  \mathcal{D}^{k,-}_{q,\eta}[\partial\Omega_Q,\mu]_{|\partial\Omega_Q}  -i  \mathcal{S}^{k,-}_{q,\eta}[\partial\Omega_Q,\mu]_{|\partial\Omega_Q} = \mu+u= \mu.
\end{equation}
Moreover, by the continuity of the normal derivative of the double layer potential \eqref{jfd}, by the jump formula for the normal derivative of the single layer potential \eqref{jfs}, and by equality $u=0$, we have
\begin{equation}\label{dirpb2}
\frac{\partial}{\partial\nu_{\Omega_Q}}v = \frac{\partial}{\partial\nu_{\Omega_Q}} \mathcal{D}^{k,-}_{q,\eta}[\partial\Omega_Q,\mu]-i\left(\frac{\partial}{\partial\nu_{\Omega_Q}} \mathcal{S}^{k,-}_{q,\eta}[\partial\Omega_Q,\mu]-\mu\right)= \frac{\partial u}{\partial\nu_{\Omega_Q}}+i\mu=i\mu
\end{equation}
By the $(Q,\eta)$-quasi-periodicity  of $v$, and since $\nu_Q$ has opposite sign on opposite faces of $\partial Q$, we can verify that
\begin{equation}\label{dirpb3}
\int_{\partial Q}\frac{\partial v}{\partial\nu_{Q}} \overline{v}\,d\sigma = \int_{\partial Q}\left(\frac{\partial (v(x) e^{-i\eta\cdot x})}{\partial\nu_{Q}}+i\eta \cdot\nu_Q(x)v(x)e^{-i\eta\cdot x }\right) \overline{v(x)e^{-i\eta\cdot x }}\,d\sigma_x={0}
\end{equation}
(see also Ammari, Kang and Lee \cite[p. 125]{AmKaLe09}).
Then, the first Green identity (cf., e.g., Colton and Kress \cite[(3.4), p. 68]{CoKr13}) and equalities 
\eqref{dirpb1}, \eqref{dirpb2}, and \eqref{dirpb3}, 
imply that
\begin{align}\label{eq:varexist:1}
i\int_{\partial\Omega_Q}|\mu|^2\,d\sigma = \int_{\partial\Omega_Q}\frac{\partial v}{\partial\nu_{\Omega_Q}} \overline{v}\,d\sigma &=- \int_{\partial Q}\frac{\partial v}{\partial\nu_Q} \overline{v}\,d\sigma+\int_{\partial\Omega_Q}\frac{\partial v}{\partial\nu_{\Omega_Q}} \overline{v}\,d\sigma \\ \nonumber
&={-} \int_{Q\setminus \overline\Omega_Q}|\nabla v|^2-k^2|v|^2\,dx.
\end{align}
Taking the imaginary part in \eqref{eq:varexist:1} and recalling that $k^2$ is an eigenvalue of the Neumann Laplacian and thus a real number, we get
\begin{equation*}
 \int_{\partial \Omega_Q}|\mu|^2d\sigma=0
\end{equation*}
 and statement i) follows. 

The validity of statement ii) follows from statement i) and from the properties of layer potentials (see Propositions \ref{prop:dlp} and \ref{prop:slp}).
\end{proof}

\begin{rem}
To prove Theorem \ref{thm:dirpb} we have adjusted an argument that was used by Colton and Kress  \cite[Thm.~3.33, p.~91]{CoKr13} to obtain a similar result for the classical Helmholtz equation. Provided that  $k^2 \notin \sigma_{q,\eta}(-\Delta)$ and
$k^2 \notin \sigma^D_{q,\eta}(-\Delta, \mathbb{S}^-_q[\Omega_Q])$, Theorem \ref{thm:dirpb} shows that the solution $u$ of problem \eqref{pb:dir} can be written as a sum of a double layer potential and a single layer potential. If we further assume that $k^2 \notin \sigma^N(-\Delta,\Omega_Q)$, then  $A(k) =0$ and a double layer potential is sufficient. Indeed, in that  case  we have
\[
 u = \mathcal{D}^{k,-}_{q,\eta}[\partial\Omega_Q,\mu]
\]
where $\mu$ is the unique solution of the integral equation
\[
 -\frac{1}{2}\mu+ \mathcal{K}^{k}_{q,\eta}[\partial\Omega_Q,\mu] = g.
\]
\end{rem}

As an immediate consequence of Theorem \ref{thm:dirpb} we obtain the following representation result for a $(Q,\eta)$-quasi-periodic function satisfying the  Helmholtz equation.
\begin{cor}\label{cor:repdir}
Let $\alpha \in \mathopen]0,1[$, $q \in \mathbb{D}^+_n(\mathbb{R})$, and $\eta \in \mathbb{R}^n$.
Let $\Omega_Q$ be a bounded open subset of $\mathbb{R}^n$ of class $C^{1,\alpha}$  such that $\overline{\Omega_Q} \subseteq Q$.  
Let {$k \in \mathbb{C}$ be such} that $k^2 \notin\sigma_{q,\eta}(-\Delta)$, 
$k^2 \notin \sigma^D_{q,\eta}(-\Delta, \mathbb{S}^-_q[\Omega_Q])$. Let $u \in C^{1,\alpha}(\overline{\mathbb{S}^-_q[\Omega_Q]})$ be such that
\[
\begin{cases}
\Delta u + k^2 u =0 \qquad &\mbox{ in } \mathbb{S}^-_q[\Omega_Q],\\
u \, \mbox{is $(Q,\eta)$-quasi-periodic}.
\end{cases}
\]
Then there exists a unique $\mu \in C^{1,\alpha}(\partial\Omega_Q)$ such that
\[
u =  \mathcal{D}^{k,-}_{q,\eta}[\partial\Omega_Q,\mu] +i A(k)\mathcal{S}^{k,-}_{q,\eta}[\partial\Omega_Q,\mu],
\]
where $\mu$ is the unique solution in $C^{1,\alpha}(\partial\Omega_Q)$ of 
\[
\mathcal{T}[\mu]= u_{|\partial\Omega_Q}
\]
\end{cor}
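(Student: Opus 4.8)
The plan is to deduce Corollary \ref{cor:repdir} directly from Theorem \ref{thm:dirpb}, treating it as a pure representation statement. Given $u \in C^{1,\alpha}(\overline{\mathbb{S}^-_q[\Omega_Q]})$ solving the Helmholtz equation in $\mathbb{S}^-_q[\Omega_Q]$ with the $(Q,\eta)$-quasi-periodicity condition, I would first restrict $u$ to the boundary and set $g := u_{|\partial\Omega_Q}$. Since $u \in C^{1,\alpha}(\overline{\mathbb{S}^-_q[\Omega_Q]})$ and the restriction operator to $\partial\Omega_Q$ is continuous, we have $g \in C^{1,\alpha}(\partial\Omega_Q)$, so $g$ is a legitimate Dirichlet datum for problem \eqref{pb:dir}.

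Next I would observe that $u$ itself is a solution of problem \eqref{pb:dir} with this datum $g$: the first two lines of \eqref{pb:dir} are exactly the hypotheses assumed on $u$, and the third line $u = g$ on $\partial\Omega_Q$ holds by the very definition of $g$. By the uniqueness part of Theorem \ref{thm:dirpb} ii) — which applies precisely because we are assuming $k^2 \notin \sigma_{q,\eta}(-\Delta)$ and $k^2 \notin \sigma^D_{q,\eta}(-\Delta, \mathbb{S}^-_q[\Omega_Q])$ — the function $u$ must coincide with the solution produced by that theorem, namely
\[
u = \mathcal{D}^{k,-}_{q,\eta}[\partial\Omega_Q,\mu] + i A(k)\mathcal{S}^{k,-}_{q,\eta}[\partial\Omega_Q,\mu],
\qquad \mu = \mathcal{T}^{(-1)}[g] = \mathcal{T}^{(-1)}[u_{|\partial\Omega_Q}].
\]
This is exactly the asserted representation, and it identifies $\mu$ as the unique solution in $C^{1,\alpha}(\partial\Omega_Q)$ of the equation $\mathcal{T}[\mu] = u_{|\partial\Omega_Q}$.

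For uniqueness of $\mu$, I would argue as follows. If $\mu' \in C^{1,\alpha}(\partial\Omega_Q)$ also gives $u = \mathcal{D}^{k,-}_{q,\eta}[\partial\Omega_Q,\mu'] + i A(k)\mathcal{S}^{k,-}_{q,\eta}[\partial\Omega_Q,\mu']$, then taking the trace on $\partial\Omega_Q$ and using the jump formulas \eqref{jfd} for the double layer potential and the continuity of the single layer potential from Proposition \ref{prop:slp} iii), one gets $\mathcal{T}[\mu'] = u_{|\partial\Omega_Q}$. Since Theorem \ref{thm:dirpb} i) guarantees $\mathcal{T}$ is a linear homeomorphism of $C^{1,\alpha}(\partial\Omega_Q)$ onto itself, it is in particular injective, hence $\mu' = \mathcal{T}^{(-1)}[u_{|\partial\Omega_Q}] = \mu$.

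There is no serious obstacle here: the corollary is essentially a repackaging of Theorem \ref{thm:dirpb} in which the Dirichlet datum $g$ is taken to be the trace of the given solution $u$. The only point requiring a moment of care is checking that $u$ genuinely qualifies as a solution of \eqref{pb:dir} — i.e., that its boundary trace lies in $C^{1,\alpha}(\partial\Omega_Q)$ so that the regularity framework of Theorem \ref{thm:dirpb} ii) applies — but this is immediate from $u \in C^{1,\alpha}(\overline{\mathbb{S}^-_q[\Omega_Q]})$ and the continuity of the trace operator. The hardest conceptual work has already been absorbed into the proof of Theorem \ref{thm:dirpb}.
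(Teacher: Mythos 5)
Your proposal is correct and matches the paper's intent exactly: the paper presents Corollary \ref{cor:repdir} as ``an immediate consequence of Theorem \ref{thm:dirpb}'' with no further argument, and your reasoning (taking $g=u_{|\partial\Omega_Q}$, invoking uniqueness in Theorem \ref{thm:dirpb} ii), and deducing uniqueness of $\mu$ from the bijectivity of $\mathcal{T}$) is precisely the unstated argument.
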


\subsection{Neumann problem}
Let $h \in C^{0,\alpha}(\partial \Omega_Q)$ and $k \in \mathbb{C}$. Here we consider the Neumann problem
\begin{equation}\label{pb:neu}
\begin{cases}
\Delta u + k^2 u =0 \qquad &\mbox{ in } \mathbb{S}^-_q[\Omega_Q],\\
u \, \mbox{is $(Q,\eta)$-quasi-periodic},\\
\partial_{\nu_{\Omega_Q}} u=h \qquad &\mbox{ on } \partial\Omega_Q.
\end{cases}
\end{equation}
In the next theorem we show how to solve problem \eqref{pb:neu} and how the solution can be represented 
by means of layer potentials.  For the sake of simplicity we require the additional assumption that $k^2$ is not an eigenvalue of the Dirichlet Laplacian in $\Omega_Q$. In this case it is possible to represent the solution by means of only a single layer potential. Note that to our scope, this requirement is sufficient  (see Section \ref{s:singbvp} and in particular \eqref{assepsbis})
\begin{thm}\label{thm:neupb}
Let $\alpha \in \mathopen]0,1[$, $q \in \mathbb{D}^+_n(\mathbb{R})$, and $\eta \in \mathbb{R}^n$.
Let $\Omega_Q$ be a bounded open subset of $\mathbb{R}^n$ of class $C^{1,\alpha}$  such that $\overline{\Omega_Q} \subseteq Q$.  
Let {$k \in \mathbb{C}$ be such} that $k^2 \notin \sigma_{q,\eta}(-\Delta)$, 
$k^2 \notin  \sigma^N_{q,\eta}(-\Delta, \mathbb{S}^-_q[\Omega_Q])$, $k^2 \notin \sigma^{D}(-\Delta,\Omega_Q)$. Then the following statements hold.
\begin{itemize}
\item[i)] The integral operator $\mathcal{M}$ from $C^{0,\alpha}(\partial\Omega_Q)$ to itself defined by
\[
\mathcal{M} := \frac{1}{2}\mathbb{I} + \left(\mathcal{K}^{k}_{q,\eta}\right)^*[\partial\Omega_Q,\cdot]  ,
\]
where $\mathbb{I}$ is the identity operator on  $C^{0,\alpha}(\partial\Omega_Q)$,  is a linear homeomorphism.
\item[ii)] Let $h \in C^{0,\alpha}(\partial \Omega_Q)$. Then problem \eqref{pb:neu} admits a unique solution
 $u \in C^{1,\alpha}(\overline{\mathbb{S}^-_q[\Omega]})$. Moreover
 \[
 u = \mathcal{S}^{k,-}_{q,\eta}[\partial\Omega_Q,\mu]
 \]
 where
 \[
 \mu = \mathcal{M}^{(-1)}[g].
 \]
\end{itemize}
\end{thm}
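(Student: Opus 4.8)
The plan is to follow the scheme of the proof of Theorem~\ref{thm:dirpb}; since here the extra hypothesis $k^2\notin\sigma^D(-\Delta,\Omega_Q)$ is available, no variational (Green identity) argument is needed, so the proof is actually shorter. For statement i), I would first note that by Proposition~\ref{prop:slp}~v) the operator $\left(\mathcal{K}^{k}_{q,\eta}\right)^*[\partial\Omega_Q,\cdot]$ is compact on $C^{0,\alpha}(\partial\Omega_Q)$, hence $\mathcal{M}=\frac{1}{2}\mathbb{I}+\left(\mathcal{K}^{k}_{q,\eta}\right)^*[\partial\Omega_Q,\cdot]$ is a Fredholm operator of index $0$ and it suffices to prove that it is injective. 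So let $\mu\in C^{0,\alpha}(\partial\Omega_Q)$ satisfy $\mathcal{M}[\mu]=0$. Since $k^2\notin\sigma_{q,\eta}(-\Delta)$, that is $Z_{q,\eta}(k)=\emptyset$, Proposition~\ref{prop:slp}~i), ii) gives that $u:=\mathcal{S}^{k,-}_{q,\eta}[\partial\Omega_Q,\mu]$ solves the Helmholtz equation in $\mathbb{S}^-_q[\Omega_Q]$ and is $(Q,\eta)$-quasi-periodic, while the jump relation~\eqref{jfs} for the exterior trace yields $\partial_{\nu_{\Omega_Q}}u=\frac{1}{2}\mu+\left(\mathcal{K}^{k}_{q,\eta}\right)^*[\partial\Omega_Q,\mu]=\mathcal{M}[\mu]=0$ on $\partial\Omega_Q$. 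As $k^2\notin\sigma^N_{q,\eta}(-\Delta,\mathbb{S}^-_q[\Omega_Q])$, this forces $u=0$ in $\overline{\mathbb{S}^-_q[\Omega_Q]}$.

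Next I would exploit the continuity of the single layer potential across $\partial\Omega_Q$ (Proposition~\ref{prop:slp}~iii)): since $\mathcal{S}^{k,-}_{q,\eta}[\partial\Omega_Q,\mu]=u=0$ on $\partial\Omega_Q$, also $v:=\mathcal{S}^{k,+}_{q,\eta}[\partial\Omega_Q,\mu]$ vanishes on $\partial\Omega_Q$, so $v$ solves the interior Dirichlet problem $\Delta v+k^2v=0$ in $\Omega_Q$, $v=0$ on $\partial\Omega_Q$ (once again $Z_{q,\eta}(k)=\emptyset$ guarantees that $v$ satisfies the Helmholtz equation). The hypothesis $k^2\notin\sigma^D(-\Delta,\Omega_Q)$ then gives $v=0$ in $\overline{\Omega_Q}$, and subtracting the two instances of~\eqref{jfs},
\[
\mu=\nu_{\Omega_Q}\cdot\nabla\mathcal{S}^{k,-}_{q,\eta}[\partial\Omega_Q,\mu]-\nu_{\Omega_Q}\cdot\nabla\mathcal{S}^{k,+}_{q,\eta}[\partial\Omega_Q,\mu]=\partial_{\nu_{\Omega_Q}}u-\partial_{\nu_{\Omega_Q}}v=0\qquad\text{on }\partial\Omega_Q,
\]
so $\mathcal{M}$ is injective and hence a linear homeomorphism.

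For statement ii), existence is immediate: given $h\in C^{0,\alpha}(\partial\Omega_Q)$, I would set $\mu:=\mathcal{M}^{(-1)}[h]$ and $u:=\mathcal{S}^{k,-}_{q,\eta}[\partial\Omega_Q,\mu]$; by Proposition~\ref{prop:slp} this $u$ belongs to $C^{1,\alpha}(\overline{\mathbb{S}^-_q[\Omega_Q]})$, is $(Q,\eta)$-quasi-periodic, satisfies the Helmholtz equation, and has $\partial_{\nu_{\Omega_Q}}u=\mathcal{M}[\mu]=h$ by~\eqref{jfs}. Uniqueness follows by subtracting two solutions: the difference solves the homogeneous Neumann problem, and $k^2\notin\sigma^N_{q,\eta}(-\Delta,\mathbb{S}^-_q[\Omega_Q])$ forces it to vanish. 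There is no serious obstacle in any of this; the only points requiring care are the bookkeeping of the sign conventions in~\eqref{jfs} and checking that each of the three spectral exclusions is used exactly once: the exterior Neumann exclusion for the injectivity of $\mathcal{M}$ and for uniqueness, the interior Dirichlet exclusion to kill $v$, and $k^2\notin\sigma_{q,\eta}(-\Delta)$ to ensure that the layer potentials are genuine solutions of the Helmholtz equation rather than of the inhomogeneous equations appearing in Propositions~\ref{prop:dlp}~i) and~\ref{prop:slp}~i).
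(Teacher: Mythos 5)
Your proof is correct and follows essentially the same approach as the paper: compactness of $\left(\mathcal{K}^{k}_{q,\eta}\right)^*$ gives Fredholmness of index $0$, injectivity is obtained by first killing the exterior single layer potential via the Neumann spectral exclusion, then the interior one via the Dirichlet spectral exclusion, and finally invoking the jump of the normal derivative to conclude $\mu=0$. (As a small side note, your final subtraction $\mu=\partial_{\nu}\mathcal{S}^{k,-}-\partial_{\nu}\mathcal{S}^{k,+}$ has the sign that is actually consistent with \eqref{jfs}, whereas the paper writes the opposite order; the discrepancy is immaterial since both terms vanish.)
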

\begin{proof}
We consider statement i). By Proposition   \ref{prop:slp} v), the map $\left(\mathcal{K}^{k}_{q,\eta}\right)^*[\partial\Omega_Q,\cdot]$ is compact in $C^{0,\alpha}(\partial \Omega_Q)$. Therefore $\mathcal{M}$ is a Fredholm operator of index $0$ and, accordingly, to show that $\mathcal{M}$ is invertible it suffices to show that  it is injective. Let $\mu \in C^{0,\alpha}(\partial \Omega_Q)$ be such that 
\[
\mathcal{M}[\mu] = \frac{1}{2}\mu + \left(\mathcal{K}^{k}_{q,\eta}\right)^*[\partial\Omega_Q,\mu] =0.
\]
 By the jump formula \eqref{jfs} for the normal derivative of the layer potential,  the function defined by 
\[
u :=  \mathcal{S}^{k,-}_{q,\eta}[\partial\Omega_Q,\mu]   \qquad \mbox{ in } \overline{\mathbb{S}^-_q[\Omega_Q]}
\]
solves 
\begin{equation*}
\begin{cases}
\Delta u + k^2 u =0 \qquad &\mbox{ in } \mathbb{S}^-_q[\Omega_Q],\\
u \, \mbox{is $(Q,\eta)$-quasi-periodic},\\
\partial_{\nu_{\Omega_Q}} u=0 \qquad &\mbox{ on } \partial\Omega_Q.
\end{cases}
\end{equation*}
The first equation of the system follows by the assumption that $k^2 \notin \sigma_{q,\eta}(-\Delta)$, which is equivalent to $Z_{q,\eta}(k) = \emptyset$ and, by Proposition \ref{prop:slp} i),  implies that  the  single layer potential satisfies the Helmholtz equation. 
Since by assumption $k^2 \notin \sigma^N_{q,\eta}(-\Delta, \mathbb{S}^-_q[\Omega_Q])$, we have
\[
u=0\qquad \mbox{ in } \overline{\mathbb{S}^-_q[\Omega_Q]}.
\]
Now we set 
\[
v = \mathcal{S}^{k,+}_{q,\eta}[\partial\Omega_Q,\mu]\qquad \mbox{ in }  \overline{\Omega_Q}.
\]
By the continuity of  the single layer potential through the boundary (see Proposition \ref{prop:slp} iii)), the function 
$v$ solves the Dirichlet problem
\begin{equation*}
\begin{cases}
\Delta {v} + k^2 {v} =0 \qquad &\mbox{ in } \Omega_Q,\\
 v=0 \qquad &\mbox{ on } \partial\Omega_Q.
\end{cases}
\end{equation*}
Since $k^2$ is not an eigenvalue of the Dirichlet Laplacian  in $\Omega_Q$, then 
\[
v=0\qquad \mbox{ in } \overline{\Omega_Q}.
\]
Then, by the jump formula for the normal derivative of the single layer potential in Proposition \ref{prop:slp} iii):
\begin{align*}
\mu &=  \partial_{\nu_{\Omega_Q}}\mathcal{S}^{k,+}_{q,\eta}[\partial\Omega_Q,\mu]  - \partial_{\nu_{\Omega_Q}}\mathcal{S}^{k,-}_{q,\eta}[\partial\Omega_Q,\mu] \\
&=  \partial_{\nu_{\Omega_Q}} v- \partial_{\nu_{\Omega_Q}} u =0,
\end{align*}
and accordingly the statement follows.
\end{proof}
Similarly to the case of the Dirichlet problem, the previous Theorem \ref{thm:neupb} implies the following representation result.
\begin{cor}\label{cor:repneu}
Let $\alpha \in \mathopen]0,1[$, $q \in \mathbb{D}^+_n(\mathbb{R})$, and $\eta \in \mathbb{R}^n$.
Let $\Omega_Q$ be a bounded open subset of $\mathbb{R}^n$ of class $C^{1,\alpha}$  such that $\overline{\Omega_Q} \subseteq Q$.  
Let {$k \in \mathbb{C}$ be such} that $k^2 \notin \sigma_{q,\eta}(-\Delta)$, 
$k^2 \notin  \sigma^N_{q,\eta}(-\Delta, \mathbb{S}^-_q[\Omega_Q])$, $k^2 \notin \sigma^{D}(-\Delta,\Omega_Q)$. Let $u \in C^{1,\alpha}(\overline{\mathbb{S}^-_q[\Omega_Q]})$ be such that
\[
\begin{cases}
\Delta u + k^2 u =0 \qquad &\mbox{ in } \mathbb{S}^-_q[\Omega_Q],\\
u \, \mbox{is $(Q,\eta)$-quasi-periodic}.
\end{cases}
\]
Then there exists a unique $\mu \in C^{0,\alpha}(\partial\Omega_Q)$ such that
\[
u = \mathcal{S}^{k,-}_{q,\eta}[\partial\Omega_Q,\mu],
\]
where $\mu$ is the unique solution in $C^{0,\alpha}(\partial\Omega_Q)$ of 
\[
\mathcal{M}[\mu]= \frac{\partial}{\partial \nu_{\Omega_Q}}u{\, .}
\]
\end{cor}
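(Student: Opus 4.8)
The plan is to deduce the statement directly from Theorem \ref{thm:neupb}, of which it is essentially a reformulation. First I would observe that, since $u \in C^{1,\alpha}(\overline{\mathbb{S}^-_q[\Omega_Q]})$, its normal derivative $h := \partial_{\nu_{\Omega_Q}} u$ is a well-defined element of $C^{0,\alpha}(\partial\Omega_Q)$, and $u$ solves the Neumann problem \eqref{pb:neu} with this datum $h$. Under the standing assumptions on $k$, Theorem \ref{thm:neupb} ii) guarantees that \eqref{pb:neu} has a unique solution in $C^{1,\alpha}(\overline{\mathbb{S}^-_q[\Omega_Q]})$, and that this solution equals $\mathcal{S}^{k,-}_{q,\eta}[\partial\Omega_Q,\tilde\mu]$ with $\tilde\mu = \mathcal{M}^{(-1)}[h]$. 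By uniqueness, the given $u$ must coincide with $\mathcal{S}^{k,-}_{q,\eta}[\partial\Omega_Q,\tilde\mu]$, which yields the existence of an admissible density.

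Next I would check that \emph{every} admissible density satisfies the claimed integral equation. If $\mu \in C^{0,\alpha}(\partial\Omega_Q)$ is such that $u = \mathcal{S}^{k,-}_{q,\eta}[\partial\Omega_Q,\mu]$, then taking the exterior normal derivative on $\partial\Omega_Q$ and invoking the jump formula \eqref{jfs} with the ``$-$'' sign gives
\[
\frac{\partial}{\partial\nu_{\Omega_Q}} u = \frac{1}{2}\mu + \bigl(\mathcal{K}^{k}_{q,\eta}\bigr)^*[\partial\Omega_Q,\mu] = \mathcal{M}[\mu] \qquad \text{on } \partial\Omega_Q,
\]
which is precisely the equation appearing in the statement; in particular the density $\tilde\mu$ produced above solves it.

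Finally, for uniqueness of $\mu$ I would argue by injectivity of $\mathcal{M}$. If $\mu_1,\mu_2 \in C^{0,\alpha}(\partial\Omega_Q)$ both represent $u$ as their exterior single layer potential, then the computation just performed shows $\mathcal{M}[\mu_1] = \partial_{\nu_{\Omega_Q}} u = \mathcal{M}[\mu_2]$; since $\mathcal{M}$ is a linear homeomorphism of $C^{0,\alpha}(\partial\Omega_Q)$ onto itself by Theorem \ref{thm:neupb} i), it is in particular injective, hence $\mu_1 = \mu_2$. Equivalently, $\mu = \mathcal{M}^{(-1)}\bigl[\partial_{\nu_{\Omega_Q}} u\bigr]$ is the unique solution of $\mathcal{M}[\mu] = \partial_{\nu_{\Omega_Q}} u$.

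The argument is essentially bookkeeping: all the substantive content—the solvability of the Neumann problem, the invertibility of $\mathcal{M}$, and the jump relations for the single layer potential—has already been established. There is therefore no genuine obstacle; the only mild point of care is to confirm that the spectral hypotheses on $k^2$ required by Theorem \ref{thm:neupb}, namely $k^2 \notin \sigma_{q,\eta}(-\Delta)$, $k^2 \notin \sigma^N_{q,\eta}(-\Delta,\mathbb{S}^-_q[\Omega_Q])$, and $k^2 \notin \sigma^D(-\Delta,\Omega_Q)$, coincide exactly with those assumed in the corollary, which they do.
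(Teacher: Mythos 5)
Your argument is correct and matches the route the paper intends: the paper presents Corollary \ref{cor:repneu} as an immediate consequence of Theorem \ref{thm:neupb}, and you derive it exactly that way, by solving the associated Neumann problem, invoking the invertibility of $\mathcal{M}$, and recovering the integral equation for $\mu$ from the exterior jump relation \eqref{jfs} (with the correct sign). The uniqueness argument via injectivity of $\mathcal{M}$ is sound and fills in the bookkeeping the paper leaves implicit.
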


\section{Singular perturbations for quasi-periodic layer potentials for the Helmholtz equation}\label{s:singpot}

In this section, we study the behavior of quasi-periodic layer potentials upon singular domain perturbations. More precisely, we consider quasi-periodic layer potentials supported on the boundary of a set of the type $\Omega_{p,\epsilon}:=p+\epsilon \Omega$ where $p$ {belongs to} $Q$, $\epsilon$ {is} a sufficiently small positive parameter, and $\Omega$ {is} a sufficiently regular bounded open set. We are interested into representation formulas for the layer potentials in terms of real analytic operators when $\epsilon$ is close to the degenerate value $0$, in correspondence of which the set collapses to {the point $p$}. These results will be fundamental to study singularly perturbed quasi-periodic boundary value problems for the Helmholtz equation in the set $\mathbb{S}_q^-[\Omega_{p,\epsilon}]$ as $\epsilon \to 0^+$ by means of quasi-periodic layer potentials. We observe that the results of the present section can be seen as the quasi-periodic analog of some of the properties studied in \cite{DaLuMu22} on (singular and regular) domain perturbations for classical layer potentials for the Laplace equation. Asymptotic formulas for layer potentials are available in specific dimensions and geometric settings in Ammari, Kang, and Lee~\cite[Lem.~3.3]{AmKaLe09},  Feppon and Ammari~\cite[Prop.~2.3]{FeAm22},\cite[Prop.~2.5]{FeAm21}.

\subsection{The geometric setting}

Let $n\in \mathbb{N}\setminus \{0,1\}$.  Let $\alpha\in\mathopen]0,1[$. We take a subset  $\Omega$  of $\mathbb{R}^n$ satisfying the following  assumption:
	\begin{equation}\label{Omega_def}
	\begin{array}{c}
	\Omega\,\,\mbox{is a bounded open connected subset of}\,\,\mathbb{R}^n\,\,\mbox{of class}\,C^{1,\alpha} \\
	\mbox{such that}\   \mathbb{R}^n \setminus{\overline{\Omega}}\,\,\mbox{is connected.}
	\end{array}
	\end{equation}
Let $p\in Q$. Then there exists $\epsilon_0\in\mathopen]0,+\infty[$ such that
	\begin{equation}\label{epsilon_0}
	  p+\epsilon\,{\overline{\Omega}}\subseteq Q \quad\forall\epsilon\in\mathopen]-\epsilon_0, \epsilon_0[\, .
	\end{equation}	
To shorten our notation, we set
	$$\Omega_{p,\epsilon}:= p+\epsilon\Omega\quad\forall\epsilon\in\mathbb{R}\, .$$

%

\subsection{Notation and preliminaries}\label{hlm:ss:notation}

In this subsection, as a first step, we deduce some rescaling formulas for the family of fundamental solutions $S_n(\cdot, k)$ of the differential operators $\Delta +k^2$ with $k \in \mathbb{C}$. We retain the notation introduced in Subsection \ref{s:holfundsol}. However, for our specific purpose, we need also to introduce some other notation.

Let $\mathcal{J}_n$ be the function from $\mathbb{C}$ to $\mathbb{C}$ introduced in Definition \ref{def:hlmups}. Then we define the function $T^k_n$ from $\mathbb{R}^n$ to $\mathbb{C}$ by setting
\[
T^k_n(x):= \mathcal{J}_n(k|x|) \qquad \forall x \in \mathbb{R}^n.
\]
Then $T^k_n$ is a real analytic function (see the proof of Lanza de Cristoforis and Rossi \cite[Prop.~3.3]{LaRo08}). Moreover, if $n$ is odd, then $T^k_n(x)=0$ for all $x \in \mathbb{R}^n$. 
Let $\Upsilon_n$ be the function defined in Definition \ref{def:hlmups}. We note that if $\epsilon >0$ then
\begin{align*}
\Upsilon_n(\epsilon r,k)&=  k^{n-2}\mathcal{J}_n(\epsilon rk)\log( \epsilon r)+\frac{\mathcal{N}_n(\epsilon rk)}{\epsilon^{n-2}r^{n-2}}\\
&=  k^{n-2}\mathcal{J}_n(\epsilon rk)\log \epsilon +\frac{1}{\epsilon^{n-2}} \bigg( (\epsilon k)^{n-2}\mathcal{J}_n( r \epsilon k) \log r+\frac{\mathcal{N}_n( r \epsilon k)}{r^{n-2}}\bigg)\, ,
\end{align*}
for all $(r,k) \in \mathopen]0,+\infty\mathclose[\times \mathbb{C}$.  Therefore, if $\epsilon >0$ and $x \in \mathbb{R}^n \setminus \{0\}$, a straightforward computation shows that
\begin{equation}\label{hlm:n:eq:Snk}
S_n(\epsilon x,k)=\frac{1}{\epsilon^{n-2}}S_n(x,\epsilon k)+ (\log \epsilon) k^{n-2} T^k_n(\epsilon x),
\end{equation} 
and
\begin{equation}\label{hlm:n:eq:DSnk}
\nabla S_n(\epsilon x,k)=\frac{1}{\epsilon^{n-1}}\nabla S_n(x,\epsilon k)+ (\log \epsilon) k^{n-2} \nabla T^k_n(\epsilon x).
\end{equation} 

\subsection{Singular perturbations for the quasi-periodic single layer potential for the Helmholtz equation} 

In this subsection, we consider the behavior of the quasi-periodic single layer potential and of an associated operator upon singular domain perturbations. We begin by studying the behavior of the quasi-periodic single layer potential restricted to the boundary of $\Omega_{p,\epsilon}=p+\epsilon \Omega$. {In order to work with functional spaces that do not depend on the perturbation parameter, we pull-back the single layer to the fixed domain $\partial \Omega$ and we push-forward a density defined on $\partial\Omega$ to $\partial\Omega_{p,\epsilon}$}.
 More precisely, we consider the map from $C^{0,\alpha}(\partial \Omega)$ to $C^{1,\alpha}(\partial \Omega)$ that takes a density $\theta$ to the function  
\[
\mathcal{S}^{k}_{q,\eta}[\partial\Omega_{p,\epsilon},\theta((\cdot-p)/\epsilon)](p+\epsilon t)\quad\forall t\in\partial\Omega\,.
\]
 
\begin{prop}\label{hlm:smp:prop:bd1}
Let $\alpha \in \mathopen]0,1[$, $q \in \mathbb{D}^+_n(\mathbb{R})$, $\eta \in \mathbb{R}^n$. Let $k \in \mathbb{C}$. Let $\Omega$ be as in assumption \eqref{Omega_def}. Let $p \in Q$. Let $\epsilon_0$ be as in assumption \eqref{epsilon_0}. Let $M_1$, $M_2$, $M_3$ be the maps from $\mathopen]-\epsilon_0,\epsilon_0[$ to $\mathcal{L}(C^{0,\alpha}(\partial \Omega),C^{1,\alpha}(\partial \Omega))$, defined by
\begin{align*}
& M_1[\epsilon](\theta)(t):= \int_{\partial \Omega}S_n(t-s,\epsilon k)\theta(s)\,d\sigma_s \quad \forall t \in \partial \Omega,\\
& M_2[\epsilon](\theta)(t):= \int_{\partial \Omega}R_{S_n(\cdot,k)}(\epsilon(t-s))\theta(s)\,d\sigma_s \quad \forall t \in \partial \Omega,\\
& M_3[\epsilon](\theta)(t):= \int_{\partial \Omega}T^k_n(\epsilon(t-s))\theta(s)\,d\sigma_s \quad \forall t \in \partial \Omega,
\end{align*}
for all $\theta \in C^{0,\alpha}(\partial \Omega)$ and $\epsilon \in \mathopen]-\epsilon_0,\epsilon_0[$, 
where $R_{S_n(\cdot,k)}$ is defined in Proposition \ref{qnSR}. Then $M_1$, $M_2$, $M_3$ are real analytic and we have 
\begin{equation}\label{hlm:smp:eq:bd1}
\begin{split}
\mathcal{S}^{k}_{q,\eta}&[\partial\Omega_{p,\epsilon},\theta((\cdot-p)/\epsilon)](p+\epsilon t)\\
&=\epsilon M_1[\epsilon](\theta)(t)+\epsilon^{n-1} M_2[\epsilon](\theta)(t)+\epsilon^{n-1}(\log \epsilon) k^{n-2} M_3[\epsilon](\theta)(t) \qquad \forall t \in \partial \Omega\, ,
\end{split}
\end{equation}
for all $\theta \in C^{0,\alpha}(\partial \Omega)$ and $\epsilon \in \mathopen]0,\epsilon_0[$.
\end{prop}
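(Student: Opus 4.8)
The plan is to reduce everything to the splitting formula \eqref{dlp1}-type decomposition for the single layer potential. First I would invoke Proposition \ref{qnSR} with the choice $\mathbf{G} = S_n(\cdot,k)$, writing $G^k_{q,\eta} = S_n(\cdot,k) + R_{S_n(\cdot,k)}$ with $R_{S_n(\cdot,k)}$ real analytic on $(\mathbb{R}^n \setminus q\mathbb{Z}^n) \cup \{0\}$; in particular $R_{S_n(\cdot,k)}$ is real analytic on a neighborhood of the origin, which is all we shall need since $\epsilon(t-s)$ stays in a small ball around $0$ for $t,s \in \partial\Omega$ and $\epsilon$ small. Plugging this into the definition of $\mathcal{S}^k_{q,\eta}$, performing the change of variables $y = p + \epsilon s$, $d\sigma_y = \epsilon^{n-1} d\sigma_s$, and using $\theta((y-p)/\epsilon) = \theta(s)$, I get
\[
\mathcal{S}^{k}_{q,\eta}[\partial\Omega_{p,\epsilon},\theta((\cdot-p)/\epsilon)](p+\epsilon t)
= \epsilon^{n-1}\int_{\partial\Omega} S_n(\epsilon(t-s),k)\,\theta(s)\,d\sigma_s
+ \epsilon^{n-1}\int_{\partial\Omega} R_{S_n(\cdot,k)}(\epsilon(t-s))\,\theta(s)\,d\sigma_s.
\]
The second integral is exactly $\epsilon^{n-1} M_2[\epsilon](\theta)(t)$. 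For the first integral I would substitute the rescaling identity \eqref{hlm:n:eq:Snk}, namely $S_n(\epsilon x,k) = \epsilon^{2-n} S_n(x,\epsilon k) + (\log\epsilon)k^{n-2} T^k_n(\epsilon x)$ with $x = t-s$, which turns it into $\epsilon^{n-1}\cdot\epsilon^{2-n} M_1[\epsilon](\theta)(t) + \epsilon^{n-1}(\log\epsilon)k^{n-2} M_3[\epsilon](\theta)(t) = \epsilon M_1[\epsilon](\theta)(t) + \epsilon^{n-1}(\log\epsilon)k^{n-2} M_3[\epsilon](\theta)(t)$. Collecting the three terms gives \eqref{hlm:smp:eq:bd1}.

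The remaining — and in fact the only substantive — point is the real analyticity of $M_1$, $M_2$, $M_3$ as maps from $\mathopen]-\epsilon_0,\epsilon_0[$ into $\mathcal{L}(C^{0,\alpha}(\partial\Omega),C^{1,\alpha}(\partial\Omega))$. For $M_2$ and $M_3$ the kernels $R_{S_n(\cdot,k)}(\epsilon(t-s))$ and $T^k_n(\epsilon(t-s))$ are real analytic in $(\epsilon, t, s)$ on $\mathopen]-\epsilon_0,\epsilon_0[\times\partial\Omega\times\partial\Omega$ (for $R_{S_n(\cdot,k)}$ this uses Proposition \ref{qnSR} i) and the fact that $\epsilon(t-s)$ lies near the origin where $R_{S_n(\cdot,k)}$ is analytic; for $T^k_n$ it is the real analyticity recalled in Subsection \ref{hlm:ss:notation}), so these are smooth-kernel integral operators depending analytically on a parameter, and standard results on integral operators with real analytic kernels (e.g. the references on smooth/analytic kernels cited after Proposition \ref{prop:dlp}) give the claim. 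The delicate one is $M_1[\epsilon](\theta)(t) = \int_{\partial\Omega} S_n(t-s,\epsilon k)\theta(s)\,d\sigma_s$: here the kernel $S_n(t-s,\epsilon k)$ is \emph{singular} on the diagonal $t=s$, with a $\log|t-s|$ (for $n$ even) or $|t-s|^{2-n}$ (for $n\geq 3$) singularity, but it depends on $\epsilon$ only through the \emph{holomorphic} parameter $\epsilon k$ inside $S_n(\cdot,\cdot)$ — precisely the holomorphy that Proposition \ref{prop:hlmfs} ii) provides via $\Upsilon_n$ and the entire functions $\mathcal{J}_n,\mathcal{N}_n$. The strategy is to split $S_n(x,\epsilon k) = S_n(x) + (S_n(x,\epsilon k) - S_n(x))$: the first piece is the $\epsilon$-independent Laplace single layer kernel, whose single layer operator is a fixed bounded operator $C^{0,\alpha}(\partial\Omega)\to C^{1,\alpha}(\partial\Omega)$ (classical Schauder theory for the single layer, as in Lanza de Cristoforis and Rossi \cite{LaRo08} or \cite{GiTr83}); the difference $S_n(x,\epsilon k) - S_n(x) = (\epsilon k)^{n-2}\mathcal{J}_n(\epsilon k|x|)\log|x| + |x|^{2-n}(\mathcal{N}_n(\epsilon k|x|) - \mathcal{N}_n(0))$ has, after expanding $\mathcal{J}_n$ and $\mathcal{N}_n$ in their (entire) power series, a kernel that is real analytic in $(\epsilon, t, s)$ jointly — the apparent $|x|^{2-n}$ is cancelled because $\mathcal{N}_n(\epsilon k|x|)-\mathcal{N}_n(0)$ vanishes to order $|x|^2$ times an entire function of $\epsilon k|x|$, leaving only logarithmic or smooth terms. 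Hence the $\epsilon$-dependent part of $M_1$ is again a real-analytic-kernel operator, and $M_1$ is real analytic. The main obstacle to watch is therefore bookkeeping the cancellation of the strong singularity in $S_n(x,\epsilon k)-S_n(x)$ carefully enough to land in $\mathcal{L}(C^{0,\alpha},C^{1,\alpha})$ (the analyticity should be phrased, as is standard in this circle of ideas, by checking that the map is locally given by a convergent power series in $\epsilon$ with coefficients in that operator space, or by appealing to an abstract lemma on integral operators whose kernels depend analytically on a parameter while retaining a fixed weak singularity); the rest is routine.
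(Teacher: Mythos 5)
The reduction of \eqref{hlm:smp:eq:bd1} to the rescaling identity \eqref{hlm:n:eq:Snk} and a change of variables, and the analyticity argument for $M_2$, $M_3$ via real-analytic-kernel integral operators, both match the paper. The problem is in your treatment of $M_1$, which is the genuinely hard part.

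You claim that after subtracting $S_n(x)$, the kernel $S_n(x,\epsilon k)-S_n(x)$ becomes ``real analytic in $(\epsilon,t,s)$ jointly.'' This is not true. Writing $\mathcal{N}_n$ as its even power series, $\mathcal{N}_n(\epsilon k|x|)-\mathcal{N}_n(0)$ vanishes to order $|x|^2$, so the term $|x|^{2-n}(\mathcal{N}_n(\epsilon k|x|)-\mathcal{N}_n(0))$ starts at order $|x|^{4-n}$; the singularity at $x=0$ is reduced by two orders, not cancelled. For $n\ge 5$ this is still a genuine negative-power singularity, for $n=3$ one gets nonanalytic positive powers $|x|,|x|^3,\dots$, and for $n$ even the first piece $(\epsilon k)^{n-2}\mathcal{J}_n(\epsilon k|x|)\log|x|$ keeps a $\log|x|$ singularity for every $\epsilon\ne 0$. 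Your own closing clause (``retaining a fixed weak singularity'') correctly describes what really happens and contradicts the earlier claim of joint real analyticity. In consequence you cannot invoke the smooth-/analytic-kernel operator results for $M_1$ as you do for $M_2$, $M_3$; the remaining weak singularity is precisely where the difficulty lies.

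The paper does not attempt any such reduction for $M_1$. It cites Lanza de Cristoforis and Rossi \cite[Thm.~4.11]{LaRo08}, which is a dedicated theorem establishing the real analytic dependence on the wave number (and on the support) of the Helmholtz single layer operator between the relevant Schauder spaces, and it is exactly this result that makes holomorphic dependence of $S_n(\cdot,k)$ on $k$ pay off (the paper even remarks this explicitly). After that the paper passes from the joint map $(\epsilon,\theta)\mapsto M_1^\sharp[\epsilon,\theta]$ to the operator-valued map $\epsilon\mapsto M_1[\epsilon]$ by taking the partial Fr\'{e}chet differential in $\theta$. So: your computational skeleton is right, but the key analytic input for $M_1$ must be the cited theorem of \cite{LaRo08}, not a real-analytic-kernel argument, because the kernel is not real analytic.
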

\begin{proof}  Let
\[
M_1^\sharp[\epsilon,\theta](t):= \int_{\partial \Omega}S_n(t-s,\epsilon k)\theta(s)\,d\sigma_s \quad \forall t \in \partial \Omega
\]
for all $(\epsilon,\theta)\in\mathopen]-\epsilon_0,\epsilon_0\mathclose[\times  C^{0,\alpha}(\partial\Omega)$.
By  Lanza de Cristoforis and Rossi \cite[Thm.~4.11]{LaRo08} we deduce that
\[
\mathopen]-\epsilon_0,\epsilon_0\mathclose[\times  C^{0,\alpha}(\partial\Omega)\ni     (\epsilon,\theta)\mapsto M_1^\sharp[\epsilon,\theta]\in C^{1,\alpha}(\partial\Omega)
\]
is real analytic. {Incidentally, we point out that it is exactly in the latter step where we {use} the holomorphic dependence of the fundamental solution upon $k$.}
Since $M_1^\sharp$ is linear and continuous with respect to the variable $\theta$, we have
\[
M_1[\epsilon]=d_\theta M_1^\sharp [\epsilon, \tilde{\theta}]
\qquad\forall (\epsilon, \tilde{\theta})\in \mathopen]-\epsilon_0,\epsilon_0\mathclose[\times  C^{0,\alpha}(\partial\Omega)\,.
\]
where $d_\theta M_1^\sharp [\epsilon, \tilde{\theta}]$ denotes the partial differential with respect to the variable $\theta$ evaluated at the pair $(\epsilon, \tilde{\theta})\in \mathopen]-\epsilon_0,\epsilon_0\mathclose[\times  C^{0,\alpha}(\partial\Omega)$.
Since the right-hand side equals a partial Fr\'{e}chet differential of a  map which is real analytic, the right-hand side is analytic on $(\epsilon, \tilde{\theta})$. Hence $(\epsilon, \tilde{\theta}) \mapsto M_1[\epsilon]$ is real analytic on $\mathopen]-\epsilon_0,\epsilon_0\mathclose[\times  C^{0,\alpha}(\partial\Omega)$ and, since it does not depend on $\tilde{\theta}$, we conclude that it is real analytic on  $\mathopen]-\epsilon_0,\epsilon_0[$. 

A similar argument shows that $M_2$ and $M_3$ are real analytic as well. We just need to replace \cite[Thm.~4.11]{LaRo08} with the analyticity results for the integral operators with real analytic kernel of \cite{LaMu13}. 

Finally, by the definition of $M_1$, $M_2$, and $M_3$, by equality \eqref{hlm:n:eq:Snk},  and by a direct computation based on the theorem of change of variable in integrals, we can verify the validity of equation \eqref{hlm:smp:eq:bd1}.
\end{proof}

By the equality $S_n(\cdot,0)=S_n(\cdot)$ and by standard properties of real analytic maps in Banach spaces, one deduces the validity of the following.

\begin{cor}\label{hlm:smp:cor:bd1}
Let $\alpha \in \mathopen]0,1[$, $q \in \mathbb{D}^+_n(\mathbb{R})$, $\eta \in \mathbb{R}^n$. Let $k \in \mathbb{C}$. Let $\Omega$ be as in assumption \eqref{Omega_def}. Let $p \in Q$. Let $\epsilon_0$ be as in assumption \eqref{epsilon_0}. Let $M_2$, $M_3$ be  as in Proposition \ref{hlm:smp:cor:bd1}. Then there exist $\tilde{\epsilon}'_0 \in \mathopen]0,\epsilon_0[$ and a real analytic map $\tilde{M}_1$  from $\mathopen]-\tilde{\epsilon}'_0,\tilde{\epsilon}'_0[$ to $\mathcal{L}(C^{0,\alpha}(\partial \Omega),C^{1,\alpha}(\partial \Omega))$ such that
\[
\begin{split}
\mathcal{S}^{k}_{q,\eta}&[\partial\Omega_{p,\epsilon},\theta((\cdot-p)/\epsilon)](p+\epsilon t)\\
&=\epsilon  \int_{\partial \Omega}S_n(t-s)\theta(s)\,d\sigma_s +\epsilon^2 \tilde{M}_1[\epsilon](\theta)(t)+\epsilon^{n-1} M_2[\epsilon](\theta)(t)\\
&\quad +\epsilon^{n-1}(\log \epsilon) k^{n-2} M_3[\epsilon](\theta)(t) \qquad \forall t \in \partial \Omega\, ,
\end{split}
\]
for all $\theta \in C^{0,\alpha}(\partial \Omega)$ and $\epsilon \in \mathopen]0,\tilde{\epsilon}'_0[$.
\end{cor}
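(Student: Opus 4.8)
The plan is to split off from the term $\epsilon\,M_1[\epsilon]$ in the representation \eqref{hlm:smp:eq:bd1} of Proposition \ref{hlm:smp:prop:bd1} the contribution of the Laplace fundamental solution, which is exactly the value of $M_1$ at the degenerate parameter $\epsilon=0$. First I would record that, by Proposition \ref{prop:hlmfs} iii), $S_n(\cdot,0)$ is the classical fundamental solution $S_n$ of the Laplace operator, so that for every $\theta\in C^{0,\alpha}(\partial\Omega)$
\[
M_1[0](\theta)(t)=\int_{\partial\Omega}S_n(t-s,0)\,\theta(s)\,d\sigma_s=\int_{\partial\Omega}S_n(t-s)\,\theta(s)\,d\sigma_s\qquad\forall t\in\partial\Omega;
\]
in other words, $M_1[0]$ is precisely the (fixed, $\epsilon$-independent) operator $\theta\mapsto\int_{\partial\Omega}S_n(t-s)\theta(s)\,d\sigma_s$ appearing in the statement.

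Next I would invoke the standard fact that a real analytic map defined on a neighbourhood of $0$ in $\mathbb{R}$ with values in a Banach space and vanishing at $0$ can be written as $\epsilon$ times a real analytic map on a (possibly smaller) neighbourhood of $0$; concretely, if $g$ is real analytic near $0$ with $g(0)=0$, then $\epsilon\mapsto\int_0^1 g'(\tau\epsilon)\,d\tau$ serves as the factor. Since $M_1$ is real analytic from $\mathopen]-\epsilon_0,\epsilon_0[$ to $\mathcal{L}(C^{0,\alpha}(\partial\Omega),C^{1,\alpha}(\partial\Omega))$ by Proposition \ref{hlm:smp:prop:bd1}, the map $\epsilon\mapsto M_1[\epsilon]-M_1[0]$ is real analytic with the same target and vanishes at $\epsilon=0$. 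Hence there exist $\tilde{\epsilon}'_0\in\mathopen]0,\epsilon_0[$ and a real analytic map $\tilde{M}_1$ from $\mathopen]-\tilde{\epsilon}'_0,\tilde{\epsilon}'_0[$ to $\mathcal{L}(C^{0,\alpha}(\partial\Omega),C^{1,\alpha}(\partial\Omega))$ with $M_1[\epsilon]=M_1[0]+\epsilon\,\tilde{M}_1[\epsilon]$ on $\mathopen]-\tilde{\epsilon}'_0,\tilde{\epsilon}'_0[$.

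Finally I would substitute this identity into \eqref{hlm:smp:eq:bd1}, obtaining $\epsilon\,M_1[\epsilon]=\epsilon\,M_1[0]+\epsilon^2\,\tilde{M}_1[\epsilon]$, replace $\epsilon\,M_1[0](\theta)(t)$ by $\epsilon\int_{\partial\Omega}S_n(t-s)\theta(s)\,d\sigma_s$, and leave the $M_2$ and $M_3$ terms untouched, since they are already real analytic in $\epsilon$ on $\mathopen]-\epsilon_0,\epsilon_0[$ and hence a fortiori on $\mathopen]-\tilde{\epsilon}'_0,\tilde{\epsilon}'_0[$. This yields the claimed representation for $\epsilon\in\mathopen]0,\tilde{\epsilon}'_0[$. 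I do not expect a genuine obstacle here: the only mildly delicate point is the factorisation lemma for Banach-space-valued real analytic functions vanishing at the origin, together with the attendant (harmless) shrinking of the parameter interval, which is routine in the Functional Analytic Approach.
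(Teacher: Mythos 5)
Your proof is correct and follows essentially the same route as the paper, which justifies the corollary simply by invoking the equality $S_n(\cdot,0)=S_n(\cdot)$ and standard properties of real analytic maps in Banach spaces. Your explicit factorisation of $M_1[\epsilon]-M_1[0]$ as $\epsilon\,\tilde M_1[\epsilon]$ via $\tilde M_1[\epsilon]=\int_0^1 M_1'[\tau\epsilon]\,d\tau$ is exactly the ``standard property'' the paper has in mind (and in fact, for a real analytic map one may even take $\tilde\epsilon_0'=\epsilon_0$, so the shrinking you allow is harmless surplus).
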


We now turn to the map from $C^{0,\alpha}(\partial \Omega)$ to $C^{0,\alpha}(\partial \Omega)$ that takes a density $\theta$ to the function  
\[
\left(\mathcal{K}^{k}_{q,\eta}\right)^*[\partial\Omega_{p,\epsilon},\theta((\cdot-p)/\epsilon)](p+\epsilon t)\quad\forall t\in\partial\Omega\, ,
\]
which appears in the formula for the normal derivative of the single layer potential. We study its dependence upon $\epsilon$.
\begin{prop}\label{hlm:smp:prop:bd2}
Let $\alpha \in \mathopen]0,1[$, $q \in \mathbb{D}^+_n(\mathbb{R})$, $\eta \in \mathbb{R}^n$. Let $k \in \mathbb{C}$. Let $\Omega$ be as in assumption \eqref{Omega_def}. Let $p \in Q$. Let $\epsilon_0$ be as in assumption \eqref{epsilon_0}. Let $N_1$, $N_2$, $N_3$ be the maps from $\mathopen]-\epsilon_0,\epsilon_0[$ to $\mathcal{L}(C^{0,\alpha}(\partial \Omega),C^{0,\alpha}(\partial \Omega))$, defined by
\begin{align*}
& N_1[\epsilon](\theta)(t):= \int_{\partial \Omega}\nu_{\Omega}(t)\cdot \nabla S_n(t-s,\epsilon k)\theta(s)\,d\sigma_s \quad \forall t \in \partial \Omega,\\
& N_2[\epsilon](\theta)(t):= \int_{\partial \Omega}\nu_{\Omega}(t)\cdot \nabla R_{S_n(\cdot,k)}(\epsilon(t-s))\theta(s)\,d\sigma_s \quad \forall t \in \partial \Omega,\\
& N_3[\epsilon](\theta)(t):= \int_{\partial \Omega}\nu_{\Omega}(t)\cdot \nabla T^k_n(\epsilon(t-s))\theta(s)\,d\sigma_s \quad \forall t \in \partial \Omega,
\end{align*}
for all $\theta \in C^{0,\alpha}(\partial \Omega)$ and $\epsilon \in \mathopen]-\epsilon_0,\epsilon_0[$. Then $N_1$, $N_2$, $N_3$ are real analytic and we have
\begin{equation}\label{hlm:smp:eq:bd2}
\begin{split}
\left(\mathcal{K}^{k}_{q,\eta}\right)^*&[\partial\Omega_{p,\epsilon},\theta((\cdot-p)/\epsilon)](p+\epsilon t)\\
&= N_1[\epsilon](\theta)(t)+\epsilon^{n-1} N_2[\epsilon](\theta)(t)+\epsilon^{n-1}(\log \epsilon) k^{n-2} N_3[\epsilon](\theta)(t) \qquad \forall t \in \partial \Omega\, ,
\end{split}
\end{equation}
for all $\theta \in C^{0,\alpha}(\partial \Omega)$ and $\epsilon \in \mathopen]0,\epsilon_0[$.\end{prop}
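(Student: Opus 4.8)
The plan is to mirror, as closely as possible, the argument already carried out in the proof of Proposition \ref{hlm:smp:prop:bd1} for the single layer potential, adapting it to the normal-derivative operator $\left(\mathcal{K}^{k}_{q,\eta}\right)^*$. First I would record the analogue of the splitting \eqref{dlp1}: choosing the fundamental solution $\mathbf{G}=S_n(\cdot,k)$ in Proposition \ref{qnSR}, one has $G^{k}_{q,\eta}=S_n(\cdot,k)+R_{S_n(\cdot,k)}$, hence for $x\in\mathbb{R}^n\setminus\partial\mathbb{S}_q[\Omega_{p,\epsilon}]$
\[
\nu_{\Omega_{p,\epsilon}}(x)\cdot\nabla G^{k}_{q,\eta}(x-y)=\nu_{\Omega_{p,\epsilon}}(x)\cdot\nabla S_n(x-y,k)+\nu_{\Omega_{p,\epsilon}}(x)\cdot\nabla R_{S_n(\cdot,k)}(x-y),
\]
valid for $(x,y)$ with $x-y\notin q\mathbb{Z}^n$, which holds for $x,y$ on $\partial\Omega_{p,\epsilon}$ by the same elementary argument used in Proposition \ref{prop:dlp}. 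Then I would perform the change of variables $x=p+\epsilon t$, $y=p+\epsilon s$, so that $x-y=\epsilon(t-s)$, $\nu_{\Omega_{p,\epsilon}}(p+\epsilon t)=\nu_\Omega(t)$, and $d\sigma_y=\epsilon^{n-1}\,d\sigma_s$.

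The key computation is then to substitute the rescaling identity \eqref{hlm:n:eq:DSnk}, namely $\nabla S_n(\epsilon(t-s),k)=\epsilon^{1-n}\nabla S_n(t-s,\epsilon k)+(\log\epsilon)k^{n-2}\nabla T^k_n(\epsilon(t-s))$, into the $S_n(\cdot,k)$-part of the integral. Dotting with $\nu_\Omega(t)$, multiplying by $\theta(s)$, and integrating against $\epsilon^{n-1}\,d\sigma_s$, the factor $\epsilon^{1-n}$ cancels the $\epsilon^{n-1}$ from the surface measure in the leading term, producing $N_1[\epsilon](\theta)(t)$ with no power of $\epsilon$ in front, while the remainder term picks up $\epsilon^{n-1}(\log\epsilon)k^{n-2}$ and gives $N_3[\epsilon](\theta)(t)$. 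The $R_{S_n(\cdot,k)}$-part simply contributes $\epsilon^{n-1}N_2[\epsilon](\theta)(t)$, the $\epsilon^{n-1}$ coming entirely from the rescaled surface measure since $R_{S_n(\cdot,k)}$ is evaluated directly at $\epsilon(t-s)$. This yields \eqref{hlm:smp:eq:bd2}.

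For the real-analyticity statements, I would follow verbatim the scheme of Proposition \ref{hlm:smp:prop:bd1}. For $N_1$, I would introduce the auxiliary map $N_1^\sharp[\epsilon,\theta](t):=\int_{\partial\Omega}\nu_\Omega(t)\cdot\nabla S_n(t-s,\epsilon k)\theta(s)\,d\sigma_s$ and invoke the analyticity of the normal derivative of the single layer potential with respect to both the density and the wave number $k$, i.e., the appropriate statement from Lanza de Cristoforis and Rossi \cite[Thm.~4.11]{LaRo08} (this is the point where holomorphy of $S_n(\cdot,k)$ in $k$ is essential), then differentiate with respect to $\theta$ to get $N_1[\epsilon]=d_\theta N_1^\sharp[\epsilon,\tilde\theta]$, which is real analytic in $(\epsilon,\tilde\theta)$ and independent of $\tilde\theta$, hence real analytic in $\epsilon$ alone. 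For $N_2$ and $N_3$, since $R_{S_n(\cdot,k)}$ and $T^k_n$ are real analytic on a neighbourhood of $0$ (resp. on all of $\mathbb{R}^n$) by Proposition \ref{qnSR} and the definition of $T^k_n$, the kernels $\nu_\Omega(t)\cdot\nabla R_{S_n(\cdot,k)}(\epsilon(t-s))$ and $\nu_\Omega(t)\cdot\nabla T^k_n(\epsilon(t-s))$ are real analytic in $(\epsilon,t,s)$, and I would appeal to the analyticity results for integral operators with real analytic kernel from \cite{LaMu13}.

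The main obstacle is essentially bookkeeping: one must be careful that the $\nu_{\Omega_{p,\epsilon}}(x)$ really does pull back to $\nu_\Omega(t)$ under $x=p+\epsilon t$ with no extra Jacobian factor (it is a unit normal to a scaled copy of $\partial\Omega$, so this is fine), and that the singular-kernel analyticity result one cites for $N_1$ is genuinely the one for the exterior normal derivative of the single layer (the adjoint $\left(\mathcal{K}^{k}\right)^*$), not for the single layer itself — the relevant statement is nonetheless covered by \cite[Thm.~4.11]{LaRo08}. Beyond that, the proof is a direct transcription of the one for Proposition \ref{hlm:smp:prop:bd1}, and no new ideas are needed.
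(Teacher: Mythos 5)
Your argument matches the paper's own proof in every essential respect: the paper establishes the analyticity of $N_1$, $N_2$, $N_3$ by pointing back to the scheme of Proposition \ref{hlm:smp:prop:bd1} (the auxiliary-map trick with \cite[Thm.~4.11]{LaRo08} for the singular kernel and \cite{LaMu13} for the real-analytic kernels), and then obtains \eqref{hlm:smp:eq:bd2} directly from the rescaling identity \eqref{hlm:n:eq:DSnk} and the change of variable $y=p+\epsilon s$, exactly as you do. Your write-up simply spells out the bookkeeping — the splitting $G^{k}_{q,\eta}=S_n(\cdot,k)+R_{S_n(\cdot,k)}$, the pull-back of the normal, and the cancellation of $\epsilon^{n-1}$ against $\epsilon^{1-n}$ in the leading term — that the paper leaves implicit.
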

\begin{proof}
By arguing as in the proof of Proposition \ref{hlm:smp:prop:bd1} for $M_1$, $M_2$, $M_3$, we can verify that $N_1$, $N_2$, $N_3$ are real analytic maps from $\mathopen]-\epsilon_0,\epsilon_0[$ to $\mathcal{L}(C^{0,\alpha}(\partial \Omega),C^{0,\alpha}(\partial \Omega))$. Then, by the definition of $N_1$, $N_2$, and $N_3$, by equality \eqref{hlm:n:eq:DSnk}, and by a direct computation based on the theorem of change of variable in integrals, we verify the validity of equation \eqref{hlm:smp:eq:bd2}.
\end{proof}

{As already seen for Corollary \ref{hlm:smp:cor:bd1},  the equality $S_n(\cdot,0)=S_n(\cdot)$ and standard properties of real analytic maps in Banach spaces imply the validity of the following.}

\begin{cor}\label{hlm:smp:cor:bd2}
Let $\alpha \in \mathopen]0,1[$, $q \in \mathbb{D}^+_n(\mathbb{R})$, $\eta \in \mathbb{R}^n$. Let $k \in \mathbb{C}$. Let $\Omega$ be as in assumption \eqref{Omega_def}. Let $p \in Q$. Let $\epsilon_0$ be as in assumption \eqref{epsilon_0}. Let $N_2$, $N_3$ be   as in Proposition \ref{hlm:smp:cor:bd2}. Then there exist $\tilde{\epsilon}''_0 \in \mathopen]0,\epsilon_0[$ and a real analytic map $\tilde{N}_1$  from $]-\tilde{\epsilon}''_0,\tilde{\epsilon}''_0[$ to $\mathcal{L}(C^{0,\alpha}(\partial \Omega),C^{0,\alpha}(\partial \Omega))$ such that
\[
\begin{split}
\left(\mathcal{K}^{k}_{q,\eta}\right)^*&[\partial\Omega_{p,\epsilon},\theta((\cdot-p)/\epsilon)](p+\epsilon t)\\
&= \int_{\partial \Omega}\nu_{\Omega}(t)\cdot \nabla S_n(t-s)\theta(s)\,d\sigma_s+ \epsilon \tilde{N}_1[\epsilon](\theta)(t)+\epsilon^{n-1} N_2[\epsilon](\theta)(t)\\
&\quad+\epsilon^{n-1}(\log \epsilon) k^{n-2} N_3[\epsilon](\theta)(t) \qquad \forall t \in \partial \Omega\, ,
\end{split}
\]
for all $\theta \in C^{0,\alpha}(\partial \Omega)$ and $\epsilon \in \mathopen]0,\tilde{\epsilon}''_0[$.
\end{cor}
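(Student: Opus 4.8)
The plan is to obtain the statement directly from Proposition~\ref{hlm:smp:prop:bd2} by isolating, inside the real analytic family $\epsilon\mapsto N_1[\epsilon]$, its value at the degenerate parameter $\epsilon=0$. Indeed, Proposition~\ref{hlm:smp:prop:bd2} already provides, for $\epsilon\in\mathopen]0,\epsilon_0\mathclose[$, the identity
\[
\left(\mathcal{K}^{k}_{q,\eta}\right)^*[\partial\Omega_{p,\epsilon},\theta((\cdot-p)/\epsilon)](p+\epsilon t)= N_1[\epsilon](\theta)(t)+\epsilon^{n-1} N_2[\epsilon](\theta)(t)+\epsilon^{n-1}(\log \epsilon)\, k^{n-2}\, N_3[\epsilon](\theta)(t),
\]
together with the real analyticity of $N_1,N_2,N_3$ on $\mathopen]-\epsilon_0,\epsilon_0\mathclose[$. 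The terms carrying $N_2$ and $N_3$ are kept verbatim, so the only thing left to do is to split $N_1[\epsilon]$ into its value at $\epsilon=0$ and a remainder of order $\epsilon$.

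For this I would invoke the standard factorization for Banach-space-valued real analytic maps: if $f\colon \mathopen]-\epsilon_0,\epsilon_0\mathclose[\to X$ is real analytic, then $f(\epsilon)=f(0)+\epsilon\,\tilde f(\epsilon)$ for a map $\tilde f$ that is again real analytic on $\mathopen]-\epsilon_0,\epsilon_0\mathclose[$ (away from $0$ one has $\tilde f(\epsilon)=(f(\epsilon)-f(0))/\epsilon$, a quotient of real analytic maps with non-vanishing scalar denominator; near $0$ one simply reads $\tilde f$ off the local power series of $f$, whose zeroth coefficient is $f(0)$). Applying this with $X=\mathcal{L}(C^{0,\alpha}(\partial\Omega),C^{0,\alpha}(\partial\Omega))$ and $f=N_1$ yields a real analytic map $\tilde N_1$ on $\mathopen]-\epsilon_0,\epsilon_0\mathclose[$, hence on every $\mathopen]-\tilde{\epsilon}''_0,\tilde{\epsilon}''_0\mathclose[$ with $\tilde{\epsilon}''_0\in\mathopen]0,\epsilon_0\mathclose[$, such that $N_1[\epsilon]=N_1[0]+\epsilon\,\tilde N_1[\epsilon]$.

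It then only remains to identify $N_1[0]$, which is immediate: by its very definition in Proposition~\ref{hlm:smp:prop:bd2} it is the operator with kernel $\nu_{\Omega}(t)\cdot\nabla S_n(t-s,0)$, and $S_n(\cdot,0)=S_n(\cdot)$ by Proposition~\ref{prop:hlmfs} (so also $\nabla S_n(\cdot,0)=\nabla S_n(\cdot)$), whence
\[
N_1[0](\theta)(t)=\int_{\partial \Omega}\nu_{\Omega}(t)\cdot \nabla S_n(t-s)\,\theta(s)\,d\sigma_s\qquad\forall t\in\partial\Omega,
\]
i.e. $N_1[0]$ is the boundary operator associated with the single layer potential for the Laplace equation. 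Substituting $N_1[\epsilon]=N_1[0]+\epsilon\,\tilde N_1[\epsilon]$, with $N_1[0]$ rewritten as above, into the identity of Proposition~\ref{hlm:smp:prop:bd2} produces exactly the claimed expansion for $\epsilon\in\mathopen]0,\tilde{\epsilon}''_0\mathclose[$. No genuine difficulty arises here; the only point deserving a line of justification is the factorization lemma just described — this is precisely what ``standard properties of real analytic maps in Banach spaces'' refers to — and the (harmless) observation that $\tilde N_1$ is in fact real analytic on all of $\mathopen]-\epsilon_0,\epsilon_0\mathclose[$, so restricting to a smaller interval in the statement costs nothing. The argument is the verbatim analogue of the passage from Proposition~\ref{hlm:smp:prop:bd1} to Corollary~\ref{hlm:smp:cor:bd1}.
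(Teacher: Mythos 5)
Your proposal is correct and follows exactly the route the paper intends: the paper's one-line justification (``the equality $S_n(\cdot,0)=S_n(\cdot)$ and standard properties of real analytic maps in Banach spaces'') is precisely the factorization $N_1[\epsilon]=N_1[0]+\epsilon\,\tilde N_1[\epsilon]$ that you spell out, combined with the identification of $N_1[0]$ as the Laplace single-layer normal-derivative operator. Your observation that $\tilde N_1$ is in fact analytic on all of $\mathopen]-\epsilon_0,\epsilon_0\mathclose[$ is also correct, so the restriction to a strictly smaller $\tilde\epsilon''_0$ is indeed only a matter of matching the statement's wording.
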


{Finally, in} the proposition below, we consider the behavior of the quasi-periodic single layer potential restricted to a set $V$  such that $\overline{V} \cap (p+q\mathbb{Z}^n)=\emptyset$. This somehow characterizes the behavior of ${\mathcal{S}^{k,-}_{q,\eta}}[\partial\Omega_{p,\epsilon},\theta((\cdot-p)/\epsilon)](x)$ when $x$ is far from the {holes}.

\begin{prop}\label{hlm:smp:prop:rep}
Let $\alpha \in \mathopen]0,1[$, $q \in \mathbb{D}^+_n(\mathbb{R})$, $\eta \in \mathbb{R}^n$. Let $k \in \mathbb{C}$. Let $\Omega$ be as in assumption \eqref{Omega_def}. Let $p \in Q$. Let $\epsilon_0$ be as in assumption \eqref{epsilon_0}. 
Let $V$ be a bounded open subset of $\mathbb{R}^n$ such that $\overline{V} \cap (p+q\mathbb{Z}^n)=\emptyset$. Let $\epsilon_{V} \in \mathopen]0,\epsilon_0[$ be such that 
\begin{equation}\label{hlm:smp:eq:rep0}
\overline{V} \subseteq \mathbb{S}_q^-[\Omega_{p,\epsilon}] \qquad \forall \epsilon \in \mathopen]-\epsilon_{V},\epsilon_{V}[.
\end{equation}
Let $M$ be the map from $\mathopen]-\epsilon_{V},\epsilon_{V}[$ to  $\mathcal{L}(C^{0,\alpha}(\partial \Omega),C^2(\overline{V}))$ defined by
\[
M[\epsilon](\theta)(x):= \int_{\partial \Omega}{G^{k}_{q,\eta}}(x-p-\epsilon s)\theta(s)\,d\sigma_s  \qquad \forall x \in \overline{V},
\]
for all $\theta \in C^{0,\alpha}(\partial \Omega)$ and $\epsilon \in \mathopen]-\epsilon_{V},\epsilon_{V}[$. Then $M$ is real analytic and we have
\begin{equation}\label{hlm:smp:eq:rep3}
{\mathcal{S}^{k,-}_{q,\eta}}[\partial\Omega_{p,\epsilon},\theta((\cdot-p)/\epsilon)](x)=\epsilon^{n-1} M[\epsilon](\theta)(x) \qquad \forall x \in \overline{V}\, ,
\end{equation}
for all $\theta \in C^{0,\alpha}(\partial \Omega)$ and $\epsilon \in \mathopen]0,\epsilon_{V}[$.
\end{prop}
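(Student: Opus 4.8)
The plan is to derive the identity \eqref{hlm:smp:eq:rep3} by a change of variables in the definition of the single layer potential, and then to obtain the real analyticity of $M$ from the fact that $G^{k}_{q,\eta}$ is real analytic away from $q\mathbb{Z}^n$ (Proposition \ref{qnSR} ii)) together with the standard analyticity results for integral operators with real analytic kernels of \cite{LaMu13}, arguing along the lines of the proof of Proposition \ref{hlm:smp:prop:bd1}. As a preliminary step I would check that all objects involved are well defined. Fix $\epsilon\in]-\epsilon_{V},\epsilon_{V}[$, $x\in\overline{V}$ and $s\in\partial\Omega$. Since $s\in\overline{\Omega}$, the point $p+\epsilon s$ lies in $\overline{\Omega_{p,\epsilon}}$, hence $p+\epsilon s+qz\in\overline{\mathbb{S}_q[\Omega_{p,\epsilon}]}$ for every $z\in\mathbb{Z}^n$; on the other hand, \eqref{hlm:smp:eq:rep0} gives $x\in\mathbb{S}^-_q[\Omega_{p,\epsilon}]=\mathbb{R}^n\setminus\overline{\mathbb{S}_q[\Omega_{p,\epsilon}]}$, and therefore $x-p-\epsilon s\notin q\mathbb{Z}^n$. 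Thus the integrand defining $M[\epsilon](\theta)$ makes sense and, by Proposition \ref{qnSR} ii), is real analytic in $x$ on a neighborhood of $\overline{V}$, so that $M[\epsilon](\theta)\in C^{\infty}(\overline{V})\subseteq C^{2}(\overline{V})$.

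Next, formula \eqref{hlm:smp:eq:rep3} follows from the definition of the single layer potential: writing
\[
\mathcal{S}^{k}_{q,\eta}[\partial\Omega_{p,\epsilon},\theta((\cdot-p)/\epsilon)](x)=\int_{\partial\Omega_{p,\epsilon}}G^{k}_{q,\eta}(x-y)\,\theta((y-p)/\epsilon)\,d\sigma_y
\]
and performing the change of variables $y=p+\epsilon s$ with $s\in\partial\Omega$, for which $d\sigma_y=\epsilon^{n-1}\,d\sigma_s$ when $\epsilon>0$, one immediately obtains $\epsilon^{n-1}M[\epsilon](\theta)(x)$. Since $\overline{V}\subseteq\mathbb{S}^-_q[\Omega_{p,\epsilon}]$, the function $\mathcal{S}^{k,-}_{q,\eta}[\partial\Omega_{p,\epsilon},\theta((\cdot-p)/\epsilon)]$ coincides on $\overline{V}$ with $\mathcal{S}^{k}_{q,\eta}[\partial\Omega_{p,\epsilon},\theta((\cdot-p)/\epsilon)]$, and this proves \eqref{hlm:smp:eq:rep3}.

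For the real analyticity of $M$ I would introduce the auxiliary map $M^{\sharp}$ from $]-\epsilon_{V},\epsilon_{V}[\times C^{0,\alpha}(\partial\Omega)$ to $C^{2}(\overline{V})$ defined by
\[
M^{\sharp}[\epsilon,\theta](x):=\int_{\partial\Omega}G^{k}_{q,\eta}(x-p-\epsilon s)\,\theta(s)\,d\sigma_s\qquad\forall x\in\overline{V}.
\]
Since the map $(\epsilon,x,s)\mapsto G^{k}_{q,\eta}(x-p-\epsilon s)$ is real analytic (being the composition of the real analytic function $G^{k}_{q,\eta}$ on $\mathbb{R}^n\setminus q\mathbb{Z}^n$ with an affine map, whose values remain in $\mathbb{R}^n\setminus q\mathbb{Z}^n$ by the preliminary step), the analyticity results for integral operators with real analytic kernels of \cite{LaMu13} yield that $M^{\sharp}$ is real analytic. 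As $M^{\sharp}$ is linear and continuous in $\theta$, we have $M[\epsilon]=d_\theta M^{\sharp}[\epsilon,\tilde\theta]$ for every $(\epsilon,\tilde\theta)\in]-\epsilon_{V},\epsilon_{V}[\times C^{0,\alpha}(\partial\Omega)$; hence $M[\epsilon]$ equals a partial Fr\'echet differential of a real analytic map and is therefore real analytic in $(\epsilon,\tilde\theta)$, and since it does not depend on $\tilde\theta$ we conclude that $\epsilon\mapsto M[\epsilon]$ is real analytic on $]-\epsilon_{V},\epsilon_{V}[$.

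The main obstacle is of a technical nature and is contained in the preliminary step: one must ensure that $x-p-\epsilon s$ stays in the \emph{open} set $\mathbb{R}^n\setminus q\mathbb{Z}^n$ not merely for $(\epsilon,x,s)\in]-\epsilon_{V},\epsilon_{V}[\times\overline{V}\times\partial\Omega$, but on an open neighborhood of each compact subset $[-\epsilon_{V}',\epsilon_{V}']\times\overline{V}\times\partial\Omega$ with $\epsilon_{V}'<\epsilon_{V}$, so that the abstract results on integral operators with real analytic kernel genuinely apply. This is a routine compactness argument based on \eqref{hlm:smp:eq:rep0} and on the fact that $p+q\mathbb{Z}^n$ is closed and discrete, but it is precisely the point where the hypothesis $\overline{V}\cap(p+q\mathbb{Z}^n)=\emptyset$ is used.
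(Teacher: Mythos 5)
Your argument is correct and follows essentially the same route as the paper: note that $\overline{V}-(p+\epsilon\partial\Omega)\subseteq\mathbb{R}^n\setminus q\mathbb{Z}^n$ for $\epsilon\in\mathopen]-\epsilon_V,\epsilon_V[$, obtain \eqref{hlm:smp:eq:rep3} by the change of variables $y=p+\epsilon s$ in the single layer potential, and deduce real analyticity of $M$ by the same $M^\sharp$/Fr\'echet-differential device used for $M_1$ in Proposition \ref{hlm:smp:prop:bd1} together with the integral-operator analyticity results of \cite{LaMu13}. The paper states this more tersely (``by arguing as in the proof of Proposition \ref{hlm:smp:prop:bd1}''), but your filled-in details, including the compactness remark at the end, match exactly what that reference is meant to cover.
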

\begin{proof}
Since $\epsilon_{V}$ is such that \eqref{hlm:smp:eq:rep0} holds, then we have
\[
\overline{V} -(p+\epsilon \partial \Omega)\subseteq \mathbb{R}^n \setminus q\mathbb{Z}^n \qquad \forall \epsilon \in \mathopen]-\epsilon_{V},\epsilon_{V}[.
\]
By arguing as in the proof of Proposition \ref{hlm:smp:prop:bd1}, one verifies that $M$ is a real analytic {map} from $\mathopen]-\epsilon_{V},\epsilon_{V}[$ to  $\mathcal{L}(C^{0,\alpha}(\partial \Omega),C^2(\overline{V}))$. Then  by the definition of $M$ and by a direct computation based on the theorem of change of variable in integrals, we verify the validity of equation \eqref{hlm:smp:eq:rep3}.
\end{proof}

\subsection{Singular perturbations for the quasi-periodic double layer potential for the Helmholtz equation}

In this subsection, we consider the behavior of the quasi-periodic double layer potential upon singular domain perturbations. As we have done in the previous subsection, we begin by studying the behavior of the quasi-periodic double layer potential restricted to the boundary of {$\Omega_{p,\epsilon}$}. More precisely, we consider the map from $C^{1,\alpha}(\partial \Omega)$ to $C^{1,\alpha}(\partial \Omega)$ {that takes $\theta$ to the function
\[
\mathcal{K}^{k}_{q,\eta}[\partial\Omega_{p,\epsilon},\theta((\cdot-p)/\epsilon)](p+\epsilon t)\, 
\]
of the variable $t \in \partial \Omega$.}

\begin{prop}\label{hlm:dbl:prop:bd2}
Let $\alpha \in \mathopen]0,1[$, $q \in \mathbb{D}^+_n(\mathbb{R})$, $\eta \in \mathbb{R}^n$. Let $k \in \mathbb{C}$. Let $\Omega$ be as in assumption \eqref{Omega_def}. Let $p \in Q$. Let $\epsilon_0$ be as in assumption \eqref{epsilon_0}. Let $P_1$, $P_2$, $P_3$ be the maps from $]-\epsilon_0,\epsilon_0[$ to $\mathcal{L}(C^{1,\alpha}(\partial \Omega),C^{1,\alpha}(\partial \Omega))$, defined by
\begin{align*}
& P_1[\epsilon](\theta)(t):= -\int_{\partial \Omega}\nu_{\Omega}(s)\cdot \nabla S_n(t-s,\epsilon k)\theta(s)\,d\sigma_s \quad \forall t \in \partial \Omega,\\
& P_2[\epsilon](\theta)(t):= -\int_{\partial \Omega}\nu_{\Omega}(s)\cdot \nabla R_{{S_n(\cdot,k)}}(\epsilon(t-s))\theta(s)\,d\sigma_s \quad \forall t \in \partial \Omega,\\
& P_3[\epsilon](\theta)(t):= -\int_{\partial \Omega}\nu_{\Omega}(s)\cdot \nabla {T^k_n}(\epsilon(t-s))\theta(s)\,d\sigma_s \quad \forall t \in \partial \Omega,
\end{align*}
for all $\theta \in C^{1,\alpha}(\partial \Omega)$ and $\epsilon \in \mathopen]-\epsilon_0,\epsilon_0[$. Then $P_1$, $P_2$, $P_3$ are real analytic and we have
\begin{equation}\label{hlm:dbl:eq:bd2}
\begin{split}
\mathcal{K}^{k}_{q,\eta}&[\partial\Omega_{p,\epsilon},\theta((\cdot-p)/\epsilon)](p+\epsilon t)\\
&= P_1[\epsilon](\theta)(t)+\epsilon^{n-1} P_2[\epsilon](\theta)(t)+\epsilon^{n-1}(\log \epsilon) k^{n-2} P_3[\epsilon](\theta)(t) \qquad \forall t \in \partial \Omega\, ,
\end{split}
\end{equation}
for all $\theta \in C^{1,\alpha}(\partial \Omega)$ and $\epsilon \in \mathopen]0,\epsilon_0[$.
\end{prop}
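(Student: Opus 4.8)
The plan is to follow the pattern of the proofs of Propositions~\ref{hlm:smp:prop:bd1} and~\ref{hlm:smp:prop:bd2}: first isolate the singular part of the kernel by comparing $G^{k}_{q,\eta}$ with the classical fundamental solution $S_n(\cdot,k)$, and then reduce everything to the fixed domain $\partial\Omega$ via a change of variables, using the rescaling identity~\eqref{hlm:n:eq:DSnk} to extract the powers of $\epsilon$ and the logarithmic term.

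First I would invoke Proposition~\ref{qnSR} with the choice $\mathbf{G}=S_n(\cdot,k)$, so that $G^{k}_{q,\eta}=S_n(\cdot,k)+R_{S_n(\cdot,k)}$ with $R_{S_n(\cdot,k)}$ real analytic in a neighbourhood of $0$. Writing $\mathcal{K}^{k}_{q,\eta}[\partial\Omega_{p,\epsilon},\theta((\cdot-p)/\epsilon)](p+\epsilon t)$ as a boundary integral over $\partial\Omega_{p,\epsilon}$ and performing the substitution $y=p+\epsilon s$ (so $d\sigma_y=\epsilon^{n-1}\,d\sigma_s$ and $\nu_{\Omega_{p,\epsilon}}(p+\epsilon s)=\nu_\Omega(s)$ since $\epsilon>0$), the argument of the kernel becomes $\epsilon(t-s)$, which stays near $0$ and hence avoids $q\mathbb{Z}^n\setminus\{0\}$ for $\epsilon$ small. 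The remainder contributes directly the term $\epsilon^{n-1}P_2[\epsilon](\theta)(t)$. For the $S_n(\cdot,k)$-part, I would replace $\nabla S_n(\epsilon(t-s),k)$ by its expression from~\eqref{hlm:n:eq:DSnk}: the summand $\epsilon^{-(n-1)}\nabla S_n(t-s,\epsilon k)$ cancels the Jacobian and yields $P_1[\epsilon](\theta)(t)$, while the summand $(\log\epsilon)k^{n-2}\nabla T^k_n(\epsilon(t-s))$ retains the Jacobian and yields $\epsilon^{n-1}(\log\epsilon)k^{n-2}P_3[\epsilon](\theta)(t)$. Summing the three contributions gives identity~\eqref{hlm:dbl:eq:bd2}; this is a routine change-of-variables computation.

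For the real analyticity, $P_1$ is the delicate one: as a function of $t\in\partial\Omega$ it equals the restriction to $\partial\Omega$ of the classical double layer potential for $\Delta+(\epsilon k)^2$ with density $\theta$, and I would appeal to Lanza de Cristoforis and Rossi~\cite[Thm.~4.11]{LaRo08} to get that $(\epsilon,\theta)\mapsto P_1[\epsilon](\theta)$ is real analytic from $\mathopen]-\epsilon_0,\epsilon_0\mathclose[\times C^{1,\alpha}(\partial\Omega)$ into $C^{1,\alpha}(\partial\Omega)$ — this is exactly the step in which the holomorphic dependence of the family $\{S_n(\cdot,k)\}_{k\in\mathbb{C}}$ on the wave number is used. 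Since $P_1[\epsilon]$ is linear and continuous in $\theta$, one then recovers the real analyticity of $\epsilon\mapsto P_1[\epsilon]$ by passing to the partial Fr\'echet differential in $\theta$, exactly as was done for $M_1$ in the proof of Proposition~\ref{hlm:smp:prop:bd1}. For $P_2$ and $P_3$ the kernels $\nu_\Omega(s)\cdot\nabla R_{S_n(\cdot,k)}(\epsilon(t-s))$ and $\nu_\Omega(s)\cdot\nabla T^k_n(\epsilon(t-s))$ are real analytic jointly in $(\epsilon,t,s)$ on a neighbourhood of $\mathopen]-\epsilon_0,\epsilon_0\mathclose[\times\partial\Omega\times\partial\Omega$ — for $R_{S_n(\cdot,k)}$ by Proposition~\ref{qnSR}~i) together with the fact that $\epsilon(t-s)$ ranges near $0$, and for $T^k_n$ because it is real analytic on all of $\mathbb{R}^n$ — so I would invoke the analyticity theorems for integral operators with real analytic kernel of~\cite{LaMu13}, just as for $M_2,M_3$.

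The main obstacle is therefore concentrated in the real analyticity of $P_1$: it hinges on the non-trivial fact that the classical double layer potential depends real analytically on the complex wave number, which in turn relies on working with the holomorphic family $S_n(\cdot,k)$ rather than the standard scattering-theory fundamental solution. The remaining arguments — the change of variables, the extraction of $\epsilon$-powers via~\eqref{hlm:n:eq:DSnk}, and the real analyticity of $P_2,P_3$ — are entirely parallel to what has already been carried out in Propositions~\ref{hlm:smp:prop:bd1} and~\ref{hlm:smp:prop:bd2}.
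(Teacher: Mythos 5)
Your proposal is correct and follows essentially the same route as the paper's proof: decompose $G^{k}_{q,\eta}=S_n(\cdot,k)+R_{S_n(\cdot,k)}$, rescale via the change of variables and identity~\eqref{hlm:n:eq:DSnk} to obtain~\eqref{hlm:dbl:eq:bd2}, obtain analyticity of $P_1$ from Lanza de Cristoforis--Rossi~\cite[Thm.~4.11]{LaRo08} together with the partial Fr\'echet differential argument, and handle $P_2,P_3$ via the real-analytic-kernel results of~\cite{LaMu13}. The only cosmetic difference is that you spell out the identification of $P_1[\epsilon](\theta)$ with the classical double layer operator for wave number $\epsilon k$, which the paper leaves implicit.
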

\begin{proof}
By  Lanza de Cristoforis and Rossi \cite[Thm.~4.11]{LaRo08},  we deduce that
\[
\begin{split}
\mathopen]-\epsilon_0,\epsilon_0\mathclose[\times  C^{0,\alpha}(\partial\Omega)\ni     (\epsilon,\theta)\mapsto P_1^\sharp[\epsilon,\theta](t):=  -\int_{\partial \Omega}\nu_{\Omega}(s)\cdot \nabla S_n(t-s,\epsilon k)&\theta(s)\,d\sigma_s \\ 
&\forall t \in \partial \Omega,
\end{split}
\]
is real analytic.
Since $P_1^\sharp$ is linear and continuous with respect to the variable $\theta$, we have
\[
P_1[\epsilon]=d_\theta P_1^\sharp [\epsilon, \tilde{\theta}]
\qquad\forall (\epsilon, \tilde{\theta})\in \mathopen]-\epsilon_0,\epsilon_0\mathclose[\times  C^{1,\alpha}(\partial\Omega)\,.
\]
where $d_\theta P_1^\sharp [\epsilon, \tilde{\theta}]$ denotes the partial differential with respect to the variable $\theta$ evaluated at the pair $(\epsilon, \tilde{\theta})\in \mathopen]-\epsilon_0,\epsilon_0\mathclose[\times  C^{1,\alpha}(\partial\Omega)$.
Since the right-hand side equals a partial Fr\'{e}chet differential of a  map which is real analytic, the right-hand side is analytic on $(\epsilon, \tilde{\theta})$. Hence $(\epsilon, \tilde{\theta}) \mapsto P_1[\epsilon]$ is real analytic on $\mathopen]-\epsilon_0,\epsilon_0\mathclose[\times  C^{1,\alpha}(\partial\Omega)$ and, since it does not depend on $\tilde{\theta}$, we conclude that it is real analytic on  $\mathopen]-\epsilon_0,\epsilon_0[$.  By exploiting  the regularity results for the integral operators with real analytic kernel of \cite{LaMu13} and by arguing so as to prove the real analyticity of $P_1$, we can prove that $P_2$ and $P_3$ are real analytic.  Finally, by the definition of $P_1$, $P_2$, and $P_3$, by equality \eqref{hlm:n:eq:DSnk}, and by a direct computation based on the theorem of change of variable in integrals, we verify the validity of equation \eqref{hlm:dbl:eq:bd2}.
\end{proof}

{As is the previous subsection, one can immediately deduce the following Corollary \ref{hlm:dbl:cor:bd2}.} 

\begin{cor}\label{hlm:dbl:cor:bd2}
Let $\alpha \in \mathopen]0,1[$, $q \in \mathbb{D}^+_n(\mathbb{R})$, $\eta \in \mathbb{R}^n$. Let $k \in \mathbb{C}$. Let $\Omega$ be as in assumption \eqref{Omega_def}. Let $p \in Q$. Let $\epsilon_0$ be as in assumption \eqref{epsilon_0}. Let $P_2$, $P_3$ be   as in Proposition \ref{hlm:dbl:cor:bd2}. Then there exist $\tilde{\epsilon}'''_0 \in \mathopen]0,\epsilon_0[$ and a real analytic map $\tilde{P}_1$  from $\mathopen]-\tilde{\epsilon}'''_0,\tilde{\epsilon}'''_0[$ to $\mathcal{L}(C^{1,\alpha}(\partial \Omega),C^{1,\alpha}(\partial \Omega))$ such that
\[
\begin{split}
\mathcal{K}^{k}_{q,\eta}&[\partial\Omega_{p,\epsilon},\theta((\cdot-p)/\epsilon)](p+\epsilon t)\\
&= -\int_{\partial \Omega}\nu_{\Omega}(s)\cdot \nabla S_n(t-s)\theta(s)\,d\sigma_s+\epsilon \tilde{P}_1[\epsilon](\theta)(t)+\epsilon^{n-1} P_2[\epsilon](\theta)(t)\\
&\quad +\epsilon^{n-1}(\log \epsilon) k^{n-2} P_3[\epsilon](\theta)(t) \qquad \forall t \in \partial \Omega\, ,
\end{split}
\]
for all $\theta \in C^{1,\alpha}(\partial \Omega)$ and $\epsilon \in \mathopen]0,\tilde{\epsilon}'''_0[$.
\end{cor}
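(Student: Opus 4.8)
The plan is to deduce the statement from the decomposition already obtained in Proposition \ref{hlm:dbl:prop:bd2}, by isolating the value at $\epsilon=0$ of the real analytic operator-valued map $P_1$. First I would observe that, since $S_n(\cdot,0)=S_n$ is the classical fundamental solution of the Laplace operator (Proposition \ref{prop:hlmfs} iii)), one has
\[
P_1[0](\theta)(t) = -\int_{\partial\Omega}\nu_{\Omega}(s)\cdot\nabla S_n(t-s)\theta(s)\,d\sigma_s \qquad \forall t\in\partial\Omega\, ,
\]
for all $\theta\in C^{1,\alpha}(\partial\Omega)$; this is exactly the operator appearing as the leading term in the statement of the corollary.

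Next I would invoke the standard fact that a real analytic map $f$ from a neighbourhood of $0$ in $\mathbb{R}$ into a Banach space $X$ can be written, on a possibly smaller neighbourhood of $0$, as $f(\epsilon)=f(0)+\epsilon\,\tilde f(\epsilon)$ with $\tilde f$ an $X$-valued real analytic map: indeed, if $f(\epsilon)=\sum_{j\ge 0}a_j\epsilon^j$ for $|\epsilon|$ small, then $\tilde f(\epsilon):=\sum_{j\ge 0}a_{j+1}\epsilon^j$ has the same radius of convergence and satisfies $f(\epsilon)=f(0)+\epsilon\,\tilde f(\epsilon)$ for $|\epsilon|$ small. Applying this with $X=\mathcal{L}(C^{1,\alpha}(\partial\Omega),C^{1,\alpha}(\partial\Omega))$ and $f=P_1$ — which is real analytic on $\mathopen]-\epsilon_0,\epsilon_0[$ by Proposition \ref{hlm:dbl:prop:bd2} — yields $\tilde{\epsilon}'''_0\in\mathopen]0,\epsilon_0[$ and a real analytic map $\tilde P_1$ from $\mathopen]-\tilde{\epsilon}'''_0,\tilde{\epsilon}'''_0[$ to $\mathcal{L}(C^{1,\alpha}(\partial\Omega),C^{1,\alpha}(\partial\Omega))$ with $P_1[\epsilon]=P_1[0]+\epsilon\,\tilde P_1[\epsilon]$ for all $\epsilon\in\mathopen]-\tilde{\epsilon}'''_0,\tilde{\epsilon}'''_0[$.

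Finally I would substitute $P_1[\epsilon]=P_1[0]+\epsilon\,\tilde P_1[\epsilon]$, together with the explicit expression for $P_1[0]$ found above, into formula \eqref{hlm:dbl:eq:bd2} and restrict $\epsilon$ to $\mathopen]0,\tilde{\epsilon}'''_0[$; this immediately produces the asserted identity, the terms $\epsilon^{n-1}P_2[\epsilon]$ and $\epsilon^{n-1}(\log\epsilon)k^{n-2}P_3[\epsilon]$ being carried over verbatim from \eqref{hlm:dbl:eq:bd2}. I do not expect any genuine obstacle here; the only points requiring a little care are the passage from the convergent power series of the operator-valued map $P_1$ to that of $\tilde P_1$ and the identification of $P_1[0]$ via $S_n(\cdot,0)=S_n$, both of which are routine. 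This is precisely the same scheme by which Corollaries \ref{hlm:smp:cor:bd1} and \ref{hlm:smp:cor:bd2} were deduced from Propositions \ref{hlm:smp:prop:bd1} and \ref{hlm:smp:prop:bd2}.
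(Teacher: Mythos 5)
Your proposal is correct and coincides with the paper's own (implicit) argument: the paper deduces Corollary~\ref{hlm:dbl:cor:bd2} from Proposition~\ref{hlm:dbl:prop:bd2} precisely by using $S_n(\cdot,0)=S_n$ to identify $P_1[0]$ and by the standard factorization $P_1[\epsilon]=P_1[0]+\epsilon\,\tilde P_1[\epsilon]$ of a Banach-space-valued real analytic map, which is what you spelled out explicitly.
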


In the proposition below, we consider the behavior of the quasi-periodic double layer potential far from the {holes}.

\begin{prop}\label{hlm:dbl:prop:rep}
Let $\alpha \in \mathopen]0,1[$, $q \in \mathbb{D}^+_n(\mathbb{R})$, $\eta \in \mathbb{R}^n$. Let $k \in \mathbb{C}$. Let $\Omega$ be as in assumption \eqref{Omega_def}. Let $p \in Q$. Let $\epsilon_0$ be as in assumption \eqref{epsilon_0}. 
Let $V$ be a bounded open subset of $\mathbb{R}^n$ such that $\overline{V} \cap (p+q\mathbb{Z}^n)=\emptyset$. Let $\epsilon_{V} \in \mathopen]0,\epsilon_0[$ be such that 
\begin{equation}\label{hlm:dbl:eq:rep0}
\overline{V} \subseteq \mathbb{S}_q^-[\Omega_{p,\epsilon}] \qquad \forall \epsilon \in \mathopen]-\epsilon_{V},\epsilon_{V}[.
\end{equation}
Let $P$ be the map from $\mathopen]-\epsilon_{V},\epsilon_{V}[$ to  $\mathcal{L}(C^{1,\alpha}(\partial \Omega),C^2(\overline{V}))$ defined by
\[
P[\epsilon](\theta)(x):= - \int_{\partial \Omega}\nu_\Omega(s)\cdot \nabla {G^{k}_{q,\eta}}(x-p-\epsilon s)\theta(s)\,d\sigma_s  \qquad \forall x \in \overline{V},
\]
for all $\theta \in C^{1,\alpha}(\partial \Omega)$ and $\epsilon \in \mathopen]-\epsilon_{V},\epsilon_{V}[$. Then $P$ is real analytic and we have
\begin{equation}\label{hlm:dbl:eq:rep2}
\mathcal{D}^{k,-}_{q,\eta}[\partial\Omega_{p,\epsilon},\theta((\cdot-p)/\epsilon)](x)=\epsilon^{n-1} P[\epsilon](\theta)(x) \qquad \forall x \in \overline{V}\, ,
\end{equation}
for all $\theta \in C^{1,\alpha}(\partial \Omega)$ and $\epsilon \in \mathopen]0,\epsilon_{V}[$.
\end{prop}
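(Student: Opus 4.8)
The plan is to follow the same blueprint used in Propositions~\ref{hlm:smp:prop:rep} and \ref{hlm:smp:prop:bd1}, adapting it to the double layer potential. First I would observe that, since $\overline{V}\cap(p+q\mathbb{Z}^n)=\emptyset$ and \eqref{hlm:dbl:eq:rep0} holds, one has
\[
\overline{V}-(p+\epsilon\partial\Omega)\subseteq \mathbb{R}^n\setminus q\mathbb{Z}^n\qquad\forall\epsilon\in\mathopen]-\epsilon_V,\epsilon_V[\,,
\]
so that, by Proposition~\ref{qnSR}~ii), the function $G^k_{q,\eta}$ is real analytic on the (open neighbourhood of the) set $\{x-p-\epsilon s:x\in\overline V,\ s\in\partial\Omega,\ \epsilon\in\mathopen]-\epsilon_V,\epsilon_V[\}$. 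In particular the kernel $(x,\epsilon,s)\mapsto \nu_\Omega(s)\cdot\nabla G^k_{q,\eta}(x-p-\epsilon s)$ is real analytic in all its variables on the relevant domain, with values that, together with two $x$-derivatives, depend continuously on the data.

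Next I would invoke the analyticity results for integral operators with real analytic kernel from \cite{LaMu13} (exactly as in the proof of Proposition~\ref{hlm:smp:prop:rep}) to conclude that the map
\[
\mathopen]-\epsilon_V,\epsilon_V\mathclose[\times C^{1,\alpha}(\partial\Omega)\ni(\epsilon,\theta)\mapsto P^\sharp[\epsilon,\theta](x):=-\int_{\partial\Omega}\nu_\Omega(s)\cdot\nabla G^k_{q,\eta}(x-p-\epsilon s)\,\theta(s)\,d\sigma_s\in C^2(\overline V)
\]
is real analytic; since $P^\sharp$ is linear and continuous in $\theta$, its partial differential $d_\theta P^\sharp[\epsilon,\tilde\theta]=P[\epsilon]$ is again real analytic in $(\epsilon,\tilde\theta)$ and independent of $\tilde\theta$, hence $P$ is a real analytic map from $\mathopen]-\epsilon_V,\epsilon_V[$ to $\mathcal{L}(C^{1,\alpha}(\partial\Omega),C^2(\overline V))$.

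Finally I would establish \eqref{hlm:dbl:eq:rep2} by a direct change of variables: for $\epsilon\in\mathopen]0,\epsilon_V[$ and $x\in\overline V\subseteq\mathbb{S}^-_q[\Omega_{p,\epsilon}]$,
\[
\mathcal{D}^{k,-}_{q,\eta}[\partial\Omega_{p,\epsilon},\theta((\cdot-p)/\epsilon)](x)=-\int_{\partial\Omega_{p,\epsilon}}\nu_{\Omega_{p,\epsilon}}(y)\cdot\nabla G^k_{q,\eta}(x-y)\,\theta((y-p)/\epsilon)\,d\sigma_y,
\]
and substituting $y=p+\epsilon s$, using $\nu_{\Omega_{p,\epsilon}}(p+\epsilon s)=\nu_\Omega(s)$ and $d\sigma_y=\epsilon^{n-1}d\sigma_s$, yields precisely $\epsilon^{n-1}P[\epsilon](\theta)(x)$. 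I do not expect a serious obstacle here: the only point requiring a little care is checking that the kernel and its $x$-derivatives up to second order stay away from the singular set $q\mathbb{Z}^n$ uniformly for $\epsilon$ in a neighbourhood of $0$ and $x\in\overline V$, which is guaranteed by \eqref{hlm:dbl:eq:rep0} together with the compactness of $\overline V$ and $\partial\Omega$; the target space $C^2(\overline V)$ (rather than $C^\infty$) is chosen exactly so that the analyticity theorem of \cite{LaMu13} applies directly.
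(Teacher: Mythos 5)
Your argument is correct and follows essentially the same route as the paper's proof: the inclusion $\overline{V}-(p+\epsilon\partial\Omega)\subseteq\mathbb{R}^n\setminus q\mathbb{Z}^n$, real analyticity via the results of \cite{LaMu13} on integral operators with real analytic kernel together with the $d_\theta$-differential device already used in Proposition \ref{hlm:dbl:prop:bd2}, and a change of variables $y=p+\epsilon s$ for the scaling identity. The paper compresses the analyticity step to ``arguing as in the proof of Proposition \ref{hlm:dbl:prop:bd2}'', which is exactly the $P^\sharp$ / partial-differential argument you spelled out.
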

\begin{proof}
Since $\epsilon_{V}$ is such that \eqref{hlm:dbl:eq:rep0} holds, then we have
\[
\overline{V} -(p+\epsilon \partial \Omega)\subseteq \mathbb{R}^n \setminus q\mathbb{Z}^n \qquad \forall \epsilon \in \mathopen]-\epsilon_{V},\epsilon_{V}[.
\]
By arguing as in the proof of Proposition \ref{hlm:dbl:prop:bd2}, one verifies that $P$ is a real analytic map from $\mathopen]-\epsilon_{V},\epsilon_{V}[$ to  $\mathcal{L}(C^{1,\alpha}(\partial \Omega),C^2(\overline{V}))$. Then  by the definition of $P$ and by a direct computation based on the theorem of change of variable in integrals, we verify the validity of equation \eqref{hlm:dbl:eq:rep2}.
\end{proof}

\section{A singularly perturbed nonlinear quasi-periodic boundary value problem for the Helmholtz equation}\label{s:singbvp}

\begin{figure}
\begin{center}
\includegraphics[width=3.7in]{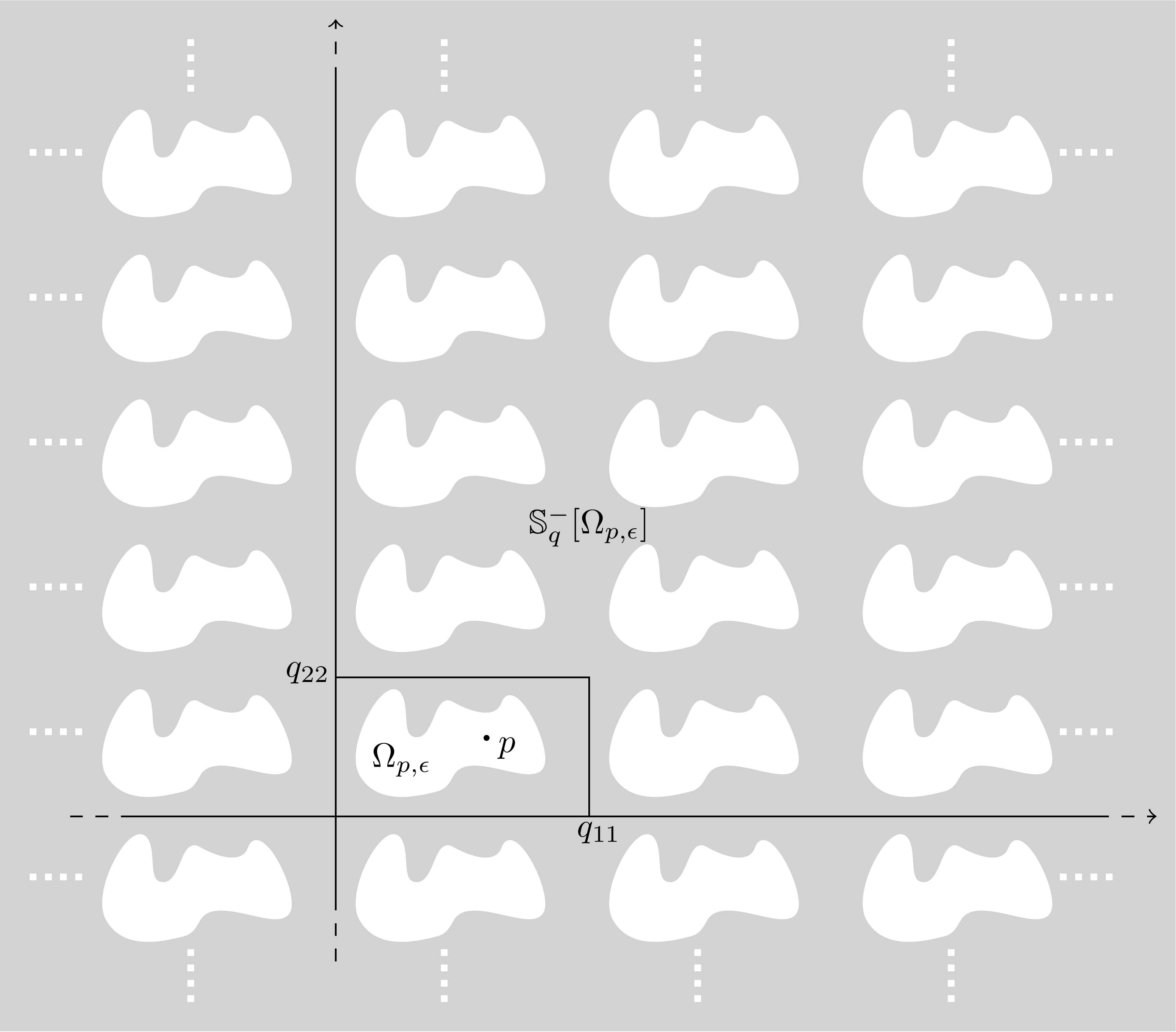}
\caption{{ An example, in dimension $n=2$, of the geometric setting. The grey region is the periodically perforeted set
${\mathbb{S}_q^-[\Omega_{p,\epsilon}]}$. }}
\label{fig:geom-sett}
\end{center}
\end{figure}

In this section, we study the asymptotic behavior of a nonlinear quasi-periodic Robin boundary value problem for the Helmholtz equation in the singularly perturbed domain $\mathbb{S}^-_q[\Omega_{p,\epsilon}]$ as $\epsilon \to 0^+$ (see Figure 
\ref{fig:geom-sett}). We choose this specific problem  to show that our method can be applied to a nonlinear boundary value problem for the quasi-periodic Helmholtz equation. The nonlinearity comes from the boundary condition, which prescribes that the  normal derivative of the solution equals a nonlinear function of its trace on the boundary. The strategy that we are going to use in our analysis was successfully employed by Lanza de Cristoforis in \cite{La07} to study a nonlinear Robin problem for the Laplace equation in a bounded domain with a small hole, and then later in \cite{LaMu13bis} for the case of a periodically perforated domain. An application to nonlinear problems for the Lam\'e equations can be found, for example, in \cite{DaLa11}.

Let $\epsilon_0$ be as in assumption \eqref{epsilon_0}.  {Let {$k \in \mathbb{C}$ be such} that $k^2 \notin \sigma_{q,\eta}(-\Delta)$.}
Possibly taking   $\epsilon^\#_0 \in \mathopen]0,\epsilon_0[$ small enough, we {can} assume that 
\begin{equation}\label{assepsbis}
k^2 \notin  \sigma^N_{q,\eta}(-\Delta, \mathbb{S}^-_q[\Omega_{p,\epsilon}])\, , \qquad k^2 \notin \sigma^{D}(-\Delta,\Omega_{p,\epsilon}) \qquad \forall \epsilon \in \mathopen]0,\epsilon^\#_0[\, {.}
\end{equation}
{{Condition} $k^2 \notin \sigma^{D}(-\Delta,\Omega_{p,\epsilon})$ follows by a simple rescaling argument {since  
\[
\sigma^{D}(-\Delta,\Omega_{p,\epsilon})=\epsilon^{-2} \sigma^{D}(-\Delta,\Omega).
\]}
{Concerning} condition $k^2 \notin  \sigma^N_{q,\eta}(-\Delta, \mathbb{S}^-_q[\Omega_{p,\epsilon}])$, {
 the }} argument of Rauch and Taylor \cite{RaTa75}, which can be extended to $(Q,\eta)$-quasi-periodic eigenvalues by arguing exactly as it is done in \cite[Chap.~7]{Mu12bis} for the {periodic case,
 shows that Neumann $(Q,\eta)$-quasi-periodic eigenvalues on $\mathbb{S}_q^-[\Omega_{p,\epsilon}]$ converge to  $(Q,\eta)$-quasi-periodic eigenvalues on $\mathbb{R}^n$ as $\epsilon \to 0^+$}. {Of course we can also assume that} 
\begin{equation}\label{assepster}
\epsilon^\#_0 < \min\{\tilde{\epsilon}'_0,\tilde{\epsilon}''_0\}\, ,
\end{equation}
where $\tilde{\epsilon}'_0$ and $\tilde{\epsilon}''_0$ are as in Corollaries \ref{hlm:smp:cor:bd1} and \ref{hlm:smp:cor:bd2}, respectively. Assumption \eqref{assepsbis} will allow to use the representation formula of Corollary \ref{cor:repneu} for quasi-periodic solutions of the Helmholtz equation, whereas \eqref{assepster} will be used in order to exploit Corollaries \ref{hlm:smp:cor:bd1} and \ref{hlm:smp:cor:bd2} on some boundary integral operators. Let
\[
\mathcal{B}\colon C^{0,\alpha}(\partial\Omega)\to C^{0,\alpha}(\partial\Omega)
\]
{be} a generic map which we will assume to have a certain degree of regularity when needed.
Then for $\epsilon\in\mathopen]0,\epsilon^\#_{0}[$ we consider the following nonlinear problem
\begin{equation}\label{bvpe}
\begin{cases}
\Delta u + k^2 u=0 \qquad &\mbox{ in } \mathbb{S}^-_q[\Omega_{p,\epsilon}],\\
u \, \mbox{is $(Q,\eta)$-quasi-periodic},\\
\partial_{\nu_{\Omega_{p,\epsilon}}} u(x)=\mathcal{B}[u(p+\epsilon \cdot)]((x-p)/\epsilon) \qquad &\forall x \in  \partial\Omega_{p,\epsilon}.
\end{cases}
\end{equation}

{\it A priori}, we do not know whether problem \eqref{bvpe} has solutions or not. We will show that, for $\epsilon$ small enough, problem \eqref{bvpe} has at least one solution that we will denote by $u(\epsilon,\cdot)$. After that, our aim will be to study the asymptotic behavior of $u(\epsilon,\cdot)$ as $\epsilon \to 0^+$ in terms of real analytic maps and of possibly singular but explicitly known functions of the singular perturbation parameter $\epsilon$.  We observe that one could also prove a local uniqueness property for the family of solutions $u(\epsilon,\cdot)$ by following the arguments of \cite{DaMoMu20}.

\subsection{An integral equation formulation of problem (\ref{bvpe})}

We plan to transform problem \eqref{bvpe} into an equivalent integral equation defined on a set that does not depend on $\epsilon$. In a sense, the dependence on $\epsilon$ will  pass from the geometry of the problem (the set $\Omega_{p,\epsilon}$) to the operators in the integral equation. The first step consists in combining the representation formula of Corollary \ref{cor:repneu} and a simple rescaling argument in order to establish a bijection between the solutions of problem \eqref{bvpe} and the solutions of a nonlinear boundary integral equation on $\partial \Omega$.

\begin{prop}\label{prop:bij}
Let $\alpha \in \mathopen]0,1[$, $q \in \mathbb{D}^+_n(\mathbb{R})$, $\eta \in \mathbb{R}^n$. Let {$k \in \mathbb{C}$ be such} that $k^2 \notin \sigma_{q,\eta}(-\Delta)$. Let $\Omega$ be as in assumption \eqref{Omega_def}. Let $p \in Q$. Let $\epsilon^\#_0$ be as in assumptions \eqref{assepsbis} and \eqref{assepster}.  Let $\epsilon \in \mathopen]0,\epsilon^\#_0[$. Let $\mathcal{B}$ be a map from $C^{0,\alpha}(\partial\Omega)$ to $C^{0,\alpha}(\partial\Omega)$. 
Then the map $u_{\epsilon}[\cdot]$ from the set of $\theta\in 
C^{0,\alpha}(\partial\Omega)$ which satisfy the equation
\begin{equation}\label{eq:bij}
\begin{split}
\frac{1}{2}\theta(t)+\left(\mathcal{K}^{k}_{q,\eta}\right)^*&[\partial\Omega_{p,\epsilon},\theta((\cdot-p)/\epsilon)](p+\epsilon t)\\
&={\mathcal{B}[\mathcal{S}^{k}_{q,\eta}[\partial\Omega_{p,\epsilon},\theta((\cdot-p)/\epsilon)](p+\epsilon \cdot)](t)} \qquad \forall t \in \partial \Omega\, ,
\end{split}
\end{equation}
 to the set of solutions $u\in C^{1,\alpha}(\overline{{\mathbb{S}}_q^-[\Omega_{p,\epsilon}]})$ of problem \eqref{bvpe}, that takes $\theta$ to the function
 \[
u_\epsilon[\theta](x):= {\mathcal{S}^{k,-}_{q,\eta}}[\partial\Omega_{p,\epsilon},\theta((\cdot-p)/\epsilon)](x) \qquad \forall x \in \overline{{\mathbb{S}}_q^-[\Omega_{p,\epsilon}]}\, ,
 \]
 is a bijection. 
\end{prop}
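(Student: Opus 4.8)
The plan is to reduce the statement to Corollary~\ref{cor:repneu} applied to the perforated domain $\mathbb{S}^-_q[\Omega_{p,\epsilon}]$, together with the elementary dilation $x=p+\epsilon t$, which identifies $C^{0,\alpha}(\partial\Omega_{p,\epsilon})$ with $C^{0,\alpha}(\partial\Omega)$ (and $C^{1,\alpha}(\partial\Omega_{p,\epsilon})$ with $C^{1,\alpha}(\partial\Omega)$) through the isomorphism $\mu\mapsto\mu(p+\epsilon\,\cdot)$, whose inverse is $\theta\mapsto\theta((\cdot-p)/\epsilon)$. First I would record that, by assumption~\eqref{assepsbis} together with the hypothesis $k^2\notin\sigma_{q,\eta}(-\Delta)$, all the spectral requirements of Theorem~\ref{thm:neupb} and Corollary~\ref{cor:repneu} are met with $\Omega_Q$ replaced by $\Omega_{p,\epsilon}$; in particular $Z_{q,\eta}(k)=\emptyset$, so by Proposition~\ref{prop:slp} every quasi-periodic single layer potential supported on $\partial\Omega_{p,\epsilon}$ satisfies the Helmholtz equation off the boundary, and the operator $\mathcal{M}=\frac{1}{2}\mathbb{I}+\left(\mathcal{K}^{k}_{q,\eta}\right)^*[\partial\Omega_{p,\epsilon},\cdot]$ is a linear homeomorphism of $C^{0,\alpha}(\partial\Omega_{p,\epsilon})$.

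Next I would verify that $u_\epsilon[\cdot]$ does map solutions of \eqref{eq:bij} to solutions of \eqref{bvpe}. Given $\theta\in C^{0,\alpha}(\partial\Omega)$ solving \eqref{eq:bij}, set $\mu:=\theta((\cdot-p)/\epsilon)\in C^{0,\alpha}(\partial\Omega_{p,\epsilon})$ and $u:=\mathcal{S}^{k,-}_{q,\eta}[\partial\Omega_{p,\epsilon},\mu]$. By Proposition~\ref{prop:slp}, $u\in C^{1,\alpha}(\overline{\mathbb{S}^-_q[\Omega_{p,\epsilon}]})$, it is $(Q,\eta)$-quasi-periodic, and it satisfies the Helmholtz equation in $\mathbb{S}^-_q[\Omega_{p,\epsilon}]$, so the first two lines of \eqref{bvpe} hold. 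For the Robin line, the jump formula \eqref{jfs} gives $\partial_{\nu_{\Omega_{p,\epsilon}}}u=\frac{1}{2}\mu+\left(\mathcal{K}^{k}_{q,\eta}\right)^*[\partial\Omega_{p,\epsilon},\mu]$ on $\partial\Omega_{p,\epsilon}$; evaluating at $x=p+\epsilon t$ and using $\nu_{\Omega_{p,\epsilon}}(p+\epsilon t)=\nu_\Omega(t)$, the left-hand side becomes exactly the left-hand side of \eqref{eq:bij}. Since the single layer potential is continuous across $\partial\Omega_{p,\epsilon}$ (Proposition~\ref{prop:slp}~iii)), its trace there agrees with $u$, so $\mathcal{S}^{k}_{q,\eta}[\partial\Omega_{p,\epsilon},\theta((\cdot-p)/\epsilon)](p+\epsilon\,\cdot)$ coincides with $u(p+\epsilon\,\cdot)$ on $\partial\Omega$, and the right-hand side of \eqref{eq:bij} equals $\mathcal{B}[u(p+\epsilon\,\cdot)]$. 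Hence \eqref{eq:bij} is precisely the rescaled Robin condition $\partial_{\nu_{\Omega_{p,\epsilon}}}u(x)=\mathcal{B}[u(p+\epsilon\,\cdot)]((x-p)/\epsilon)$ on $\partial\Omega_{p,\epsilon}$, and $u$ solves \eqref{bvpe}; in particular $u_\epsilon[\theta]\in C^{1,\alpha}(\overline{\mathbb{S}^-_q[\Omega_{p,\epsilon}]})$ is well defined.

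Injectivity then follows quickly: if $u_\epsilon[\theta_1]=u_\epsilon[\theta_2]$, then $\mathcal{S}^{k,-}_{q,\eta}[\partial\Omega_{p,\epsilon},\mu_1-\mu_2]=0$, and taking the exterior normal derivative and using \eqref{jfs} yields $\mathcal{M}[\mu_1-\mu_2]=0$ with $\mu_j=\theta_j((\cdot-p)/\epsilon)$; since $\mathcal{M}$ is injective, $\mu_1=\mu_2$ and thus $\theta_1=\theta_2$ (alternatively one invokes the uniqueness of the density in Corollary~\ref{cor:repneu}). For surjectivity, I would start from an arbitrary solution $u$ of \eqref{bvpe}: being in $C^{1,\alpha}(\overline{\mathbb{S}^-_q[\Omega_{p,\epsilon}]})$, $(Q,\eta)$-quasi-periodic, and a solution of the Helmholtz equation, it is representable by Corollary~\ref{cor:repneu} applied to $\Omega_{p,\epsilon}$, i.e. there is a unique $\mu\in C^{0,\alpha}(\partial\Omega_{p,\epsilon})$ with $u=\mathcal{S}^{k,-}_{q,\eta}[\partial\Omega_{p,\epsilon},\mu]$ and $\mathcal{M}[\mu]=\partial_{\nu_{\Omega_{p,\epsilon}}}u$. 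Setting $\theta:=\mu(p+\epsilon\,\cdot)$ gives $u=u_\epsilon[\theta]$, and inserting the Robin condition of \eqref{bvpe} into $\mathcal{M}[\mu]=\partial_{\nu_{\Omega_{p,\epsilon}}}u$, again using continuity of the single layer across the boundary, rescales to \eqref{eq:bij}, so $\theta$ lies in the domain of $u_\epsilon[\cdot]$. I do not expect a genuine obstacle here: all the analytic work is already contained in Corollary~\ref{cor:repneu} and Proposition~\ref{prop:slp}, and what remains is purely a matter of bookkeeping. The two points one must handle with care are that the dilation $\mu\mapsto\mu(p+\epsilon\,\cdot)$ really is a Banach-space isomorphism of the relevant Schauder spaces (so that \eqref{eq:bij} is well posed and $\mathcal{B}$ may legitimately be applied to the rescaled trace of the single layer potential), and that the normal derivative appearing in \eqref{jfs} from the exterior side is exactly the one occurring both in the boundary condition of \eqref{bvpe} and in the conclusion of Corollary~\ref{cor:repneu}.
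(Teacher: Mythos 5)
Your proof is correct and follows essentially the same route as the paper's: both directions rest on the representation/uniqueness result of Corollary~\ref{cor:repneu} applied to $\Omega_{p,\epsilon}$ and on the jump formula~\eqref{jfs} and continuity of the single layer from Proposition~\ref{prop:slp}, with the dilation $x=p+\epsilon t$ merely transporting the statement back to the fixed set $\partial\Omega$. Your write-up is more detailed (separating well-definedness, injectivity, and surjectivity) than the paper's two-sentence argument, but there is no genuine difference in strategy.
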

\begin{proof}
We first note that if $u\in C^{1,\alpha}(\overline{{\mathbb{S}}_q^-[\Omega_{p,\epsilon}]})$ is a solution of problem \eqref{bvpe}, then by Corollary \ref{cor:repneu} there exists a unique $\theta \in C^{0,\alpha}(\partial \Omega)$ such that
 \[
u(x)= {\mathcal{S}^{k,-}_{q,\eta}}[\partial\Omega_{p,\epsilon},\theta((\cdot-p)/\epsilon)](x) \qquad \forall x \in \overline{{\mathbb{S}}_q^-[\Omega_{p,\epsilon}]}\, .
 \]
 Then by Proposition \ref{prop:slp} one immediately verifies that $\theta$ must solve equation \eqref{eq:bij}. Conversely, if  $\theta \in C^{0,\alpha}(\partial \Omega)$  solves equation \eqref{eq:bij}, by Proposition \ref{prop:slp} we deduce that ${\mathcal{S}^{k,-}_{q,\eta}}[\partial\Omega_{p,\epsilon},\theta((\cdot-p)/\epsilon)]$ is in $C^{1,\alpha}(\overline{{\mathbb{S}}_q^-[\Omega_{p,\epsilon}]})$ and solves  problem \eqref{bvpe}.
\end{proof}

Our next goal is to analyze the solutions of equation \eqref{eq:bij}. More specifically, we are interested in understanding their dependence on $\epsilon$. We now set 
\[
\mathcal{V}[\partial\Omega,\theta](t):= \int_{\partial \Omega} S_n(t-s)\theta(s)\,d\sigma_s \qquad \forall t \in \partial \Omega\, ,
\]
and
\[
\mathcal{K}^*[\partial\Omega,\theta](t):= \int_{\partial \Omega}\nu_{\Omega}(t)\cdot \nabla S_n(t-s)\theta(s)\,d\sigma_s \qquad \forall t \in \partial \Omega\, ,
\]
for all $\theta \in C^{0,\alpha}(\partial \Omega)$. By classical potential theory (see \cite[Thms.~4.25 and 6.7]{DaLaMu21}), we know that $\mathcal{V}[\partial\Omega,\cdot]$ is a bounded linear operator from $C^{0,\alpha}(\partial \Omega)$ to $C^{1,\alpha}(\partial \Omega)$ and that $\mathcal{K}^*[\partial\Omega,\cdot]$ is a {compact} linear operator from $C^{0,\alpha}(\partial \Omega)$ to $C^{0,\alpha}(\partial \Omega)$. Then we observe that if $\epsilon \in \mathopen]0,\epsilon^\#_0[$ {by Corollaries \ref{hlm:smp:cor:bd1} and \ref{hlm:smp:cor:bd2}} equation \eqref{eq:bij} can be rewritten as
\begin{equation}\label{eq:bijbis}
\begin{split}
&\frac{1}{2}\theta+\mathcal{K}^*[\partial\Omega,\theta]+ \epsilon \tilde{N}_1[\epsilon](\theta)+\epsilon^{n-1} N_2[\epsilon](\theta)\\
&\quad+\epsilon^{n-1}(\log \epsilon) k^{n-2} N_3[\epsilon](\theta) -\mathcal{B}\Bigg[\epsilon \mathcal{V}[\partial\Omega,\theta] +\epsilon^2  \tilde{M}_1[\epsilon](\theta)+\epsilon^{n-1}M_2[\epsilon](\theta)\\
&\quad +\epsilon^{n-1}(\log \epsilon) k^{n-2} M_3[\epsilon](\theta)\Bigg]=0 \qquad \mathrm{on}\ \partial \Omega\, .
\end{split}
\end{equation}
We now note that equation \eqref{eq:bijbis} can be seen as an equation which depends on the quantities $\epsilon$ and  $\epsilon (\log \epsilon)$. Therefore, we replace the quantity $\epsilon (\log \epsilon)$ by an auxiliary variable $r$ and, motivated by \eqref{eq:bijbis}, we introduce the map $\Lambda$ from $$\mathopen]-\epsilon^\#_0,\epsilon^\#_0\mathclose[\times \mathbb{R} \times C^{0,\alpha}(\partial \Omega)$$ to $C^{0,\alpha}(\partial \Omega)$ defined by
\[
\begin{split}
\Lambda[\epsilon,r,\theta]:=&\frac{1}{2}\theta+\mathcal{K}^*[\partial\Omega,\theta]+ \epsilon \tilde{N}_1[\epsilon](\theta)+\epsilon^{n-1} N_2[\epsilon](\theta)\\
&\quad+\epsilon^{n-2}r k^{n-2} N_3[\epsilon](\theta) -\mathcal{B}\Bigg[\epsilon  \mathcal{V}[\partial\Omega,\theta] +\epsilon^2 \tilde{M}_1[\epsilon](\theta)+\epsilon^{n-1}M_2[\epsilon](\theta)\\
&\quad + \epsilon^{n-2} rk^{n-2} M_3[\epsilon](\theta)\Bigg] \qquad \mathrm{on}\ \partial \Omega\, ,
\end{split}
\]
for all $(\epsilon,r,\theta)\in \mathopen]-\epsilon^\#_0,\epsilon^\#_0\mathclose[\times \mathbb{R} \times C^{0,\alpha}(\partial \Omega)$. Next we recall that if $n$ is odd, then ${T^k_n}(x)=0$ for all $x \in \mathbb{R}^n$, and thus $N_3$ and $M_3$ are identically equal to $0$. Therefore, we find convenient to introduce the function $n \mapsto \tau_n$ from $\mathbb{N}$ to itself defined by
\[
\tau_n :=
\left\{
\begin{array}{ll}
1  &   \text{if $n$ is even}\, ,\\
0  &   \text{if $n$ is odd}\, .
\end{array}
\right.
\]
Then if $\epsilon \in \mathopen]0,\epsilon^\#_0[$ equation \eqref{eq:bijbis} can be written as
\begin{equation}\label{eq:Lambdaeps}
\Lambda[\epsilon,\tau_n \epsilon (\log \epsilon),\theta]=0\, .
\end{equation}
Our aim is to understand the behavior of the solutions $\theta$ of equation \eqref{eq:Lambdaeps} as $\epsilon$ approaches the degenerate value $\epsilon=0$. Therefore we note that, if we further assume that $\mathcal{B}$ is continuous  from $C^{0,\alpha}(\partial\Omega)$ to $C^{0,\alpha}(\partial\Omega)$, letting $\epsilon \to 0^+$ in equation \eqref{eq:Lambdaeps}, we obtain
\begin{equation}\label{eq:lim}
\Lambda[0,0,\theta]=0\, ,
\end{equation}
or equivalently
\[
\begin{split}
&\frac{1}{2}\theta+\mathcal{K}^*[\partial\Omega,\theta] = {\mathcal{B}}[0] \qquad \mathrm{on}\ \partial \Omega\, .
\end{split}
\]
By \cite[Cor.~6.15]{DaLaMu21}, equation \eqref{eq:lim} in the unknown $\theta$ has a unique solution in $C^{0,\alpha}(\partial \Omega)$, which we denote by $\tilde{\theta}$.

In view of the bijection result of Proposition \ref{prop:bij}, a crucial step to understand the solvability of problem \eqref{bvpe} is to understand the solvability of the integral equation \eqref{eq:Lambdaeps} and the behavior of the solutions. This is done by the Implicit Function Theorem in the proposition  below. Before stating it, we observe that we add the assumption that ${\mathcal{B}}$ is a real analytic map from $C^{0,\alpha}(\partial\Omega)$ to $C^{0,\alpha}(\partial\Omega)$. This is done in order to deduce the real analyticity of the implicitly defined function.

\begin{prop}\label{prop:Theta}
Let $\alpha \in \mathopen]0,1[$, $q \in \mathbb{D}^+_n(\mathbb{R})$, $\eta \in \mathbb{R}^n$. Let {$k \in \mathbb{C}$ be such} that $k^2 \notin \sigma_{q,\eta}(-\Delta)$. Let $\Omega$ be as in assumption \eqref{Omega_def}. Let $p \in Q$. Let $\epsilon^\#_0$ be as in assumptions \eqref{assepsbis} and \eqref{assepster}.  Let ${\mathcal{B}}$ be a real analytic map from $C^{0,\alpha}(\partial\Omega)$ to $C^{0,\alpha}(\partial\Omega)$. Let $\tilde{\theta}$ be the unique solution in $C^{0,\alpha}(\partial \Omega)$ of equation \eqref{eq:lim}. Then there exist $\epsilon' \in \mathopen]0,\epsilon^\#_0[$, $r' \in \mathopen]0,+\infty[$, an open neighborhood $\mathcal{O}_{\tilde{\theta}}$ of $\tilde{\theta}$ in $C^{0,\alpha}(\partial \Omega)$, and a real analytic  map $\Theta$ from $\mathopen]-\epsilon',\epsilon'\mathclose[\times \mathopen]-r',r'[$ to $\mathcal{O}_{\tilde{\theta}}$ such that
\[
\epsilon (\log \epsilon)\in \mathopen]-r',r'[ \qquad \forall \epsilon \in \mathopen]0,\epsilon'[\, ,
\]
and such that the set of zeros of $\Lambda$ in $\mathopen]-\epsilon',\epsilon'\mathclose[\times  \mathopen]-r',r'\mathclose[\times \mathcal{O}_{\tilde{\theta}}$ coincides with the graph of $\Theta$. In particular,  $\Theta[0,0]=\tilde{\theta}$.
\end{prop}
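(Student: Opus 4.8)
The plan is to apply the Implicit Function Theorem for real analytic maps between Banach spaces (see, e.g., \cite{DaLaMu21}) to the map $\Lambda$ at the point $(0,0,\tilde\theta)$. Three facts are needed: (a) $\Lambda$ is real analytic on $\mathopen]-\epsilon^\#_0,\epsilon^\#_0\mathclose[\times\mathbb{R}\times C^{0,\alpha}(\partial\Omega)$; (b) $\Lambda[0,0,\tilde\theta]=0$; (c) the partial differential $d_\theta\Lambda[0,0,\tilde\theta]$ is a linear homeomorphism of $C^{0,\alpha}(\partial\Omega)$ onto itself. Granting these, the theorem produces $\epsilon'\in\mathopen]0,\epsilon^\#_0\mathclose[$, $r'\in\mathopen]0,+\infty\mathclose[$, an open neighborhood $\mathcal{O}_{\tilde\theta}$ of $\tilde\theta$ in $C^{0,\alpha}(\partial\Omega)$, and a real analytic map $\Theta$ from $\mathopen]-\epsilon',\epsilon'\mathclose[\times\mathopen]-r',r'\mathclose[$ into $\mathcal{O}_{\tilde\theta}$ whose graph coincides with the zero set of $\Lambda$ in $\mathopen]-\epsilon',\epsilon'\mathclose[\times\mathopen]-r',r'\mathclose[\times\mathcal{O}_{\tilde\theta}$, and in particular $\Theta[0,0]=\tilde\theta$. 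Finally, since $\lim_{\epsilon\to0^+}\epsilon\log\epsilon=0$, after possibly further shrinking $\epsilon'$ we obtain $\epsilon\log\epsilon\in\mathopen]-r',r'\mathclose[$ for all $\epsilon\in\mathopen]0,\epsilon'\mathclose[$, which is the last required assertion.

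For (a): the operators $\tilde N_1[\epsilon]$, $N_2[\epsilon]$, $N_3[\epsilon]$ and $\mathcal{V}[\partial\Omega,\cdot]$, $\tilde M_1[\epsilon]$, $M_2[\epsilon]$, $M_3[\epsilon]$ depend real analytically on $\epsilon$ by Propositions \ref{hlm:smp:prop:bd1} and \ref{hlm:smp:prop:bd2} and Corollaries \ref{hlm:smp:cor:bd1} and \ref{hlm:smp:cor:bd2}, and they act linearly and continuously on $\theta$; multiplying by the polynomial coefficients $\epsilon$, $\epsilon^{n-1}$, $\epsilon^{n-2}r$ preserves real analyticity, so the part of $\Lambda$ not involving $\mathcal{B}$ is real analytic on $\mathopen]-\epsilon^\#_0,\epsilon^\#_0\mathclose[\times\mathbb{R}\times C^{0,\alpha}(\partial\Omega)$. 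The remaining summand is the composition of the real analytic map $\mathcal{B}\colon C^{0,\alpha}(\partial\Omega)\to C^{0,\alpha}(\partial\Omega)$ with the real analytic map $(\epsilon,r,\theta)\mapsto\epsilon\,\mathcal{V}[\partial\Omega,\theta]+\epsilon^2\tilde M_1[\epsilon](\theta)+\epsilon^{n-1}M_2[\epsilon](\theta)+\epsilon^{n-2}rk^{n-2}M_3[\epsilon](\theta)$, which takes values in $C^{1,\alpha}(\partial\Omega)\hookrightarrow C^{0,\alpha}(\partial\Omega)$; hence it is real analytic as well. Fact (b) is exactly the definition of $\tilde\theta$ as the (unique) solution in $C^{0,\alpha}(\partial\Omega)$ of equation \eqref{eq:lim}.

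The heart of the matter is (c). Every $\epsilon$- or $r$-dependent summand of $\Lambda$ carries a factor among $\epsilon$, $\epsilon^{n-1}$, $\epsilon^{n-2}r$, so its $\theta$-differential vanishes at $(\epsilon,r)=(0,0)$; moreover the argument of $\mathcal{B}$ vanishes there, and the chain rule contribution of the $\mathcal{B}$-term is $-d\mathcal{B}[0]$ composed with the $\theta$-differential of that argument, which again vanishes at $(0,0)$. Hence
\[
d_\theta\Lambda[0,0,\tilde\theta][\psi]=\frac{1}{2}\psi+\mathcal{K}^*[\partial\Omega,\psi]\qquad\forall\psi\in C^{0,\alpha}(\partial\Omega)\,.
\]
Since $\mathcal{K}^*[\partial\Omega,\cdot]$ is compact on $C^{0,\alpha}(\partial\Omega)$, the operator $\frac{1}{2}\mathbb{I}+\mathcal{K}^*[\partial\Omega,\cdot]$ is Fredholm of index $0$; by \cite[Cor.~6.15]{DaLaMu21}, the same result ensuring existence and uniqueness of $\tilde\theta$, it is injective, hence bijective, hence a linear homeomorphism by the Open Mapping Theorem. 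This establishes (c) and concludes the proof. I do not expect a genuine obstacle here: the work is entirely the real analyticity bookkeeping together with the routine differentiation isolating $\frac{1}{2}\mathbb{I}+\mathcal{K}^*[\partial\Omega,\cdot]$; the one point deserving care is to check that each power of $\epsilon$ (or the value $r=0$, in the terms with coefficient $\epsilon^{n-2}r$, which for $n=2$ reduces to $r$) really does annihilate the corresponding $\theta$-derivative at $(0,0)$.
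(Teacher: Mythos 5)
Your proposal is correct and takes essentially the same route as the paper's proof: apply the Implicit Function Theorem for real analytic maps in Banach spaces at $(0,0,\tilde\theta)$, verify real analyticity of $\Lambda$ via Corollaries \ref{hlm:smp:cor:bd1} and \ref{hlm:smp:cor:bd2} and composition with the real analytic $\mathcal{B}$, and identify $d_\theta\Lambda[0,0,\tilde\theta]=\tfrac{1}{2}\mathbb{I}+\mathcal{K}^*[\partial\Omega,\cdot]$, whose invertibility on $C^{0,\alpha}(\partial\Omega)$ is guaranteed by \cite[Cor.~6.15]{DaLaMu21}. The only difference is that you spell out the chain-rule bookkeeping (including the $n=2$ case with coefficient $\epsilon^{n-2}r=r$) in more detail than the paper, which simply cites standard calculus in Banach spaces; the content matches exactly.
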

\begin{proof}
We plan to apply  the Implicit Function Theorem for real analytic maps in Banach spaces. By Corollaries \ref{hlm:smp:cor:bd1} and \ref{hlm:smp:cor:bd2}, by classical potential theory (see \cite[Thms. 4.25 and 6.7]{DaLaMu21}), and by standard calculus in Banach spaces, we deduce that $\Lambda$ is a real analytic map from $\mathopen]-\epsilon^\#_0,\epsilon^\#_0\mathclose[\times \mathbb{R} \times C^{0,\alpha}(\partial \Omega)$ to $C^{0,\alpha}(\partial \Omega)$. By standard calculus in Banach spaces, we have that the partial differential $d_\theta \Lambda [0,0,\tilde{\theta}]$ of $\Lambda$ at $(0,0,\tilde{\theta})$ with respect to $\theta$ is delivered by the linear map
\[
d_\theta \Lambda [0,0,\tilde{\theta}]=\frac{1}{2}\mathbb{I}+\mathcal{K}^*[\partial\Omega,\cdot]\, .
\]
As a consequence, by \cite[Cor.~6.15]{DaLaMu21}, $d_\theta \Lambda [0,0,\tilde{\theta}]$ is a linear homeomorphism from $C^{0,\alpha}(\partial \Omega)$ to itself. Then by the Implicit Function Theorem for real analytic maps in Banach spaces (cf.~Deimling \cite[Thm.~15.3]{De85}), we deduce the existence of  $\epsilon' \in \mathopen]0,\epsilon^\#_0[$, $r' \in \mathopen]0,+\infty[$, an open neighborhood $\mathcal{O}_{\tilde{\theta}}$ of $\tilde{\theta}$ in $C^{0,\alpha}(\partial \Omega)$, and a real analytic   $\Theta$ from $\mathopen]-\epsilon',\epsilon'\mathclose[\times \mathopen]-r',r'[$ to $\mathcal{O}_{\tilde{\theta}}$ such that
\[
\epsilon (\log \epsilon)\in \mathopen]-r',r'[ \qquad \forall \epsilon \in \mathopen]0,\epsilon'[\, ,
\]
{the} set of zeros of $\Lambda$ in $\mathopen]-\epsilon',\epsilon'\mathclose[\times  \mathopen]-r',r'\mathclose[\times \mathcal{O}_{\tilde{\theta}}$ coincides with the graph of $\Theta$, {and}   $\Theta[0,0]=\tilde{\theta}$.
\end{proof}

\begin{rem}\label{rem:an}
An example of  a real analytic map ${\mathcal{B}}$  from $C^{0,\alpha}(\partial\Omega)$ to $C^{0,\alpha}(\partial\Omega)$ can be obtained by considering composition operators of the type
\[
u\mapsto {\mathcal{B}}[u](\cdot):=G(u(\cdot))\, ,
\]
where $G$ is a real analytic function from $\mathbb{C}$ to itself (see, e.g., Valent~\cite[Thm.~5.2, p.~44]{Va88}). 
\end{rem}

By means of the densities $\Theta[\epsilon,\tau_n \epsilon (\log \epsilon)]$ for $\epsilon \in \mathopen]0,\epsilon'[$, we can finally introduce in the corollary below the family of solutions $\{u(\epsilon,\cdot)\}_{\epsilon \in \mathopen]0,\epsilon'[}$ to problem \eqref{bvpe}.

\begin{cor}
Let the assumptions of Proposition \ref{prop:Theta} hold.  {We} set
\[
u(\epsilon,x):=u_\epsilon \big[\Theta[\epsilon,\tau_n \epsilon (\log \epsilon)]\big](x)\qquad \forall x \in \overline{{\mathbb{S}}_q^-[\Omega_{p,\epsilon}]}\, ,
\]
for all $\epsilon \in \mathopen]0,\epsilon'[$. Then for each $\epsilon \in \mathopen]0,\epsilon'[$ the function $u(\epsilon,\cdot)$ is a solution of problem \eqref{bvpe}.
\end{cor}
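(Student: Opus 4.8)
The plan is to assemble the statement from the two results already in hand: the bijection of Proposition~\ref{prop:bij} between the solutions of the integral equation \eqref{eq:bij} and the solutions of the nonlinear Robin problem \eqref{bvpe}, and the implicit function description of Proposition~\ref{prop:Theta}. The only point that needs checking is that the density $\Theta[\epsilon,\tau_n\epsilon(\log\epsilon)]$ used to define $u(\epsilon,\cdot)$ is precisely a solution of equation \eqref{eq:bij}; once this is established, Proposition~\ref{prop:bij} gives the conclusion immediately.

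First I would fix $\epsilon\in\mathopen]0,\epsilon'\mathclose[$ and check that the pair $(\epsilon,\tau_n\epsilon(\log\epsilon))$ lies in the domain $\mathopen]-\epsilon',\epsilon'\mathclose[\times\mathopen]-r',r'\mathclose[$ on which $\Theta$ is defined: the first coordinate clearly belongs to $\mathopen]-\epsilon',\epsilon'\mathclose[$, and for the second one either $n$ is odd, in which case $\tau_n=0$ and $\tau_n\epsilon(\log\epsilon)=0\in\mathopen]-r',r'\mathclose[$, or $n$ is even, in which case $\tau_n=1$ and $\tau_n\epsilon(\log\epsilon)=\epsilon(\log\epsilon)\in\mathopen]-r',r'\mathclose[$ by the property of $r'$ recorded in Proposition~\ref{prop:Theta}. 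Hence $\theta:=\Theta[\epsilon,\tau_n\epsilon(\log\epsilon)]$ is well defined and, being a point of the graph of $\Theta$, it is a zero of $\Lambda$, that is, it satisfies \eqref{eq:Lambdaeps}. Now, by the very definition of $\Lambda$, substituting $r=\tau_n\epsilon(\log\epsilon)$ turns the terms $\epsilon^{n-2}rk^{n-2}N_3[\epsilon](\theta)$ and $\epsilon^{n-2}rk^{n-2}M_3[\epsilon](\theta)$ into $\epsilon^{n-1}(\log\epsilon)k^{n-2}N_3[\epsilon](\theta)$ and $\epsilon^{n-1}(\log\epsilon)k^{n-2}M_3[\epsilon](\theta)$ when $n$ is even, while for $n$ odd these terms vanish identically, since then $T^k_n\equiv 0$ and hence $N_3\equiv M_3\equiv 0$; in both cases \eqref{eq:Lambdaeps} coincides with \eqref{eq:bijbis}. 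In turn, Corollaries~\ref{hlm:smp:cor:bd1} and \ref{hlm:smp:cor:bd2} identify \eqref{eq:bijbis} with the rescaled integral equation \eqref{eq:bij}. Therefore $\theta$ solves \eqref{eq:bij}.

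Finally, since $\epsilon'<\epsilon^\#_0$, Proposition~\ref{prop:bij} applies to this value of $\epsilon$ and shows that $u_\epsilon[\theta]\in C^{1,\alpha}(\overline{\mathbb{S}_q^-[\Omega_{p,\epsilon}]})$ is a solution of problem \eqref{bvpe}. Since by definition $u(\epsilon,\cdot)=u_\epsilon\big[\Theta[\epsilon,\tau_n\epsilon(\log\epsilon)]\big]=u_\epsilon[\theta]$, this proves the claim for every $\epsilon\in\mathopen]0,\epsilon'\mathclose[$. I do not expect any genuine obstacle here: the argument is essentially bookkeeping, and the only point deserving a little care is keeping track of the chain of identifications \eqref{eq:bij}$\;\Leftrightarrow\;$\eqref{eq:bijbis}$\;\Leftrightarrow\;$\eqref{eq:Lambdaeps} and verifying that the substitution $r=\tau_n\epsilon(\log\epsilon)$ does not take us outside the region in which $\Theta$ was constructed.
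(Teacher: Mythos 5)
Your proof is correct, and it follows exactly the path the paper intends (the paper omits a proof of this corollary, treating it as an immediate consequence of Propositions~\ref{prop:bij} and~\ref{prop:Theta}). You supply precisely the missing bookkeeping: checking that $(\epsilon,\tau_n\epsilon\log\epsilon)$ lies in the domain of $\Theta$, observing that a point of the graph of $\Theta$ is a zero of $\Lambda$ so that \eqref{eq:Lambdaeps} holds, tracing the chain of identifications \eqref{eq:Lambdaeps}~$\Leftrightarrow$~\eqref{eq:bijbis}~$\Leftrightarrow$~\eqref{eq:bij}, and then invoking the bijection of Proposition~\ref{prop:bij}.
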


\subsection{A representation formula for the family of solutions $\{u(\epsilon,\cdot)\}_{\epsilon \in ]0,\epsilon'[}$}

We are now ready to prove the main theorem of the section. More precisely, we represent the restrictions to a fixed  subset $V$ such that $\overline{V} \cap (p+q\mathbb{Z}^n)=\emptyset$ of the solutions $\{u(\epsilon,\cdot)\}_{\epsilon \in \mathopen]0,\epsilon'[}$ in terms of a real analytic operator of two variables with values in a function space evaluated at the pair $(\epsilon,\tau_n \epsilon (\log \epsilon))$. This implies the possibility to represent $u(\epsilon,\cdot)$ as a converging power series of the pair $(\epsilon,\tau_n \epsilon (\log \epsilon))$. Similar results can be also obtained for the restrictions of $\{u(\epsilon,\cdot)\}_{\epsilon \in \mathopen]0,\epsilon'[}$ to sets of the type $p+\epsilon \tilde{V}$ or for suitable functionals of $u(\epsilon,\cdot)$, like for example its energy integral. However, for the sake of brevity, here we consider only the previously described result on the restriction of the solution to a fixed set far from the holes.

\begin{thm}\label{thm:rep}
Let $\alpha \in \mathopen]0,1[$, $q \in \mathbb{D}^+_n(\mathbb{R})$, $\eta \in \mathbb{R}^n$. Let {$k \in \mathbb{C}$ be such} that $k^2 \notin \sigma_{q,\eta}(-\Delta)$. Let $\Omega$ be as in assumption \eqref{Omega_def}. Let $p \in Q$. Let $\epsilon^\#_0$ be as in assumptions \eqref{assepsbis} and \eqref{assepster}.   Let ${\mathcal{B}}$ be a real analytic map from $C^{0,\alpha}(\partial\Omega)$ to $C^{0,\alpha}(\partial\Omega)$.  Let $\tilde{\theta}$ be the unique solution in $C^{0,\alpha}(\partial \Omega)$ of equation \eqref{eq:lim}. Let $\epsilon' \in \mathopen]0,\epsilon^\#_0[$, $r' \in \mathopen]0,+\infty[$ be as in Proposition \ref{prop:Theta}. Let $V$ be a bounded open subset of $\mathbb{R}^n$ such that $\overline{V} \cap (p+q\mathbb{Z}^n)=\emptyset$. Then there exist $\epsilon''  \in \mathopen]0,\epsilon']$,  such that 
\begin{equation}\label{eq:rep1}
\overline{V} \subseteq \mathbb{S}_q^-[\Omega_{p,\epsilon}] \qquad \forall \epsilon \in \mathopen]-\epsilon'',\epsilon''[
\end{equation}
and a real analytic map $U$ from  $\mathopen]-\epsilon'',\epsilon''\mathclose[\times  \mathopen]-r',r'[$ to $C^2(\overline{V})$ such that
\begin{equation}\label{eq:rep2}
u(\epsilon,x)=\epsilon^{n-1}U[\epsilon,\tau_n \epsilon (\log \epsilon)](x) \qquad \forall x \in \overline{V}\, 
\end{equation}
for all $\epsilon \in \mathopen]0,\epsilon''[$ and that
\begin{equation}\label{eq:rep3}
U[0,0]= {G^{k}_{q,\eta}}(x-p) \int_{\partial \Omega}\tilde{\theta}\,d\sigma \qquad \forall x \in \overline{V}\, .
\end{equation}
\end{thm}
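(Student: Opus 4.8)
The plan is to combine the integral representation of the solutions provided by Proposition~\ref{prop:bij} (together with the Implicit Function Theorem argument of Proposition~\ref{prop:Theta}) with the analysis of the quasi-periodic single layer potential far from the holes carried out in Proposition~\ref{hlm:smp:prop:rep}, and then to recognize the resulting object as a composition of real analytic maps between open subsets of Banach spaces.

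First I would fix $\epsilon''$. Since $\overline{V}\cap(p+q\mathbb{Z}^n)=\emptyset$, Proposition~\ref{hlm:smp:prop:rep} yields $\epsilon_V\in\mathopen]0,\epsilon_0\mathclose[$ for which \eqref{hlm:smp:eq:rep0} holds and a real analytic map $M$ from $\mathopen]-\epsilon_V,\epsilon_V\mathclose[$ to $\mathcal{L}(C^{0,\alpha}(\partial\Omega),C^2(\overline{V}))$. I would then set $\epsilon'':=\min\{\epsilon',\epsilon_V\}\in\mathopen]0,\epsilon'\mathclose]$, so that \eqref{eq:rep1} holds, the map $\Theta$ of Proposition~\ref{prop:Theta} is defined on $\mathopen]-\epsilon'',\epsilon''\mathclose[\times\mathopen]-r',r'\mathclose[$, and $\epsilon(\log\epsilon)\in\mathopen]-r',r'\mathclose[$ for all $\epsilon\in\mathopen]0,\epsilon''\mathclose[$ (this last property already being part of Proposition~\ref{prop:Theta}).

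Next I would apply \eqref{hlm:smp:eq:rep3} of Proposition~\ref{hlm:smp:prop:rep} with the density $\theta=\Theta[\epsilon,\tau_n\epsilon(\log\epsilon)]$, obtaining, for $\epsilon\in\mathopen]0,\epsilon''\mathclose[$ and $x\in\overline{V}$,
\[
u(\epsilon,x)=u_\epsilon\big[\Theta[\epsilon,\tau_n\epsilon(\log\epsilon)]\big](x)=\epsilon^{n-1}M[\epsilon]\big(\Theta[\epsilon,\tau_n\epsilon(\log\epsilon)]\big)(x)\, .
\]
This makes it natural to define
\[
U[\epsilon,r](x):=M[\epsilon]\big(\Theta[\epsilon,r]\big)(x)=\int_{\partial\Omega}G^{k}_{q,\eta}(x-p-\epsilon s)\,\Theta[\epsilon,r](s)\,d\sigma_s\qquad\forall x\in\overline{V}\, ,
\]
for $(\epsilon,r)\in\mathopen]-\epsilon'',\epsilon''\mathclose[\times\mathopen]-r',r'\mathclose[$, so that \eqref{eq:rep2} holds by construction. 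To see that $U$ is real analytic from $\mathopen]-\epsilon'',\epsilon''\mathclose[\times\mathopen]-r',r'\mathclose[$ to $C^2(\overline{V})$, I would write it as the composition of the real analytic map $(\epsilon,r)\mapsto(\epsilon,\Theta[\epsilon,r])$ from $\mathopen]-\epsilon'',\epsilon''\mathclose[\times\mathopen]-r',r'\mathclose[$ to $\mathopen]-\epsilon_V,\epsilon_V\mathclose[\times C^{0,\alpha}(\partial\Omega)$ (real analytic by Proposition~\ref{prop:Theta}) with the map $(\epsilon,\theta)\mapsto M[\epsilon](\theta)$ from $\mathopen]-\epsilon_V,\epsilon_V\mathclose[\times C^{0,\alpha}(\partial\Omega)$ to $C^2(\overline{V})$; the latter is real analytic because $M$ is real analytic with values in $\mathcal{L}(C^{0,\alpha}(\partial\Omega),C^2(\overline{V}))$ and the evaluation pairing $\mathcal{L}(C^{0,\alpha}(\partial\Omega),C^2(\overline{V}))\times C^{0,\alpha}(\partial\Omega)\to C^2(\overline{V})$ is bilinear and continuous. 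Finally, \eqref{eq:rep3} follows by setting $\epsilon=0$ and $r=0$: since $\Theta[0,0]=\tilde\theta$, one gets $U[0,0](x)=\int_{\partial\Omega}G^{k}_{q,\eta}(x-p)\,\tilde\theta(s)\,d\sigma_s=G^{k}_{q,\eta}(x-p)\int_{\partial\Omega}\tilde\theta\,d\sigma$ for all $x\in\overline{V}$.

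I expect the only genuinely delicate point to be the bookkeeping: one must choose $\epsilon''$ compatibly with the geometric constraint \eqref{eq:rep1} on $V$, with the domain of definition of $\Theta$, and with the range of $\epsilon\mapsto\epsilon(\log\epsilon)$, and then keep track of the domains so that the composition defining $U$ makes sense. The real analyticity itself is a routine application of the stability of real analyticity under composition, once Propositions~\ref{hlm:smp:prop:rep} and~\ref{prop:Theta} are available.
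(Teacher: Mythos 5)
Your proposal is correct and follows essentially the same route as the paper: choose $\epsilon''$ small enough for the geometric constraint, represent $u(\epsilon,\cdot)$ on $\overline V$ via the rescaled single layer with density $\Theta[\epsilon,\tau_n\epsilon\log\epsilon]$, define $U[\epsilon,r]$ by the same integral, and conclude analyticity and the limit value $\Theta[0,0]=\tilde\theta$. The only cosmetic difference is that you factor the analyticity argument through Proposition~\ref{hlm:smp:prop:rep} and a bilinear-evaluation composition, whereas the paper performs the change of variables directly and cites the \cite{LaMu13} regularity results for $U$ in one step; the content is the same.
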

\begin{proof}
Choosing $\epsilon''$ small enough, we can clearly assume that \eqref{eq:rep1} holds. Then we have
\[
\overline{V} -(p+\epsilon \partial \Omega)\subseteq \mathbb{R}^n \setminus q\mathbb{Z}^n \qquad \forall \epsilon \in \mathopen]-\epsilon'',\epsilon''[.
\]
By the theorem of change of variable in integrals, we have
\[
u(\epsilon,x)= \epsilon^{n-1}\int_{\partial \Omega}{G^{k}_{q,\eta}}(x-p-\epsilon s)\Theta[\epsilon,\tau_n \epsilon (\log \epsilon)](s)\,d\sigma_s  \qquad \forall x \in \overline{V}\ ,
\]
for all $\epsilon \in \mathopen]0,\epsilon''[$. Then we find natural to set
\[
U[\epsilon,r](x)=\int_{\partial \Omega}{G^{k}_{q,\eta}}(x-p-\epsilon s)\Theta[\epsilon,r](s)\,d\sigma_s  \qquad \forall x \in \overline{V}
\]
for all $(\epsilon,r)\in \mathopen]-\epsilon'',\epsilon''\mathclose[\times  \mathopen]-r',r'[$. By the regularity results for the integral operators with real analytic kernel of \cite{LaMu13}, we deduce that  $U$ is a real analytic map from  $\mathopen]-\epsilon'',\epsilon''\mathclose[\times  \mathopen]-r',r'[$ to $C^2(\overline{V})$. By the definition of $U$, we clearly have that equality \eqref{eq:rep2} holds for all $\epsilon \in \mathopen]0,\epsilon''[$.  Since $\Theta[0,0]=\tilde{\theta}$, we also deduce the validity of \eqref{eq:rep3}.
\end{proof}


\section*{Acknowledgements}
 
 The authors are members of the ``Gruppo Nazionale per l'Analisi Matematica, la Probabilit\`a e le loro Applicazioni'' (GNAMPA) of the ``Istituto Nazionale di Alta Matematica'' (INdAM). R.B.~is supported by the FWO Odysseus 1 grant G.0H94.18N: Analysis and Partial Differential Equations and by the Methusalem programme of the Ghent University Special Research Fund (BOF) (Grant number 01M01021). P.M.~acknowledges the support from EU through the H2020-MSCA-RISE-2020 project EffectFact, 
Grant agreement ID: 101008140.


\begin{thebibliography}{999}


\bibitem{AkLa22}
T.~Akyel and M.~Lanza de Cristoforis,  Asymptotic behavior of the solutions of a transmission problem for the Helmholtz equation: a functional analytic approach. Math. Methods Appl. Sci. 45 (2022), no. 9, 5360–5387. 

\bibitem{AkLa22bis}
T.~Akyel and M.~Lanza de Cristoforis,  Microscopic behavior of the solutions of a transmission problem for the Helmholtz equation. A functional analytic approach. Stud. Univ. Babe\c{s}-Bolyai Math. 67 (2022), no. 2, 383–402.

\bibitem{AmFiGoLeZh17}
H.~Ammari, B.~Fitzpatrick, D.~Gontier, {and} H.~Lee, 
A mathematical and numerical framework for bubble meta-screens. 
SIAM J. Appl. Math. 77 (2017), no. 5, 1827–1850. 

\bibitem{AmFiLeYu17}
H.~Ammari, B.~Fitzpatrick, H.~Lee, and S.~Yu,  
Subwavelength phononic bandgap opening in bubbly media. J. Differential Equations 263 (2017), no. 9, 5610–5629. 



\bibitem{AmHiYu22}
H.~Ammari, E.O.~Hiltunen, and S.~Yu, 
Subwavelength guided modes for acoustic waves in bubbly crystals with a line defect. 
J. Eur. Math. Soc. (JEMS) 24 (2022), no. 7, 2279–2313.

\bibitem{AmKa07}
H.~Ammari and H.~Kang, Polarization and moment tensors. With applications to inverse problems and effective medium theory. Applied Mathematical Sciences, 162. Springer, New York, 2007. 

\bibitem{AmKaLe09}
H.~Ammari, H.~Kang, and H.~Lee, Layer potential techniques in spectral analysis. Mathematical Surveys and Monographs, 153. American Mathematical Society, Providence, RI, 2009. 

\bibitem{AmKaSoZr06}
H.~Ammari, H.~Kang, S.~Sofiane, and H.~Zribi,  Layer potential techniques in spectral analysis. II. Sensitivity analysis of spectral properties of high contrast band-gap materials. Multiscale Model. Simul. 5 (2006), no. 2, 646–663.

\bibitem{AyJePi20}
R.~Aylwin, C.~Jerez-Hanckes, and J.~Pinto, 
On the properties of quasi-periodic boundary integral operators for the Helmholtz equation. 
Integral Equations Operator Theory 92 (2020), no. 2, Paper No. 17, 41 pp. 


\bibitem{BoEtAl18}
V.~Bonnaillie-No\"{e}l, M.~Dalla~Riva, M.~Dambrine, and P.~Musolino. A Dirichlet problem for the Laplace operator in a domain with a small hole close to the boundary. J. Math. Pures Appl. (9) 116 (2018), 211–267.
\bibitem{BoEtAl20}
V.~Bonnaillie-No\"{e}l, M.~Dalla~Riva, M.~Dambrine, and P.~Musolino. Global representation and multiscale expansion for the Dirichlet problem in a domain with a small hole close to the boundary. Comm. Partial Differential Equations 46 (2021), no. 2, 282–309.

\bibitem{BoDaLa16}
V.~Bonnaillie-No\"{e}l, M.~Dambrine, and C.~Lacave, Interactions between moderately close inclusions for the two-dimensional Dirichlet-Laplacian.
Appl. Math. Res. Express. AMRX 2016, no. 1, 1–23. 

\bibitem{BoDaToVi09}
V.~Bonnaillie-No\"{e}l, M.~Dambrine, S.~Tordeux, and G.~Vial, Interactions between moderately close inclusions for the Laplace equation. 
Math. Models Methods Appl. Sci. 19 (2009), no. 10, 1853–1882.

\bibitem{BrFe20}
O.P.~Bruno and A.G.~Fernandez-Lado, On the evaluation of quasi-periodic Green functions and wave-scattering at and around Rayleigh-Wood anomalies. 
J. Comput. Phys. 410 (2020), 109352, 24 pp. 


\bibitem{BrRe92}
O.P.~Bruno and F.~Reitich,  Solution of a boundary value problem for the Helmholtz equation via variation of the boundary into the complex domain. Proc. Roy. Soc. Edinburgh Sect. A 122 (1992), no. 3-4, 317–340. 


\bibitem{BrShTuVe17}
O.P.~Bruno, S.P.~Shipman, C.~Turc, and S.~Venakides, 
Three-dimensional quasi-periodic shifted Green function throughout the spectrum, including Wood anomalies. 
Proc. A. 473 (2017), no. 2207, 20170242, 18 pp. 

  
  
\bibitem{Ch84}
I.~Chavel, Eigenvalues in Riemannian geometry. Including a chapter by Burton Randol. With an appendix by Jozef Dodziuk. Pure and Applied Mathematics, 115. Academic Press, Inc., Orlando, FL, 1984. 

\bibitem{CoKr13}
D.~Colton and R.~Kress, Integral equation methods in scattering theory. Reprint of the 1983 original. Classics in Applied Mathematics, 72. Society for Industrial and Applied Mathematics (SIAM), Philadelphia, PA, 2013. 


\bibitem{CoDaDaMu17}
M.~Costabel, M.~Dalla Riva, M.~Dauge, and P.~Musolino,  Converging expansions for Lipschitz self-similar perforations of a plane sector. Integral Equations Operator Theory 88 (2017), no. 3, 401–449.

\bibitem{Da13}
M.~Dalla Riva,  Stokes flow in a singularly perturbed exterior domain. Complex Var. Elliptic Equ. 58 (2013), no. 2, 231–257. 

\bibitem{DaLa11}
 M.~Dalla Riva and M.~Lanza de Cristoforis,  Weakly singular and microscopically hypersingular load perturbation for a nonlinear traction boundary value problem: a functional analytic approach. Complex Anal. Oper. Theory 5 (2011), no. 3, 811–833.

\bibitem{DaLaMu21}
M.~Dalla Riva, M.~Lanza de Cristoforis, and P.~Musolino,  Singularly perturbed boundary value problems—a functional analytic approach. Springer, Cham, 2021.

\bibitem{DaLuMu22}
M.~Dalla Riva, P.~Luzzini, and P.~Musolino, Paolo, Shape analyticity and singular perturbations for layer potential operators. ESAIM Math. Model. Numer. Anal. 56 (2022), no. 6, 1889–1910.

\bibitem{DaLuMuPu22}
M.~Dalla Riva, P.~Luzzini, P.~Musolino, and R.~Pukhtaievych, Dependence of effective properties upon regular perturbations. In I. Andrianov, S. Gluzman, V. Mityushev, Editors, Mechanics and Physics of Structured Media. pages 271--301, Elsevier, 2022.


\bibitem{DaMoMu20}
M.~Dalla Riva, R.~Molinarolo, and P.~Musolino, Local uniqueness of the solutions for a singularly perturbed nonlinear nonautonomous transmission problem. Nonlinear Anal. 191 (2020), 111645, 31 pp. 

\bibitem{DaMu16}
M.~Dalla Riva and P.~Musolino,  A mixed problem for the Laplace operator in a domain with moderately close holes. Comm. Partial Differential Equations 41 (2016), no. 5, 812–837.

\bibitem{DaMu17}
M.~Dalla Riva and P.~Musolino, The Dirichlet problem in a planar domain with two moderately close holes. J. Differential Equations 263 (2017), no. 5, 2567–2605.

\bibitem{DaMuPu19}
M.~Dalla Riva, P.~Musolino, and R.~Pukhtaievych,  Series expansion for the effective conductivity of a periodic dilute composite with thermal resistance at the two-phase interface. Asymptot. Anal. 111 (2019), no. 3-4, 217–250. 

\bibitem{DaToVi10}
M.~Dauge, S.~Tordeux, and G.~Vial, Selfsimilar perturbation near a corner: matching versus multiscale expansions for a model problem. Around the research of Vladimir Maz'ya. II, 95–134,
Int. Math. Ser. (N. Y.), 12, Springer, New York, 2010. 

\bibitem{De85}
K.~Deimling, Nonlinear functional analysis. Springer-Verlag, Berlin, 1985.

\bibitem{DiHaHu01}
A.~Dienstfrey, F.~Hang, and J.~Huang, Lattice sums and the two-dimensional, periodic Green's function for the Helmholtz equation. R. Soc. Lond. Proc. Ser. A Math. Phys. Eng. Sci. 457 (2001), no. 2005, 67–85. 

\bibitem{FeAm21}
 F.~Feppon and H.~Ammari, High order topological asymptotics: reconciling layer potentials and compound asymptotic expansions.	Multiscale Model. Simul.,  {20 (2022), no. 3, 957--1001.}


\bibitem{FeAm22}
F.~Feppon and H.~Ammari, Homogenization of sound-absorbing and high-contrast acoustic metamaterials in subcritical regimes. SAM Research Report No. 2021-35.


\bibitem{FeTa15}
F.~Ferraresso and J.~Taskinen,  Singular perturbation Dirichlet problem in a double-periodic perforated plane. Ann. Univ. Ferrara Sez. VII Sci. Mat. 61 (2015), no. 2, 277–290.



\bibitem{Fo99}
G.B.~Folland,  Real analysis. Modern techniques and their applications. Second edition. Pure and Applied Mathematics (New York). A Wiley-Interscience Publication. John Wiley \& Sons, Inc., New York, 1999.


\bibitem{GiTr83}
 D.~Gilbarg and N.S.~Trudinger, Elliptic partial differential equations of second order, 2nd Edition, Vol. 224 of Grundlehren der Mathematischen Wissenschaften [Fundamental Principles of Mathematical Sciences], Springer-Verlag, Berlin, 1983.


\bibitem{Il92}
A.~M. Il'in, Matching of asymptotic expansions of solutions of boundary value problems. Translated from the Russian by V. Minachin [V. V. Minakhin]. Translations of Mathematical Monographs, 102. American Mathematical Society, Providence, RI, 1992. 

\bibitem{Jh55} F.~John, Plane waves and spherical means applied to partial differential equations. Interscience Publishers, New York-London, 1955. 

\bibitem{KoMaMo99}
V.~Kozlov, V.~Maz'ya, and A.~Movchan, Asymptotic analysis of fields in multi-structures.
Oxford Mathematical Monographs. Oxford Science Publications. The Clarendon Press, Oxford University Press, New York, 1999. 

  \bibitem{La02}
M.~Lanza de Cristoforis,  Asymptotic behaviour of the conformal representation of a Jordan domain with a small hole in Schauder spaces. Comput. Methods Funct. Theory 2 (2002), no. 1, [On table of contents: 2003], 1–27.

  \bibitem{La07}
M.~Lanza de Cristoforis,   Asymptotic behavior of the solutions of a nonlinear Robin problem for the Laplace operator in a domain with a small hole: a functional analytic approach. Complex Var. Elliptic Equ. 52 (2007), no. 10-11, 945–977.

\bibitem{La08}
M.~Lanza de Cristoforis,   Asymptotic behavior of the solutions of the Dirichlet problem for the Laplace operator in a domain with a small hole. A functional analytic approach. Analysis (Munich) 28 (2008), no. 1, 63–93.

\bibitem{La10}
M.~Lanza de Cristoforis,   Asymptotic behaviour of the solutions of a non-linear transmission problem for the Laplace operator in a domain with a small hole. A functional analytic approach. Complex Var. Elliptic Equ. 55 (2010), no. 1-3, 269–303.




  \bibitem{LaMu13}
M.~Lanza de Cristoforis and P.~Musolino, A real analyticity result for a nonlinear integral operator. J. Integral Equations Appl. 25 (2013), no. 1, 21–46. 


  \bibitem{LaMu13bis}
M.~Lanza de Cristoforis and P.~Musolino, A singularly perturbed nonlinear Robin problem in a periodically perforated domain: a functional analytic approach. Complex Var. Elliptic Equ. 58 (2013), no. 4, 511–536. 


\bibitem{LaMu18}
M.~Lanza de Cristoforis and P.~Musolino, Analytic dependence of a periodic analog of a fundamental solution upon the periodicity parameters. Ann. Mat. Pura Appl. (4) 197 (2018), no. 4, 1089–1116. 

\bibitem{LaRo08} 
M.~Lanza de Cristoforis and L.~Rossi, Real analytic dependence of simple and double layer potentials for the Helmholtz equation upon perturbation of the support and of the density. Analytic methods of analysis and differential equations: AMADE 2006, 193–220, Camb. Sci. Publ., Cambridge, 2008. 

\bibitem{Li98}
C.M.~Linton,  The Green's function for the two-dimensional Helmholtz equation in periodic domains. J. Engrg. Math. 33 (1998), no. 4, 377–402. 



\bibitem{Lu20}
P.~Luzzini, Regularizing properties of space-periodic layer heat potentials and applications to boundary value problems in periodic domains. Math. Methods Appl. Sci. 43 (2020), no. 8, 5273–5294.

\bibitem{LuMu20}
P.~Luzzini and P.~Musolino, Perturbation analysis of the effective conductivity of a periodic composite. Netw. Heterog. Media 15 (2020), no. 4, 581–603.




\bibitem{MaMoNi13}
V.~Maz'ya, A.~Movchan, and M.~Nieves, Green's kernels and meso-scale approximations in perforated domains. Lecture Notes in Mathematics, 2077. Springer, Heidelberg, 2013.


\bibitem{MaNaPl00a}
V.~Maz'ya, S.~Nazarov, and B.~Plamenevskij, Asymptotic theory of elliptic boundary value problems in singularly perturbed domains. Vol. I. Translated from the German by Georg Heinig and Christian Posthoff. Operator Theory: Advances and Applications, 111. Birkhäuser Verlag, Basel, 2000.

\bibitem{MaNaPl00b}
V.~Maz'ya, S.~Nazarov, and B.~Plamenevskij, Asymptotic theory of elliptic boundary value problems in singularly perturbed domains. Vol. II. Translated from the German by Plamenevskij. Operator Theory: Advances and Applications, 112. Birkhäuser Verlag, Basel, 2000. 


\bibitem{Mu12}
P.~Musolino, A singularly perturbed Dirichlet problem for the Laplace operator in a periodically perforated domain. A functional analytic approach. Math. Methods Appl. Sci. 35 (2012), no. 3, 334–349.

\bibitem{Mu12bis}
P.~Musolino, Singular perturbation and homogenization problems in a periodically perforated domain. A functional analytic approach. Ph.D.~Dissertation, University of Padova, 2012. \url{http://hdl.handle.net/11577/3422452}

\bibitem{NaRuTa12}
S.A.~Nazarov, K.~Ruotsalainen, and  J.~Taskinen, 
Spectral gaps in the Dirichlet and Neumann problems on the plane perforated by a double-periodic family of circular holes. Problems in mathematical analysis. No. 63.
J. Math. Sci. (N.Y.) 181 (2012), no. 2, 164–222.

\bibitem{NoSo13}
A.~A. Novotny and J.~Soko{\l}owski, Topological derivatives in shape optimization. Interaction of Mechanics and Mathematics. Springer, Heidelberg, 2013.

\bibitem{PeShTuVe19}
C.~Pérez-Arancibia, S.P.~Shipman, C.~Turc, and S.~Venakides, 
Domain decomposition for quasi-periodic scattering by layered media via robust boundary-integral equations at all frequencies. 
Commun. Comput. Phys. 26 (2019), no. 1, 265–310.

\bibitem{PoBoMcMo99}
C.G.~Poulton, L.C.~Botten, R.C.~McPhedran, and  A.B.~Movchan,  Source-neutral Green's functions for periodic problems in electrostatics, and their equivalents in electromagnetism. R. Soc. Lond. Proc. Ser. A Math. Phys. Eng. Sci. 455 (1999), no. 1983, 1107–1123. 

\bibitem{RaTa75}
J.~Rauch and M.~Taylor,  Potential and scattering theory on wildly perturbed domains. J. Functional Analysis 18 (1975), 27–59. 

\bibitem{Sc65}
L.~Schwartz, Méthodes mathématiques pour les sciences physiques.  Enseignement des Sciences Hermann, Paris 1961

\bibitem{Va88}
T.~Valent,  Boundary value problems of finite elasticity. Local theorems on existence, uniqueness, and analytic dependence on data. Springer Tracts in Natural Philosophy, 31. Springer-Verlag, New York, 1988. 

\end{thebibliography}

\end{document}